\newcommand{\C}{{\mathbb C}}
\newcommand{\Q}{{\mathbb Q}}
\newcommand{\R}{{\mathbb R}}
\newcommand{\Z}{{\mathbb Z}}
\newcommand{\dd}{{{\mathrm{d}}}}
\renewcommand{\to}[1][]{\xrightarrow{\ #1\ }}
\newcommand{\cp}{\mathbb C \mathrm P}
\newcommand{\rp}{\mathbb R \mathrm P}
\newcommand*{\da@rightarrow}{\mathchar"0\hexnumber@\symAMSa 4B }
\newcommand*{\da@leftarrow}{\mathchar"0\hexnumber@\symAMSa 4C }
\newcommand*{\xdashrightarrow}[2][]{%
  \mathrel{%
    \mathpalette{\da@xarrow{#1}{#2}{}\da@rightarrow{\,}{}}{}%
  }%
}
\newcommand{\xdashleftarrow}[2][]{%
  \mathrel{%
    \mathpalette{\da@xarrow{#1}{#2}\da@leftarrow{}{}{\,}}{}%
  }%
}
\newcommand*{\da@xarrow}[7]{%
  \sbox0{$\ifx#7\scriptstyle\scriptscriptstyle\else\scriptstyle\fi#5#1#6\m@th$}%
  \sbox2{$\ifx#7\scriptstyle\scriptscriptstyle\else\scriptstyle\fi#5#2#6\m@th$}%
  \sbox4{$#7\dabar@\m@th$}%
  \dimen@=\wd0 %
  \ifdim\wd2 >\dimen@
    \dimen@=\wd2 %
  \fi
  \count@=2 %
  \def\da@bars{\dabar@\dabar@}%
  \@whiledim\count@\wd4<\dimen@\do{%
    \advance\count@\@ne
    \expandafter\def\expandafter\da@bars\expandafter{%
      \da@bars
      \dabar@ 
    }%
  }%
  \mathrel{#3}%
  \mathrel{%
    \mathop{\da@bars}\limits
    \ifx\\#1\\%
    \else
      _{\copy0}%
    \fi
    \ifx\\#2\\%
    \else
      ^{\copy2}%
    \fi
  }%
  \mathrel{#4}%
}
\newsavebox\myboxA
\newsavebox\myboxB
\newlength\mylenA
\newcommand*\xtilde[2][0.8]{%
    \sbox{\myboxA}{$\m@th#2$}%
    \setbox\myboxB\null
    \ht\myboxB=\ht\myboxA%
    \dp\myboxB=\dp\myboxA%
    \wd\myboxB=#1\wd\myboxA
    \sbox\myboxB{$\m@th\widetilde{\copy\myboxB}$}
    \setlength\mylenA{\the\wd\myboxA}
    \addtolength\mylenA{-\the\wd\myboxB}%
    \ifdim\wd\myboxB<\wd\myboxA%
       \rlap{\hskip 0.5\mylenA\usebox\myboxB}{\usebox\myboxA}%
    \else
        \hskip -0.5\mylenA\rlap{\usebox\myboxA}{\hskip 0.5\mylenA\usebox\myboxB}%
    \fi}
\newbox\usefulbox
\def\getslant #1{\strip@pt\fontdimen1 #1}
\def\xxtilde #1{\mathchoice
 {{\setbox\usefulbox=\hbox{$\m@th\displaystyle #1$}%
    \dimen@ \getslant\the\textfont\symletters \ht\usefulbox
    \divide\dimen@ \tw@ 
    \kern\dimen@ 
    \xtilde{\kern-\dimen@ \box\usefulbox\kern\dimen@ }\kern-\dimen@ }}
 {{\setbox\usefulbox=\hbox{$\m@th\textstyle #1$}%
    \dimen@ \getslant\the\textfont\symletters \ht\usefulbox
    \divide\dimen@ \tw@ 
    \kern\dimen@ 
    \xtilde{\kern-\dimen@ \box\usefulbox\kern\dimen@ }\kern-\dimen@ }}
 {{\setbox\usefulbox=\hbox{$\m@th\scriptstyle #1$}%
    \dimen@ \getslant\the\scriptfont\symletters \ht\usefulbox
    \divide\dimen@ \tw@ 
    \kern\dimen@ 
    \xtilde{\kern-\dimen@ \box\usefulbox\kern\dimen@ }\kern-\dimen@ }}
 {{\setbox\usefulbox=\hbox{$\m@th\scriptscriptstyle #1$}%
    \dimen@ \getslant\the\scriptscriptfont\symletters \ht\usefulbox
    \divide\dimen@ \tw@ 
    \kern\dimen@ 
    \xtilde{\kern-\dimen@ \box\usefulbox\kern\dimen@ }\kern-\dimen@ }}%
 {}}
\newcommand*\xoverline[2][0.75]{%
    \sbox{\myboxA}{$\m@th#2$}%
    \setbox\myboxB\null
    \ht\myboxB=\ht\myboxA%
    \dp\myboxB=\dp\myboxA%
    \wd\myboxB=#1\wd\myboxA
    \sbox\myboxB{$\m@th\overline{\copy\myboxB}$}
    \setlength\mylenA{\the\wd\myboxA}
    \addtolength\mylenA{-\the\wd\myboxB}%
    \ifdim\wd\myboxB<\wd\myboxA%
       \rlap{\hskip 0.5\mylenA\usebox\myboxB}{\usebox\myboxA}%
    \else
        \hskip -0.5\mylenA\rlap{\usebox\myboxA}{\hskip 0.5\mylenA\usebox\myboxB}%
    \fi}
\def\xxoverline #1{\mathchoice
 {{\setbox\usefulbox=\hbox{$\m@th\displaystyle #1$}%
    \dimen@ \getslant\the\textfont\symletters \ht\usefulbox
    \divide\dimen@ \tw@ 
    \kern\dimen@ 
    \overline{\kern-\dimen@ \box\usefulbox\kern\dimen@ }\kern-\dimen@ }}
 {{\setbox\usefulbox=\hbox{$\m@th\textstyle #1$}%
    \dimen@ \getslant\the\textfont\symletters \ht\usefulbox
    \divide\dimen@ \tw@ 
    \kern\dimen@ 
    \xoverline{\kern-\dimen@ \box\usefulbox\kern\dimen@ }\kern-\dimen@ }}
 {{\setbox\usefulbox=\hbox{$\m@th\scriptstyle #1$}%
    \dimen@ \getslant\the\scriptfont\symletters \ht\usefulbox
    \divide\dimen@ \tw@ 
    \kern\dimen@ 
    \xoverline{\kern-\dimen@ \box\usefulbox\kern\dimen@ }\kern-\dimen@ }}
 {{\setbox\usefulbox=\hbox{$\m@th\scriptscriptstyle #1$}%
    \dimen@ \getslant\the\scriptscriptfont\symletters \ht\usefulbox
    \divide\dimen@ \tw@ 
    \kern\dimen@ 
    \xoverline{\kern-\dimen@ \box\usefulbox\kern\dimen@ }\kern-\dimen@ }}%
 {}}
\newcommand{\mylabel}[2]{#2\def\@currentlabel{#2}\label{#1}}
\newcommand{\Mac}{}
\DeclareRobustCommand{\Mac}{%
  M%
  \raisebox{\dimexpr\fontcharht\font`M-\height}{%
    \check@mathfonts\fontsize{\sf@size}{0}\selectfont
    c%
  }%
}
\newtheoremstyle{citing}
  {}
  {}
  {\itshape}
  {}
  {\bfseries}
  {\textbf{.}}
  {.5em}
  {\thmnote{#3}}
\theoremstyle{plain}
\newtheorem{theorem}{Theorem}
\newtheorem{lemma}[theorem]{Lemma}
\newtheorem{corollary}[theorem]{Corollary}
\newtheorem{proposition}[theorem]{Proposition}
\theoremstyle{remark}
\newtheorem{example}[theorem]{Example}
\theoremstyle{definition}
\numberwithin{equation}{section}
\theoremstyle{remark}
\theoremstyle{citing}
\theoremstyle{definition}
\title[Maximal Weinstein neighborhoods of symmetric R-spaces and their symplectic capacities]{Maximal Weinstein neighborhoods of symmetric R-spaces and their symplectic capacities}
\author{Johanna Bimmermann}
\address{Fakultät für Mathematik, Ruhr-Universität Bochum, Universitätsstrasse 150, 44801 Bochum, Germany}
\email{johanna.bimmermann@rub.de}
\let\origmaketitle\maketitle
\def\maketitle{
  \begingroup
  \def\uppercasenonmath##1{} 
  \let\MakeUppercase\relax 
  \origmaketitle
  \endgroup
}
\begin{document}
\thispagestyle{empty}

\begin{abstract}
Symmetric \( R \)-spaces can be characterized as real forms of Hermitian symmetric spaces, and as such, they are all embedded as Lagrangian submanifolds. We show that their maximal Weinstein tubular neighborhoods are dense and use this property to compute both the Gromov width and the Hofer--Zehnder capacity of the corresponding disc (co)tangent bundles of the symmetric \( R \)-spaces.
\end{abstract}

\maketitle

\setlength{\parindent}{1em}
\setcounter{tocdepth}{1}



\section{Introduction}\label{symR}

Symmetric \( R \)-spaces \( N \) are a special class of real flag manifolds that also possess the structure of compact-type symmetric spaces \( (N, g) \). More precisely, fix a semisimple (non-compact) Lie group \( G \) and a parabolic subgroup \( P \). Then, the (compact) coset space \( N = G / P \) is called a real flag manifold. If the action of the maximal compact subgroup \( K \subset G \) is transitive, we obtain a \( K \)-invariant metric on \( N \). If the pair \( (K, H) \), where \( H = K \cap P \), forms a symmetric pair, then \( N \) is called a symmetric \( R \)-space. These spaces have been classified (see Appendix~\ref{classification}), and the list includes many notable examples: 
$$
\mathrm{Gr}_{\mathbb{R}} (p,q), \mathrm{Gr}_{\mathbb{C}} (p,q), \mathrm{Gr}_{\mathbb{H}}(p,q), \mathrm{SO}(n), \mathrm{U}(n), \mathrm{Sp}(n), Q_{p,q}(\mathbb{R}), Q_n(\mathbb{C}), \mathbb{O}\mathrm{P}^2, \ldots
$$
\subsection*{Complexification}

The dual description of \( N \) as a homogeneous space,
\[
N \cong K / H \cong G / P,
\]
gives rise to two natural complexifications: \( K^{\mathbb{C}} / H^{\mathbb{C}} \) and \( G^{\mathbb{C}} / P^{\mathbb{C}} \). These two complexifications, however, are not the same. The space \( K^{\mathbb{C}} / H^{\mathbb{C}} \) is \( K \)-equivariantly biholomorphic to the tangent bundle \( TN \), equipped with an adapted complex structure (see~\cite[Thm.~2.1]{Tum23}\footnote{The identification in~\cite[Thm.~2.1]{Tum23} is a diffeomorphism; we include a sketch of the proof in Appendix~\ref{biholo} explaining the biholomorphism part.}). In contrast, \( N_{\mathbb{C}} := G^{\mathbb{C}} / P^{\mathbb{C}} \) is a Hermitian symmetric space of compact type (see~\cite{Tak84}).

\subsection*{Holomorphic embedding}The inclusion \( K^{\mathbb{C}} \subset G^{\mathbb{C}} \) induces a holomorphic embedding \( TN \hookrightarrow N_{\mathbb{C}} \) as an open dense \( K^{\mathbb{C}} \)-orbit. Both \( TN \) and \( N_{\mathbb{C}} \) admit invariant symplectic structures: \( \mathrm{d}\lambda \) (the pullback of the canonical symplectic form on \( T^*N \) via the metric \( g \)) and \( \omega_{\mathrm{KKS}} \) (the Kirillov--Kostant--Souriau form, arising from the realization of \( N_{\mathbb{C}} \) as a coadjoint orbit). These symplectic forms, together with their respective complex structures, define Kähler structures. However, the embedding \( TN \hookrightarrow N_{\mathbb{C}} \) cannot be Kähler, as can already be seen from volume considerations:
\[
\mathrm{vol}(TN) = \infty > \mathrm{vol}(N_{\mathbb{C}}).
\]
This holomorphic embedding nevertheless shows that the tangent bundle of a symmetric \( R \)-space is uniruled; that is, there exists a (pseudo-)holomorphic curve through every point. Note that adapted complex structures are compatible with the symplectic form \( \mathrm{d}\lambda \), but the holomorphic curves in \( TN \) obtained in this way will have infinite energy. In symplectic topology, (pseudo-)holomorphic curves play a prominent role, and finite energy curves are generally better behaved than infinite energy ones. This is one of the motivations for seeking an open-dense symplectic embedding of a fiberwise convex neighborhood of the zero section of \( (TN, \mathrm{d}\lambda) \) into \( (N_{\mathbb{C}}, \omega_{\mathrm{KKS}}) \). Stretching the neck along the boundary of this fiberwise convex neighborhood could then provide finite-energy foliations of the tangent bundle \( TN \). In the cases \( N = \mathbb{S}^n, \mathbb{RP}^n, \mathbb{CP}^n \), or for Hermitian symmetric spaces of compact type, such symplectic embeddings were constructed explicitly (case by case) in~\cite{Ad22, Bim24.2, B24}, and used to compute symplectic capacities of disc tangent bundles \( (DTN, \mathrm{d}\lambda) \).

\subsection*{Symplectic embedding}In this article, we provide a systematic construction of such symplectic embeddings for all symmetric \( R \)-spaces. To state our main result, we introduce some notation. Let \( \mathfrak{k} = \mathfrak{h} \oplus \mathfrak{l} \) be the Cartan decomposition associated with the symmetric pair \( (K, H) \), and let \( \mathfrak{a} \subset \mathfrak{l} \) be a maximal abelian subalgebra. Note that \( \dim(\mathfrak{a}) =: \mathrm{rk}(N) \). These integrate into maximal flats, which can be thought of as immersed tori. Let \( \Sigma \) be the restricted root system of \( \mathfrak{k} \) with respect to \( \mathfrak{a}\subset \mathfrak{l} \), and define
\[
\Box_r := \big\{ X \in \mathfrak{a} \mid |\alpha(X)| < r \text{ for all } \alpha \in \Sigma \big\}.
\]
We equivariantly associate a fiberwise convex neighborhood of the zero section by
\[
U_r N := \big\{ (k, X) \mid X \in \Box_r \big\} \subset TN \cong K \times_H \mathfrak{l}.
\]
The set \( U_r N \subset TN \) is open, as \( H \) acts transitively on maximal flats. In the non-compact setting, such neighborhoods of the zero section are often referred to as Grauert domains (see, for example,~\cite{BHH03,AG90}), and for certain value of \( r \), they yield the largest domain on which the adapted invariant complex structure is defined.

\begin{theorem}[Corollary~\ref{symplectomorphism}]\label{thmintro}
For \( r \leq \frac{\mathrm{rk}(N_{\mathbb{C}})}{\mathrm{rk}(N)} \), there exists a \( K \)-equivariant symplectic embedding
\[
(U_r N, \mathrm{d}\lambda) \hookrightarrow (N_{\mathbb{C}}, \omega_{\mathrm{KKS}}),
\]
which is open-dense for \( r = \frac{\mathrm{rk}(N_{\mathbb{C}})}{\mathrm{rk}(N)} \).
\end{theorem}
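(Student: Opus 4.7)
The plan is to realise the desired embedding as a $K$-equivariant modification of the open holomorphic embedding $TN\cong K^{\C}/H^{\C}\into G^{\C}/P^{\C}=N_{\C}$. Both $d\lambda$ and $\omega_{\mathrm{KKS}}$ are $K$-invariant Kähler forms compatible with the adapted complex structure, and $H$ acts transitively on maximal flats, so the entire embedding is determined by what it does on the slice $\{[e,X]:X\in\mathfrak{a}\}\subset TN$. I would therefore look for a smooth, $W$-equivariant map $\psi:\Box_r\to \mathfrak{a}$ and define
\[
\Phi:U_rN\longrightarrow N_{\C},\qquad [k,X]\longmapsto k\cdot\exp\!\bigl(i\,\psi(X)\bigr)\cdot o,
\]
with $o\in N_{\C}$ the basepoint fixed by $H^{\C}$. $K$-equivariance of $\Phi$ is automatic by construction; the real content is the symplectic identity $\Phi^{*}\omega_{\mathrm{KKS}}=d\lambda$.

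\textbf{Key steps.} First I would use the restricted root space decomposition of $\mathfrak{k}$ with respect to $\mathfrak{a}$ to diagonalise both forms on the slice: each positive restricted root $\alpha\in\Sigma^{+}$ contributes to the tangent space at $[e,X]$ a block on which $d\lambda$ takes an explicit form depending only on $\alpha(X)$, and on which $\Phi^{*}\omega_{\mathrm{KKS}}$ takes an explicit form depending on $\alpha(\psi(X))$ through a hyperbolic factor coming from the Hermitian symmetric structure of $N_{\C}$. Matching blocks reduces the symplectic condition to a decoupled system of separable ODEs, one per positive restricted root, together with a gradient-type relation along $\mathfrak{a}$ itself. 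Next I would solve these ODEs in closed form and verify that the solution is a diffeomorphism of $\Box_r$ onto its image in $\mathfrak{a}$ for every $r\le \mathrm{rk}(N_{\C})/\mathrm{rk}(N)$, and that at the extremal value the image of $\psi$ is exactly the maximal $H$-slice of the open dense orbit $K^{\C}\cdot o\subset N_{\C}$. Spreading by the $K$-action then gives the density statement.

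\textbf{Main obstacle.} The hard part will be the uniform identification of the two sets of root data appearing on the $\exp(i\mathfrak{a})$-slice: the restricted roots $\Sigma$ of the real symmetric pair $(\mathfrak{k},\mathfrak{h})$ on one side, and the strongly orthogonal roots of the Hermitian symmetric structure on $N_{\C}$ on the other. Showing that the sharp constant $\mathrm{rk}(N_{\C})/\mathrm{rk}(N)$ is exactly the ratio of their natural scales, and that the resulting ODE system is always solvable on the predicted range, is what forces the bound and avoids the case-by-case analysis of Appendix~\ref{classification}. A plausible route is to encode both root systems in the Jordan triple system of $N_{\C}$; the Peirce decomposition should produce the required matching and the hyperbolic factor in a single uniform computation, which is presumably what the referenced Corollary~\ref{symplectomorphism} packages together.
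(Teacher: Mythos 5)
Your route is genuinely different from the paper's. The paper never writes down an explicit fiberwise map: it constructs a $K$-invariant primitive $\eta$ of $\omega_{\mathrm{KKS}}$ on $N_{\mathbb{C}}\setminus\Delta$ that vanishes along $N$ (Section~\ref{liouville}), shows that the associated Liouville vector field is sent by the momentum map to the Euler vector field of $\mathfrak{k}$ (Lemma~\ref{Euler}), and then spreads a small Weinstein neighborhood by the two Liouville flows, following \cite[Prop.~2.8]{KT05}, to obtain a $K$-equivariant symplectomorphism of completions intertwining the momentum maps (Theorem~\ref{symplectic completions}); Corollary~\ref{symplectomorphism} then follows purely from comparing momentum map images (Lemma~\ref{momentum images} and Section~\ref{momentum}), where the cut-locus computation on the $\tau$-invariant maximal polysphere is what produces the constant $\mathrm{rk}(N_{\mathbb{C}})/\mathrm{rk}(N)$. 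Your plan is instead the explicit-ansatz route that \cite{Ad22,Bim24.2,B24} carried out case by case, and which this paper was designed to circumvent.

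The genuine gap is the claimed reduction to ``a decoupled system of separable ODEs, one per positive restricted root.'' Your ansatz has only $\mathrm{rk}(N)$ functional degrees of freedom (a Weyl-equivariant map $\psi\colon\Box_r\to\mathfrak{a}$), while the identity $\Phi^*\omega_{\mathrm{KKS}}=\mathrm{d}\lambda$ imposes a matching condition for every restricted root of $(\mathfrak{k},\mathfrak{a}\subset\mathfrak{l})$ (with multiplicities), conditions in the flat directions, and the vanishing of the horizontal--horizontal components (for $\mathrm{d}\lambda$ these vanish in the connection splitting, whereas the restriction of $\omega_{\mathrm{KKS}}$ to the $K$-orbit through $\exp(i\psi(X))\cdot o$ does not obviously vanish). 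Since all of these constraints are coupled through the single map $\psi$ --- you need $\alpha(\psi(X))$ to be a prescribed function of $\alpha(X)$ simultaneously for all $\alpha\in\Sigma$ --- the system is overdetermined as soon as the number of positive restricted roots exceeds $\mathrm{rk}(N)$, and consistency is exactly the uniform statement that must be proved rather than assumed; it is not delivered by solving rank-many ODEs. Likewise, your density claim at the extremal $r$ presupposes the identification of $N_{\mathbb{C}}\setminus K^{\mathbb{C}}\cdot o$ with $\Delta$ and the normalization giving the threshold $\mathrm{rk}(N_{\mathbb{C}})/\mathrm{rk}(N)$; in the paper this requires the analysis of $\tau$ on the maximal polysphere (Section~\ref{orbits at infinity}, Lemma~\ref{momentum images}), distinguishing the cases $\mathrm{rk}(N_{\mathbb{C}})=\mathrm{rk}(N)$ and $\mathrm{rk}(N_{\mathbb{C}})=2\cdot\mathrm{rk}(N)$. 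A Jordan-triple/Peirce-decomposition computation might well close these gaps uniformly, but as written the proposal assumes the key matching rather than establishing it.
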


A similar theorem was proved by Torres in~\cite[Thm.~1.1]{Tor13} in the broader setting of coadjoint orbits. However, in~\cite[Thm.~1.1]{Tor13}, the neighborhood of the zero section identified with the open-dense orbit is not explicit. The explicit characterization of the neighborhood \( U_r N \) is essential for computing capacities.

\subsection*{Capacities} We compute both the Gromov width and the Hofer--Zehnder capacity of the domains $(U_rN,\dd\lambda)$. Notably, as explained in Section~\ref{orbits at infinity}, either \( \mathrm{rk}(N_{\mathbb{C}}) = \mathrm{rk}(N) \) or \( \mathrm{rk}(N_{\mathbb{C}}) = 2 \cdot \mathrm{rk}(N) \) and \( \mathrm{rk}(N_{\mathbb{C}}) = 2 \cdot \mathrm{rk}(N) \) if and only if $N$ is simply connected.

\begin{theorem}
Let \( \mathrm{sys} \) denote the length of the shortest closed geodesic of the symmetric \( R \)-space \( (N, g) \). Then
\[
c_G(U_1 N, \mathrm{d}\lambda) = c_{HZ}(U_1 N, \mathrm{d}\lambda) =
\begin{cases}
\mathrm{sys}, & \text{if } \mathrm{rk}(N_{\mathbb{C}}) = 2 \cdot \mathrm{rk}(N), \\
2 \cdot \mathrm{sys}, & \text{if } \mathrm{rk}(N_{\mathbb{C}}) = \mathrm{rk}(N).
\end{cases}
\]
Moreover, if \( \mathrm{rk}(N) = 1 \), that is, if \( N \in \{ \mathbb{S}^n, \rp^n, \cp^n, \mathbb{H}\mathrm{P}^n, \mathbb{O}\mathrm{P}^2 \} \), then \( U_r N = D_r N \).
\end{theorem}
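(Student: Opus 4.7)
The strategy is a sandwich: establish separately the upper bound $c_{HZ}(U_1 N, \dd\lambda) \leq A$ and the lower bound $c_G(U_1 N, \dd\lambda) \geq A$, where $A \in \{\mathrm{sys}(N),\, 2\mathrm{sys}(N)\}$ is the claimed value. Equality of the two capacities then follows from the general inequality $c_G \leq c_{HZ}$.

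\textbf{Upper bound.} Theorem~\ref{thmintro} at $r = 1 \leq \mathrm{rk}(N_\C)/\mathrm{rk}(N)$ realizes $(U_1 N, \dd\lambda)$ as an open subset of the closed Hermitian symmetric space $(N_\C, \omega_{\mathrm{KKS}})$. Since $N_\C$ is a Fano variety, it is uniruled with a distinguished minimal rational curve class $[\ell] \in H_2(N_\C; \Z)$, and this class carries a nonvanishing point Gromov--Witten invariant (a classical fact about projective homogeneous spaces). By the theorem of Hofer--Viterbo, extended by Lu and Liu--Tian, for every open $U \subset N_\C$,
\[
c_{HZ}(U, \omega_{\mathrm{KKS}}) \leq \omega_{\mathrm{KKS}}([\ell]).
\]
Applied to $U = U_1 N$, this reduces the upper bound to the Lie-theoretic identification $\omega_{\mathrm{KKS}}([\ell]) = A$: the left side is the symplectic area of the shortest holomorphic $\C\mathrm{P}^1$ in $N_\C$, computable from the highest root of $\mathfrak{g}^\C$ and the weight realizing $N_\C$ as a coadjoint orbit, while $\mathrm{sys}(N)$ is the length of the shortest vector in the period lattice $\Gamma \subset \mathfrak{a}$ of the geodesic flow on a maximal flat. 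The factor-of-two dichotomy corresponds precisely to the deck-transformation halving of a minimal rational curve when $N$ is not simply connected, i.e.\ when $\mathrm{rk}(N_\C) = \mathrm{rk}(N)$.

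\textbf{Lower bound.} I construct an explicit symplectic embedding of a standard ball of capacity $A$ into $U_1 N$. Choose a shortest closed geodesic $\gamma$ of length $\mathrm{sys}(N)$ sitting inside a maximal flat of $N$, with velocity $v \in \mathfrak{a}$. A cotangent vector $tv \in \mathfrak{a}$ along $\gamma$ lies in $\Box_1$ if and only if $|t|\cdot \max_{\alpha \in \Sigma}|\alpha(v)| < 1$, so $U_1 N$ contains an open symplectic annulus along $\gamma$ of area $2\,\mathrm{sys}(N)/\max_{\alpha}|\alpha(v)|$. Combining this with a Darboux-type Weinstein neighborhood in the $2(n-1)$ directions transverse to $\gamma$ produces an open polydisc, and a standard ball of capacity equal to the annulus area embeds. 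Evaluating $\max_{\alpha}|\alpha(v)|$ on the shortest period vector $v \in \Gamma$ yields $1$ or $2$ in the two rank cases, recovering $A$.

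\textbf{Rank-one case and main obstacle.} When $\mathrm{rk}(N) = 1$, $\mathfrak{a} \cong \R$ carries (up to sign and multiples) a single restricted root $\alpha$, so $\Box_r$ is a symmetric interval; since $H$ acts transitively on the unit sphere of $\mathfrak{l}$ by the rank-one hypothesis, its $H$-orbit in $\mathfrak{l}$ is a metric ball, giving $U_r N = D_r N$ once the metric normalization is matched to $\alpha$. The most delicate piece of the full argument is the Lie-theoretic identification $\omega_{\mathrm{KKS}}([\ell]) = A$ and the matching of the lower-bound computation with the same constant: this requires tracking compatible normalizations between the metric on $N$ (via the Killing form on $\mathfrak{k}$), the period lattice $\Gamma$, the restricted root system $\Sigma$, and the KKS form on $N_\C$, and in particular pinning down the factor-of-two dichotomy tied to simple connectivity, equivalently to $\mathrm{rk}(N_\C) = 2\mathrm{rk}(N)$ versus $\mathrm{rk}(N_\C) = \mathrm{rk}(N)$.
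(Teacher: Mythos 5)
Your upper bound rests on an inequality that does not exist: there is no theorem of Hofer--Viterbo, Liu--Tian or Lu asserting that $c_{HZ}(U,\omega_{\mathrm{KKS}})\leq\omega_{\mathrm{KKS}}([\ell])$ for \emph{every} open $U\subset N_\C$ once a one-point Gromov--Witten invariant of the minimal class is nonzero. As stated it is false: applied to $U=N_\C$ it would give $c_{HZ}(N_\C,\omega_{\mathrm{KKS}})\leq 4\pi$, contradicting $c_{HZ}(N_\C,\omega_{\mathrm{KKS}})=4\pi\cdot\mathrm{rk}(N_\C)$ (Theorem~\ref{capacities HSS}) whenever $\mathrm{rk}(N_\C)>1$. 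What is actually available is Lu's bound for the complement of a cycle that is \emph{inserted into the invariant}: $\mathrm{GW}_{A,0,m+2}([pt],\ldots,[\Sigma])\neq 0$ implies $c_{HZ}(M\setminus\Sigma,\omega)\leq\omega(A)$. To use it you must first identify $U_{r_{\max}}N$, $r_{\max}=\mathrm{rk}(N_\C)/\mathrm{rk}(N)$, with $N_\C\setminus\Delta$ (Corollary~\ref{symplectomorphism}) --- the density statement your proposal never invokes --- and then prove $A\cdot[\Delta]\neq 0$, which the paper obtains from exactness of $\omega_{\mathrm{KKS}}$ on $N_\C\setminus\Delta$ and Stokes; the value for $U_1N$ then follows from the linear scaling $c(U_rN,\dd\lambda)=r\,c(U_1N,\dd\lambda)$, not from your unproven evaluation of $\max_\alpha|\alpha(v)|$ on a shortest period vector.

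The lower bound construction also fails. A Weinstein neighborhood of a closed geodesic gives at best a product of an annulus of area $2\,\mathrm{sys}/\max_\alpha|\alpha(v)|$ with a polydisc of some \emph{small} transverse width $\epsilon$; since the annulus embeds into a disc of essentially the same area, this product sits in a polydisc whose Gromov width is $\epsilon$, so no ball of capacity $\mathrm{sys}$ or $2\,\mathrm{sys}$ embeds into a thin neighborhood of a single geodesic --- nonsqueezing blocks exactly the step ``a standard ball of capacity equal to the annulus area embeds.'' The extremal balls are global objects: the paper takes the sublevel set $\lbrace H_\xi<4\pi\rbrace$ of the circle-action Hamiltonian, which is symplectomorphic to a ball of capacity $4\pi$ because the minimum is an isolated critical point and the next critical value is $4\pi$ (Lemmas~\ref{isolated minimum} and~\ref{critical values}, together with \cite{KT05}), then pushes $\Delta$ off a slightly smaller ball by the holomorphic gradient flow of $H_\xi$ and applies a Moser trick relative to $\Delta$ to land that ball inside $N_\C\setminus\Delta\cong U_{r_{\max}}N$. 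Your rank-one remark ($\Box_r$ an interval, $H$ transitive on the unit sphere of $\mathfrak{l}$, hence $U_rN=D_rN$ after matching normalizations) is essentially correct, but both capacity bounds as you propose them do not go through, and the missing ingredient in each case is precisely the open-dense symplectic identification of $U_{r_{\max}}N$ with $N_\C\setminus\Delta$.
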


This theorem generalizes~\cite[Thm.~A]{Bim24.2}, where the Hofer--Zehnder capacity of \( (D_1 N, \mathrm{d}\lambda) \) was computed when \( N \in \{ \mathbb{RP}^n, \mathbb{CP}^n \} \). Note that both spaces \( \mathbb{RP}^n \) and \( \mathbb{CP}^n \) are symmetric \( R \)-spaces, and their complexifications are \( \mathbb{CP}^n \) and \( \mathbb{CP}^n \times \mathbb{CP}^n \), respectively. Observe that 
$$
\mathrm{rk}(\mathbb{CP}^n) = 1 = \mathrm{rk}(\mathbb{RP}^n) \quad \text{and}\quad \mathrm{rk}(\mathbb{CP}^n \times \mathbb{CP}^n) = 2 = 2 \cdot \mathrm{rk}(\mathbb{CP}^n).
$$

\subsection*{Disc tangent bundles}
Disc bundles \( D_r N := \big\{ (x, v) \mid |v|_x < r \big\} \) are of particular interest, as they arise as sublevel sets of the kinetic Hamiltonian \( E(x, v) = \frac{1}{2} |v|_x^2 \), which generates the geodesic flow, revealing a deep connection to Riemannian geometry. For simply connected \( N \), we obtain the following corollary:

\begin{corollary}\label{HZ1}
If \( N \) is simply connected,\footnote{\( N \) is simply connected if and only if \( \mathrm{rk}(N_{\mathbb{C}}) = 2 \cdot \mathrm{rk}(N) \); see Appendix~\ref{classification}.} then
$
c_{HZ}(D_1 N, \mathrm{d}\lambda) = \mathrm{sys}.
$
\end{corollary}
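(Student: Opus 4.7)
The strategy is to derive the corollary from the preceding theorem by comparing the disc bundle $D_1 N$ with the polytope neighborhood $U_1 N$. Since $N$ is simply connected, $\mathrm{rk}(N_\C) = 2 \cdot \mathrm{rk}(N)$ by Appendix~\ref{classification}, so the preceding theorem already gives $c_{HZ}(U_1 N, \dd\lambda) = \mathrm{sys}$. When $\mathrm{rk}(N) = 1$ the theorem additionally states $U_1 N = D_1 N$, and the corollary is immediate. In higher rank, $U_1 N$ and $D_1 N$ are in general incomparable subsets of $TN$ --- the metric ball and the root polytope in $\mathfrak{a}$ have different shapes --- so the two inequalities in the desired equality have to be argued separately.

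For the lower bound $c_{HZ}(D_1 N, \dd\lambda) \geq \mathrm{sys}$ I would use the classical admissibility construction on a disc cotangent bundle: take an autonomous Hamiltonian of the form $H(x,v) = f(\tfrac{1}{2}|v|_x^2)$, with $f$ smooth, nondecreasing, vanishing near $0$, and constant near $\tfrac{1}{2}$. The Hamiltonian flow of $H$ is a reparametrization of the geodesic flow, so its non-constant periodic orbits project to closed geodesics of $(N,g)$. Since $N$ is simply connected, every closed geodesic has length at least $\mathrm{sys}$, and an elementary estimate shows that the admissibility condition (no closed orbit of period $\leq 1$) still permits $\mathrm{osc}(H)$ to be taken arbitrarily close to $\mathrm{sys}$.

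For the upper bound $c_{HZ}(D_1 N, \dd\lambda) \leq \mathrm{sys}$ I would rely on the symplectic embedding of Theorem~\ref{thmintro}. In the simply connected case $\mathrm{rk}(N_\C)/\mathrm{rk}(N) = 2$, so an open-dense symplectic embedding $(U_2 N, \dd\lambda) \hookrightarrow (N_\C, \omega_{\mathrm{KKS}})$ is available. A Cauchy--Schwarz estimate on restricted roots, using the metric normalization dictated by $\mathrm{sys}$, yields the inclusion $D_1 N \subset U_2 N$. Hence $D_1 N$ embeds symplectically into the closed Hermitian symmetric space $(N_\C, \omega_{\mathrm{KKS}})$, and monotonicity of the Hofer--Zehnder capacity reduces the upper bound to $c_{HZ}(N_\C, \omega_{\mathrm{KKS}}) \leq \mathrm{sys}$. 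The latter is controlled by the minimal $\omega_{\mathrm{KKS}}$-area of a holomorphic rational curve in $N_\C$, a quantity which has already been identified with $\mathrm{sys}$ in the proof of the preceding theorem.

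The principal obstacle is the inclusion $D_1 N \subset U_2 N$, because it is at this point that the simply connected hypothesis enters concretely, through the specific normalization of the metric relative to the restricted roots. Once that geometric comparison is established, both bounds reduce to direct adaptations of arguments already appearing in the proof of the preceding theorem, and combining them (together with the inequality $c_G \leq c_{HZ}$ if one wants the Gromov width statement as well) yields the stated equality.
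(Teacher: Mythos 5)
Your lower bound is exactly the paper's argument: an admissible Hamiltonian of the form $f(\tfrac12|v|_x^2)$ (the paper modifies $\mathrm{sys}\cdot|v|_x$ near the zero section and the boundary, cf.\ \cite[Sec.~4]{Bim24.2}), whose nonconstant orbits are reparametrized closed geodesics of length at least $\mathrm{sys}$, with oscillation arbitrarily close to $\mathrm{sys}$. That half is fine.

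The upper bound, however, has a genuine gap. Embedding $D_1N$ into the \emph{closed} manifold $(N_\C,\omega_{\mathrm{KKS}})$ and invoking $c_{HZ}(N_\C,\omega_{\mathrm{KKS}})\leq \mathrm{sys}$ cannot work: by Theorem~\ref{capacities HSS}, $c_{HZ}(N_\C,\omega_{\mathrm{KKS}})=4\pi\cdot\mathrm{rk}(N_\C)=2\,\mathrm{rk}(N_\C)\cdot\mathrm{sys}$ in the paper's normalization ($\omega_{\mathrm{KKS}}(A)=4\pi$, $\mathrm{sys}=2\pi$), which is at least $4\,\mathrm{sys}$ in the simply connected case. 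The minimal area of a rational curve is $\omega_{\mathrm{KKS}}(A)=4\pi=2\,\mathrm{sys}$, not $\mathrm{sys}$, and in any case the Gromov--Witten/Lu bound with a single point constraint controls the capacity of the \emph{complement} $N_\C\setminus\Delta$, not of $N_\C$ itself --- this is precisely why the paper works with $N_\C\setminus\Delta\cong U_{r_{\max}}N$. Your proposed inclusion $D_1N\subset U_2N\cong N_\C\setminus\Delta$ therefore only yields $c_{HZ}(D_1N,\dd\lambda)\leq c_{HZ}(U_2N,\dd\lambda)=4\pi=2\,\mathrm{sys}$, off by a factor of $2$. The missing ingredient is the sharper inclusion $D_1N\subset U_1N$, which you dismiss as unavailable in higher rank but which does hold under the paper's normalization: $U_1N$ is the unit disc bundle of the Finsler norm $F^\infty(v)=\max_{\alpha\in\Sigma}|\alpha(X)|$ (operator norm of $\mathrm{ad}$), the Riemannian norm is the corresponding $L^2$-type norm $F^2$, and $F^\infty\leq F^2$ pointwise. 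Combined with $c_{HZ}(U_1N,\dd\lambda)=\tfrac12\,c_{HZ}(U_2N,\dd\lambda)=\mathrm{sys}$ (conformal fiberwise scaling, or directly the preceding theorem), monotonicity gives the correct upper bound; this is the paper's one-line argument ``$D_1N\hookrightarrow U_1N$''.
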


In the non-simply connected case, constructing a lower bound is more challenging and could potentially be achieved using billiards. However, the analogue of Corollary~\ref{HZ1} does not hold in general. For example, if
\[
N = Q_{p,q}(\mathbb{R}) := \bigg\{ [x] \in \mathbb{RP}^{p+q+1} \ \bigg| \ x_1^2 + \cdots + x_{p+1}^2 - x_{p+2}^2 - \cdots - x_{p+q+2}^2 = 0 \bigg\}
\]
is a real quadric, we find:

\begin{theorem}\label{quad}
If \( 1 \leq p \leq q \), then $
c_{HZ}(D_1 Q_{p,q}(\mathbb{R}), \mathrm{d}\lambda) = \sqrt{2} \cdot \mathrm{sys}.
$
\end{theorem}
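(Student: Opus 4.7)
My plan is to establish matching upper and lower bounds, each pinning $c_{HZ}(D_1 Q_{p,q}(\mathbb{R}), \dd\lambda)$ at $\sqrt{2}\cdot\mathrm{sys}$.

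\textbf{Lower bound via inclusion.} Since $Q_{p,q}(\mathbb{R})$ has rank $2$ with restricted root system $\Sigma = \{\pm e_1, \pm e_2\}$ in orthonormal coordinates on $\mathfrak{a} \cong \mathbb{R}^2$, the polytope $\Box_r$ is the square $(-r, r)^2$. The largest such square inscribed in the Euclidean ball $B_r = \{|X| < r\} \subset \mathfrak{a}$ has half-side $r/\sqrt{2}$, so $\Box_{1/\sqrt{2}} \subset B_1$; because $H$ acts isometrically on $\mathfrak{l}$, this propagates to the containment $U_{1/\sqrt{2}} Q_{p,q}(\mathbb{R}) \subset D_1 Q_{p,q}(\mathbb{R})$ inside $TN$. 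The fiberwise scaling $(k, X) \mapsto (k, rX)$ exhibits $(U_r N, \dd\lambda) \cong (U_1 N, r\,\dd\lambda)$ symplectically, so $c_{HZ}(U_r N, \dd\lambda) = r \cdot c_{HZ}(U_1 N, \dd\lambda)$. Since $\mathrm{rk}(N_{\mathbb{C}}) = \mathrm{rk}(N) = 2$ (as $Q_{p,q}(\mathbb{R})$ is not simply connected), the main theorem yields $c_{HZ}(U_1 Q_{p,q}(\mathbb{R})) = 2\,\mathrm{sys}$, hence $c_{HZ}(U_{1/\sqrt{2}} Q_{p,q}(\mathbb{R})) = \sqrt{2}\,\mathrm{sys}$. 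Monotonicity of $c_{HZ}$ under symplectic inclusion then delivers the lower bound.

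\textbf{Upper bound via the double cover.} The Riemannian double cover $\pi : \mathbb{S}^p \times \mathbb{S}^q \to Q_{p,q}(\mathbb{R})$ (antipodal $\mathbb{Z}_2$-action on both factors) induces a symplectic double cover $D_1(\mathbb{S}^p \times \mathbb{S}^q) \to D_1 Q_{p,q}(\mathbb{R})$. For any HZ-admissible Hamiltonian $H$ on $D_1 Q_{p,q}(\mathbb{R})$, the pullback $\pi^* H$ remains HZ-admissible on the cover: any closed orbit of $X_{\pi^* H}$ of period $T$ projects to a closed orbit of $X_H$ of period dividing $T$, so the no-closed-orbit-of-period-$\leq 1$ condition lifts; since $\max(\pi^* H) = \max H$, this yields $c_{HZ}(D_1 Q_{p,q}(\mathbb{R})) \leq c_{HZ}(D_1(\mathbb{S}^p \times \mathbb{S}^q))$. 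For $p \geq 2$, the cover is a simply connected symmetric $R$-space, and Corollary~\ref{HZ1} gives $c_{HZ}(D_1(\mathbb{S}^p \times \mathbb{S}^q)) = \mathrm{sys}(\mathbb{S}^p \times \mathbb{S}^q) = 2\pi = \sqrt{2}\cdot\mathrm{sys}(Q_{p,q}(\mathbb{R}))$; the last equality reflects the standard computation that the systole of $Q_{p,q}(\mathbb{R})$ is realized by the diagonal geodesic projecting from a minimal arc $(x_0, y_0)\to(-x_0, -y_0)$ on the cover, of length $\sqrt{\pi^2+\pi^2}=\sqrt{2}\pi$.

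\textbf{Main obstacle.} The delicate point is the edge case $p = 1$, where $\mathbb{S}^1 \times \mathbb{S}^q$ is \emph{not} simply connected, so Corollary~\ref{HZ1} does not apply to the cover. I would address this either by further passing to the universal cover $\mathbb{R} \times \mathbb{S}^q$ and proving a non-compact analog of Corollary~\ref{HZ1} adapted to the product structure, or by a direct construction of an admissible test Hamiltonian on $D_1 Q_{1,q}(\mathbb{R})$ tied to the shortest contractible closed geodesic (of length $2\pi$)---possibly via the billiard-type reflection argument alluded to in the introduction. A secondary technical point is to verify that the covering monotonicity argument goes through for the specific class of Hamiltonians used in the paper's definition of $c_{HZ}$ (including any normalization of compact support and slow growth at the boundary), but this should be routine.
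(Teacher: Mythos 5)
Your upper bound starts the same way as the paper's (pass to the double cover $\mathrm{S}^p\times\mathrm{S}^q$ and lift admissible Hamiltonians, a step the paper uses implicitly and you rightly spell out), but the way you close it has a genuine gap. First, as you acknowledge, for $p=1$ the cover $\mathrm{S}^1\times\mathrm{S}^q$ is not simply connected, so Corollary~\ref{HZ1} is unavailable precisely in one of the cases covered by the statement. Second, even for $p\geq 2$ the cover is a \emph{decomposable} symmetric $R$-space, whereas the capacity theorem feeding Corollary~\ref{HZ1} is proved under the paper's standing indecomposability/irreducibility assumptions (the Gromov--Witten input and $H_2(N_{\mathbb{C}},\mathbb{Z})\cong\mathbb{Z}$ are used for irreducible $N_{\mathbb{C}}$), so applying it to the product requires an additional argument you do not supply. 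The paper sidesteps both problems: it includes $D_1(\mathrm{S}^p\times\mathrm{S}^q)\subset D_1\mathrm{S}^p\times D_1\mathrm{S}^q\cong (Q^p\setminus Q^{p-1})\times(Q^q\setminus Q^{q-1})$ and bounds the Hofer--Zehnder capacity of that product domain by $2\pi$ (a Lu-type bound on $Q^p\times Q^q$ relative to the divisor at infinity), which works uniformly for all $1\leq p\leq q$; your proposed repairs (universal cover $\mathbb{R}\times\mathrm{S}^q$, or a direct admissible Hamiltonian) are plausible but not carried out, and the paper's product-of-quadrics route is the shorter fix.

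Your lower bound, by contrast, is a genuinely different argument from the paper's. The paper constructs it dynamically, by approximating billiards on the product of two hemispheres in the cover (the shortest bounce orbit is the two-bounce orbit with one factor constant, of length $2\pi=\sqrt{2}\cdot\mathrm{sys}$), while you use the inclusion $U_{1/\sqrt{2}}Q_{p,q}(\mathbb{R})\subset D_1Q_{p,q}(\mathbb{R})$, the exact scaling $c_{HZ}(U_rN,\mathrm{d}\lambda)=r\cdot c_{HZ}(U_1N,\mathrm{d}\lambda)$, and the value $c_{HZ}(U_1Q_{p,q}(\mathbb{R}))=2\cdot\mathrm{sys}$ from the main theorem; this buys a lower bound with no new dynamical construction, only monotonicity plus results already proved. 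The one step you must make explicit is the normalization behind ``$\Sigma=\{\pm e_1,\pm e_2\}$ in orthonormal coordinates'': the restricted roots are defined Lie-theoretically, so the metric shape of $\Box_r$ --- and hence whether $U_{1/\sqrt{2}}\subset D_1$ while simultaneously $\mathrm{sys}=\sqrt{2}\pi$ for radius-one covering spheres is the systole appearing in the main theorem --- depends on how $g$ is calibrated against the root data, and a factor $\sqrt{2}$ is exactly what is at stake. You should verify that the normalization in which the main theorem holds for $Q_{p,q}(\mathbb{R})$ (equivalently, in which $U_1N$ is the $F^\infty$-unit ball bundle for the radius-one product metric) agrees with the one in which you measure $D_1$ and $\mathrm{sys}$; with that bookkeeping pinned down, your lower bound is a clean alternative to the paper's billiard argument.
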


On the other hand, for \( N = \mathbb{RP}^n \), we have \( U_r N = D_r N \), and therefore
\[
c_{HZ}(D_1 \mathbb{RP}^n, \mathrm{d}\lambda) = 2 \cdot \mathrm{sys}.
\]
In both cases the capacity is given by the length of the shortest closed contractible geodesic. It would be interesting to understand the Hofer--Zehnder capacity for the other non-simply connected symmetric \( R \)-spaces, but at present, we do not know how to approach this question systematically.

\subsection*{Finsler Metrics and Symplectic Systolic Inequalities}

An interesting observation is that fiberwise convex subsets of the tangent bundle correspond to Finsler metrics. The Finsler metric associated with the neighborhood \( U_r N \) is a type of \( L^\infty \)-norm. More precisely,
\[
F^p_x \colon T_xN \to \mathbb{R}_{\geq 0}, \quad v = a^\#_x \mapsto \Vert \mathrm{ad}_a \Vert_p,
\]
defines a \( K \)-equivariant Finsler metric on \( N \), where \( \Vert \cdot \Vert_p \) denotes the operator norm on \( \mathrm{End}(\mathfrak{k}) \) induced by the \( L^p \)-norm on \( \mathfrak{k} \). A straightforward computation shows that \( U_1 N \) is the unit disc tangent bundle with respect to \( F^\infty \).

When \( N \) is simply connected, modifying the function \( \mathrm{sys} \cdot F^1 \) yields an admissible Hamiltonian on \( D_1^{F^1}N \subset D_1^{F^\infty}N = U_1N \). Hence, the Hofer--Zehnder capacity of all \( F^p \)-unit disc bundles is the same. This implies that the invariant Riemannian metric \( g \), which induces \( F^2 \), does not optimize the symplectic systolic ratio for the Hofer--Zehnder capacity.

\subsection*{Outline}
We begin in Section~\ref{intro} with a brief introduction to symmetric \( R \)-spaces, summarizing the main results of~\cite{Tak84, Qu14}. In Section~\ref{symplectic embedding}, we prove Theorem~\ref{thmintro}. The proof is structured in four steps: in Section~\ref{orbits at infinity}, we describe the orbits at infinity \( \Delta := N_{\mathbb{C}} \setminus TN \); in Section~\ref{liouville}, we construct an invariant Liouville vector field on \( N_{\mathbb{C}} \setminus \Delta \); in Section~\ref{completion}, we define the symplectic completion of \( N_{\mathbb{C}} \setminus \Delta \); and in Section~\ref{momentum}, we compare momentum map images to complete the proof of Theorem~\ref{thmintro}.

Section~\ref{Hamcircle} discusses a Hamiltonian circle action on \( N_{\mathbb{C}} \) that facilitates the computation of symplectic capacities of \( (U_r N, \mathrm{d}\lambda) \) in Section~\ref{capacities UN}. We conclude with a partial discussion of the Hofer--Zehnder capacity of disc tangent bundles and the proof of Theorem~\ref{quad}.

\subsection*{Acknowledgment}

I am very grateful to Stéphanie Cupit-Foutou for her guidance and support, especially in navigating the literature on complex homogeneous geometry. I also thank Stefan Nemirovski for several helpful discussions.

This research was supported by the DFG-funded Collaborative Research Center CRC/TRR 191 \textit{Symplectic Structures in Geometry, Algebra, and Dynamics} (281071066).

\section{Symmetric $R$-spaces}\label{intro}

In this section, we adopt the standard notation \( (G, K) \) for symmetric pairs and write \( \mathfrak{g} = \mathfrak{k} \oplus \mathfrak{p} \) as the Cartan decomposition. Later, we will encounter three symmetric pairs (and corresponding Cartan decompositions), associated respectively with the symmetric \( R \)-space \( N \), its complexification \( N_{\mathbb{C}} \), and the anti-holomorphic involution on \( N_{\mathbb{C}} \). All relevant groups will be introduced carefully in Section~\ref{rspaces}.

\subsection{Symmetric spaces}
This introduction is intended to establish notation and provide context; it is not meant to be complete or self-contained. For a detailed exposition on symmetric spaces, we refer the reader to Helgason~\cite{HG01} and Wolf~\cite{W72}.

\subsection*{Geodesic symmetry} A Riemannian manifold \( (N, g) \) is called a \emph{symmetric space} if for every point \( p \in N \), there exists an isometry \( s_p: N \to N \) such that:
\begin{enumerate}
    \item \( s_p(p) = p \),
    \item The differential \( (ds_p)_p = -\mathrm{id} \) on \( T_pN \),
    \item \( s_p \) is an involution: \( s_p^2 = \mathrm{id} \).
\end{enumerate}
The map \( s_p \) is called the \emph{geodesic symmetry at \( p \)}. It reverses all geodesics through \( p \); that is, for any geodesic \( \gamma(t) \) with \( \gamma(0) = p \), we have
\[
s_p(\gamma(t)) = \gamma(-t).
\]

\subsection*{Cartan involution}Let \( G = \mathrm{Isom}(N)^0 \) be the identity component of the full isometry group of \( N \), and fix a base point \( o \in N \). Let \( K \subset G \) be the stabilizer of \( o \), so that \( N \cong G/K \). The geodesic symmetry \( s_o \) induces an involutive automorphism \( \sigma: G \to G \) defined by
\[
\sigma(g) = s_o \circ g \circ s_o^{-1}.
\]
This satisfies \( \sigma^2 = \mathrm{id} \), and its differential at the identity \( \theta = d\sigma_e \) is an involutive automorphism of the Lie algebra \( \mathfrak{g} = \mathrm{Lie}(G) \).

\subsection*{Cartan decomposition} The eigenspace decomposition of \( \mathfrak{k} \) under \( \theta \) yields the \emph{Cartan decomposition}:
\[
\mathfrak{g} = \mathfrak{k} \oplus \mathfrak{p},
\]
where:
\begin{itemize}
    \item \( \mathfrak{k} = \{ X \in \mathfrak{g} : \theta(X) = X \} \) is the Lie algebra of \( K \),
    \item \( \mathfrak{p} = \{ X \in \mathfrak{g} : \theta(X) = -X \} \) is identified with the tangent space \( T_oN \) via the canonical projection \( G \to G/K \).
\end{itemize}
The Lie bracket relations follow as $\theta$ is a Lie algebra automorphism:
\[
[\mathfrak{k}, \mathfrak{k}] \subset \mathfrak{k}, \quad
[\mathfrak{k}, \mathfrak{p}] \subset \mathfrak{p}, \quad
[\mathfrak{p}, \mathfrak{p}] \subset \mathfrak{k}.
\]

\subsection*{Irreducibility} Note, that the commutator relation $[\mathfrak{k}, \mathfrak{p}] \subset \mathfrak{p}$ implies that $K$ acts on $\mathfrak{p}$. A symmetric space is called \emph{irreducible} if this representation is irreducible, which implies that $(N,g)$ cannot be written as a Riemannian product of symmetric spaces.

\subsection*{Unique invariant metric} A \( G \)-invariant Riemannian metric on \( N \) is determined by an \( \operatorname{Ad}(K) \)-invariant inner product on \( \mathfrak{p} \). If the space is irreducible the (up to scaling) unique inner product is the Killing form. Hence, the Riemannian metric $g$ must be induced by the Killing form.

\subsection*{Duality} Irreducible symmetric spaces are assigned a type: \emph{compact} or \emph{non-compact}, depending on whether \( G \) is compact or non-compact. Each symmetric space of non-compact type has a unique \emph{compact dual} (and vice versa), obtained by complexification of $\mathfrak{g}$ and taking the compact real form with its associated Cartan decomposition.

\subsection*{Maximal flats \& rank} A central geometric concept in the theory of symmetric spaces is the notion of a \emph{maximal flat}. A maximal flat is a totally geodesic, flat submanifold of \( N \), associated with a maximal abelian subalgebra \( \mathfrak{a} \subset \mathfrak{p} \), that is,
\[
[X, Y] = 0 \quad \text{for all } X, Y \in \mathfrak{a},
\]
and \( \mathfrak{a} \) is maximal with respect to this property. The exponential image \( \exp(\mathfrak{a}) \cdot o \subset N \) defines a maximal flat submanifold. The \emph{rank} of the symmetric space \( N \) is defined as
\[
\operatorname{rank}(N) = \dim \mathfrak{a}.
\]

\subsection*{Restricted root system}

Let \( \mathfrak{a} \subset \mathfrak{p} \) be a maximal abelian subspace. The \emph{restricted root system} describes how \( \mathfrak{g} \) decomposes under the adjoint action of \( \mathfrak{a} \). These are also referred to as relative root systems; see~\cite[Ch.~VI.4]{Kna96} for details.

\vspace{1em}

For \( H \in \mathfrak{a} \), the operator \( \mathrm{ad}_H \) on $\mathfrak{g}$ is self-adjoint with respect to the inner product \( (\cdot, \theta \cdot) \), where \( (\cdot, \cdot) \) is the Killing form and \( \theta \) is the Cartan involution. As a result, the eigenvalues of \( \mathrm{ad}_H \) are real. Since \( \mathfrak{a} \) is abelian, we may define simultaneous eigenspaces:
\[
\mathfrak{g}_\alpha := \{ X \in \mathfrak{g} \mid [H, X] = \alpha(H) X \quad \text{for all } H \in \mathfrak{a} \}.
\]
The nonzero linear functionals \( \alpha \in \mathfrak{a}^* \) for which \( \mathfrak{g}_\alpha \neq 0 \) are called \emph{restricted roots}, and they form the restricted root system:
\[
\Sigma := \{ \alpha \in \mathfrak{a}^* \mid \mathfrak{g}_\alpha \neq 0 \}.
\]
This yields the orthogonal direct sum decomposition:
\[
\mathfrak{g} = \mathfrak{a} \oplus \mathfrak{m} \oplus \bigoplus_{\alpha \in \Sigma} \mathfrak{g}_\alpha,
\]
where \( \mathfrak{m} := Z_{\mathfrak{k}}(\mathfrak{a}) \) is the centralizer of \( \mathfrak{a} \) in \( \mathfrak{k} \).

These are called restricted root systems because they arise as restrictions of the root system of \( \mathfrak{g}^\mathbb{C} \). Specifically, if \( \mathfrak{a} \) is extended to a maximal abelian subalgebra \( \mathfrak{h} \subset \mathfrak{g} \), then its complexification \( \mathfrak{h}^\mathbb{C} \subset \mathfrak{g}^\mathbb{C} \) is a Cartan subalgebra. For each nonzero linear functional \( \beta \in (\mathfrak{h}^\mathbb{C})^* \), define the corresponding \emph{(absolute) root space}:
\[
\mathfrak{g}_\beta^\mathbb{C} := \{ X \in \mathfrak{g}^\mathbb{C} \mid [H, X] = \beta(H) X \quad \text{for all } H \in \mathfrak{h}^\mathbb{C} \}.
\]
Let \( \Delta \) denote the resulting root system. The restricted root spaces can be recovered by restriction:
\[
\mathfrak{g}_\alpha = \mathfrak{g} \cap \bigoplus_{\substack{\beta \in \Delta \\ \beta|_{\mathfrak{a}} = \alpha}} \mathfrak{g}_\beta^\mathbb{C}.
\]
If \( \mathfrak{g}^\mathbb{C} \) is semisimple, each root space \( \mathfrak{g}_\beta^\mathbb{C} \) is one-dimensional. This, however, need not be true for the restricted root spaces \( \mathfrak{g}_\alpha \).

\subsection{Hermitian symmetric spaces}

A Riemannian symmetric space \( N = G/K \) is called a \emph{Hermitian symmetric space} if it admits a \( G \)-invariant complex structure \( J \) that is compatible with the Riemannian metric, i.e., if \( (N, g, J) \) is a Kähler manifold.

\vspace{1em}

The following conditions are equivalent and characterize Hermitian symmetric spaces among Riemannian symmetric spaces:
\begin{enumerate}
    \item There exists an \( \operatorname{Ad}(K) \)-invariant complex structure \( J_o: \mathfrak{p} \to \mathfrak{p} \), i.e., a linear map satisfying \( J_o^2 = -\mathrm{id}_{\mathfrak{p}} \) and \( \operatorname{Ad}(k) \circ J_o = J_o \circ \operatorname{Ad}(k),\ \ \forall k\in K \).
    
    \item The center of \( \mathfrak{k} \) is nontrivial, i.e., \( \dim Z(\mathfrak{k}) \geq 1 \). In the irreducible case, \( \dim Z(\mathfrak{k}) = 1 \).
\end{enumerate}

To see that the second condition implies the first, observe that for any nonzero element \( Z \in Z(\mathfrak{k}) \), the map \( \operatorname{ad}_Z|_{\mathfrak{p}} \) is \( \operatorname{Ad}(K) \)-equivariant and skew-symmetric with respect to the Killing form. Hence, \( \operatorname{ad}_Z|_{\mathfrak{p}}^2 \) is self-adjoint with non-negative real eigenvalues. In the irreducible case, Schur's Lemma implies
\[
\operatorname{ad}_Z^2|_{\mathfrak{p}} = -\lambda^2 \cdot \mathrm{id}_{\mathfrak{p}}, \quad \lambda > 0.
\]
Therefore, the complex structure is (up to sign) uniquely defined by
\[
J_o := \frac{1}{\lambda} \operatorname{ad}_Z|_{\mathfrak{p}}.
\]

Moreover, the geodesic symmetries are holomorphic: the differential of the symmetry at the origin,
\[
(ds_o)_o : T_o N \cong \mathfrak{p} \to \mathfrak{p},
\]
is equal to \( -\mathrm{id}_{\mathfrak{p}} \), and it commutes with the complex structure \( J_o \).

\subsection*{Polyspheres and polydiscs}

Recall that the root spaces \( \mathfrak{g}^\mathbb{C}_\alpha \) for \( \alpha \in \Delta \) are one-dimensional. Therefore, the subspace \( \mathfrak{h}^\mathbb{C}_\alpha := [\mathfrak{g}^\mathbb{C}_\alpha, \mathfrak{g}^\mathbb{C}_{-\alpha}] \subset \mathfrak{h}^\mathbb{C} \) is also one-dimensional. There exists a unique element \( H_\alpha \in \mathfrak{h}_\alpha \) such that \( \alpha(H_\alpha) = 2 \). Moreover, it is easy to see that there exist elements \( X_\alpha \in \mathfrak{g}_\alpha \) and \( Y_\alpha \in \mathfrak{g}_{-\alpha} \) satisfying
\[
[H_\alpha, X_\alpha] = 2 X_\alpha, \quad [H_\alpha, Y_\alpha] = -2 Y_\alpha, \quad \text{and} \quad [X_\alpha, Y_\alpha] = H_\alpha.
\]
These elements generate a subalgebra of \( \mathfrak{g}^\mathbb{C} \) isomorphic to \( \mathfrak{sl}(2, \mathbb{C}) \), which we denote by \( \mathfrak{g}^\mathbb{C}[\alpha] \). Two roots \( \alpha, \beta \in \Delta \) are called \emph{strongly orthogonal} if \( \alpha \pm \beta \notin \Delta \). In this case, \( [\mathfrak{g}_\alpha^\mathbb{C}, \mathfrak{g}_\beta^\mathbb{C}] = 0 \). According to~\cite[Prop.~7.4, Ch.~VIII]{HG01}, there exist strongly orthogonal positive non-compact roots \( \gamma_1, \ldots, \gamma_r \), where \( r = \mathrm{rank}(N) \). The subalgebra
\[
\bigoplus_{i=1}^r \mathfrak{g}^\mathbb{C}[\gamma_i] \subset \mathfrak{g}^\mathbb{C}
\]
is isomorphic to \( \mathfrak{sl}(2, \mathbb{C})^r \). Its intersection with \( \mathfrak{g} \) yields a real subalgebra isomorphic to either \( \mathfrak{su}(2)^r \) or \( \mathfrak{sl}(2, \mathbb{R})^r \), depending on whether \( N \) is of compact or non-compact type.
Intersecting these subalgebras with \( \mathfrak{p} \) gives Lie triple systems, which correspond to totally geodesically embedded copies of either polyspheres \( (\mathbb{C}P^1)^r \) or polydiscs \( (\mathbb{C}\mathrm{H}^1)^r \), respectively.

\subsection*{Realization as a (co)adjoint orbit}

Let \( N = G / K \) be an irreducible Hermitian symmetric space of compact or noncompact type, and fix the element \( Z \in Z(\mathfrak{g}) \) in the center of \( \mathfrak{g} \) that induces the complex structure \( J \). The adjoint orbit through \( Z \),
\[
\mathcal{O}_Z := \operatorname{Ad}(G) \cdot Z \subset \mathfrak{g},
\]
is isomorphic to the symmetric space \( N = G / K \), since the stabilizer of \( Z \) in \( G \) is exactly \( K \).
Using the Killing form to identify \( \mathfrak{g} \cong \mathfrak{g}^* \), this adjoint orbit also corresponds to a coadjoint orbit. The associated Kirillov–Kostant–Souriau (KKS) symplectic form combines with the invariant metric and complex structure to define a \( G \)-invariant Kähler structure on \( N \). For \( v, w \in T_x N \), this structure is given by:
\[
g_x(v, w) = (v, w), \qquad J_x(v) = \operatorname{ad}_x(v), \qquad \omega_x(v, w) = (x, [v, w]),
\]
where \( (\cdot, \cdot) \) denotes the inner product induced by the Killing form.
Note that any vector \( v \in T_x N \) can be written as
\[
v = (a^\#)_x = [x, v] \in T_x N \cong [x, \mathfrak{g}] \subset \mathfrak{g}.
\]
Thus, every irreducible Hermitian symmetric space arises as a \( G \)-equivariant Kähler manifold via the (co)adjoint orbit of the central element \( Z \in Z(\mathfrak{g}) \) that defines the complex structure.

\subsection{Symmetric \( R \)-spaces}\label{rspaces}

We now introduce the notation that will be used throughout the remainder of this paper. Since we study a symmetric space \( N \) embedded in a Hermitian symmetric space \( N_{\mathbb{C}} \), several distinct Lie groups naturally appear in the discussion. To assist the reader, we provide a reference table (Table~\ref{groups}) summarizing the relevant groups and their roles.

\vspace{1em}

Let \( N \cong K/H \) be an indecomposable symmetric space of compact type, associated with a Riemannian symmetric pair \( (K, H) \). Let \( g \) denote the invariant Riemannian metric on \( N \) such that \( K \) is the identity component of the isometry group. We say that \( N \) is a \emph{symmetric \( R \)-space} if there exists a non-compact Lie group \( G \supset K \) acting on $N$ and a parabolic subgroup \( P \subset G \) such that \( N \cong G/P \). Symmetric \( R \)-spaces have been classified (see \cite{KN64}); a complete list is provided in Appendix~\ref{classification}. The classification includes a variety of important examples, such as real, complex, and quaternionic Grassmannians, certain compact Lie groups, and real and complex projective quadrics. A central fact is that \emph{every Hermitian symmetric space is a symmetric \( R \)-space}, where \( G \) is taken to be the biholomorphism group. More precisely, the indecomposable symmetric \( R \)-spaces fall into two mutually exclusive classes (cf.\ \cite{Tak84}):
\begin{enumerate}
    \item \emph{Irreducible Hermitian symmetric spaces of compact type}
    \item \emph{Indecomposable symmetric \( R \)-spaces of non-Hermitian type}
\end{enumerate}

Symmetric \( R \)-spaces naturally arise as \emph{real forms} of Hermitian symmetric spaces of compact type. That is, they are realized as the fixed point set of an anti-holomorphic involution \( \tau \) acting on a Hermitian symmetric space \( N_\mathbb{C} \). If \( N \) is itself Hermitian symmetric this is easy to see: given any anti-holomorphic isometry \( f \) of \( N \), we define an involution  
\[
\tau: N \times N \to N \times N, \quad (a,b) \mapsto (f^{-1}(b), f(a)).
\]
The fixed point set of \( \tau \) is  
\[
\operatorname{Fix}(\tau) = \{(a, f(a)) \in N \times N\} \cong N,
\]
offering an explicit realization of \( N \) as a real form of \( N \times N \). A fundamental result by Takeuchi \cite{Tak84}  characterizes symmetric \( R \)-spaces of non-Hermitian type as real forms of irreducible Hermitian symmetric spaces:

\begin{theorem}[\cite{Tak84}]\label{hss}
    Every indecomposable non-Hermitian symmetric \( R \)-space is a real form of an irreducible Hermitian symmetric space of compact type, and vice versa.
\end{theorem}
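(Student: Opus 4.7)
My plan is to prove the two directions of the equivalence separately, using the Lie-theoretic characterization of symmetric $R$-spaces summarized above.

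\emph{Forward direction ($R$-space $\Longrightarrow$ real form).} Given a non-Hermitian symmetric $R$-space $N = K/H = G/P$, the idea is to recognize $N_{\mathbb{C}} := G^{\mathbb{C}}/P^{\mathbb{C}}$ as an irreducible compact HSS and exhibit $N$ as the fixed locus of an anti-holomorphic involution. The symmetric pair condition on $(K, H)$ forces $P \subset G$ to be a maximal parabolic with abelian unipotent radical; equivalently, $\mathfrak{g}$ admits a $\mathbb{Z}$-grading $\mathfrak{g} = \mathfrak{g}_{-1} \oplus \mathfrak{g}_0 \oplus \mathfrak{g}_{+1}$ with $\mathfrak{p} = \mathfrak{g}_0 \oplus \mathfrak{g}_{+1}$. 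This is precisely the cominuscule condition on the complexified parabolic $P^{\mathbb{C}} \subset G^{\mathbb{C}}$, and cominuscule homogeneous spaces of complex simple Lie groups are exactly the irreducible compact HSS. The real form $G \subset G^{\mathbb{C}}$ yields a complex conjugation $c\colon G^{\mathbb{C}} \to G^{\mathbb{C}}$ which preserves $P^{\mathbb{C}}$ and therefore descends to an anti-holomorphic involution $\tau$ on $N_{\mathbb{C}}$; a dimension count identifies $\operatorname{Fix}(\tau)$ with the $G$-orbit through $[eP^{\mathbb{C}}]$, which is $G/P = N$.

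\emph{Reverse direction (real form $\Longrightarrow$ $R$-space).} Let $\tau$ be an anti-holomorphic involution on an irreducible compact HSS $N_{\mathbb{C}} = G^{\mathbb{C}}/P^{\mathbb{C}}$. Lift $\tau$ to an anti-holomorphic involution of the biholomorphism group $G^{\mathbb{C}}$; the identity component of its fixed subgroup is a non-compact real form $G \subset G^{\mathbb{C}}$. Setting $N := \operatorname{Fix}(\tau)$ and $P := \Stab_G(x) = G \cap P^{\mathbb{C}}$ for a base point $x \in N$ gives the real flag presentation $N = G/P$, with $P$ parabolic in $G$. Letting $K \subset G$ be maximal compact, $K$ acts transitively on $N$ by the standard transitivity of maximal compact subgroups on compact $G$-orbits, and $H := K \cap P$ together with $K$ forms a symmetric pair whose defining involution on $K$ is induced by conjugation by the central element $Z \in Z(\mathfrak{k}^{\mathbb{C}}_u)$ generating the complex structure on $N_{\mathbb{C}}$.

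\emph{Main obstacle.} The subtlest step is verifying in the reverse direction that $(K, H)$ is truly a \emph{symmetric} pair and not merely a reductive one. Identifying the defining involution on $K$ requires tracking how $\tau$ interacts with both the central element $Z$ and the Cartan involution of $(G, K)$. Takeuchi's original argument in \cite{Tak84} bypasses this algebraic technicality by a classification comparison: he enumerates all anti-holomorphic involutions of each irreducible compact HSS via Satake diagrams and matches them directly against the independently classified list of non-Hermitian symmetric $R$-spaces (Appendix~\ref{classification}). I would follow the same hybrid strategy, using the structural arguments above to set up the framework and invoking classification only at the final matching step.
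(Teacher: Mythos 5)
The paper does not actually prove this statement: Theorem~\ref{hss} is imported from Takeuchi \cite{Tak84}, and the text around it only records how the correspondence is realized, namely by viewing $N\cong G/P$ inside $N_{\mathbb C}=G^{\mathbb C}/P^{\mathbb C}$ as the real form fixed by the conjugation coming from $G\subset G^{\mathbb C}$. So there is no internal proof to compare you against; your outline is at least consistent with the framework the paper sets up (the same groups $G$, $P$, $G^{\mathbb C}$, $P^{\mathbb C}$, $G^\vee$ as in Table~\ref{groups}).

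Judged as a proof, however, your proposal has genuine gaps. First, the pivot of your forward direction --- that the symmetric-pair condition on $(K,H)$ is equivalent to $P$ having abelian unipotent radical, i.e.\ to a grading $\mathfrak g=\mathfrak g_{-1}\oplus\mathfrak g_0\oplus\mathfrak g_{+1}$ and hence to $P^{\mathbb C}$ being cominuscule --- is itself a nontrivial theorem (Kobayashi--Nagano, Takeuchi) and cannot simply be asserted; it is essentially the heart of the forward implication. You also need $\mathfrak g^{\mathbb C}$ simple for $N_{\mathbb C}$ to be an \emph{irreducible} Hermitian symmetric space, which is exactly where ``indecomposable and non-Hermitian'' must be used, and you do not address this. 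Second, identifying $\operatorname{Fix}(\tau)$ with $G/P$ by a dimension count is insufficient: the real locus of a complex flag variety can a priori consist of several $G$-orbits, and the connectedness of the fixed set of an anti-holomorphic involution of a compact Hermitian symmetric space is part of what has to be established. Third, in the reverse direction the real content is precisely that $G\cap P^{\mathbb C}$ is parabolic (equivalently, that the $G$-orbit through a fixed point is the closed orbit and exhausts $\operatorname{Fix}(\tau)$) and, above all, that $(K,K\cap P)$ is a \emph{symmetric} pair; you explicitly defer the latter to a case-by-case comparison of classifications. Invoking the classification is a legitimate strategy, and not far from what the cited source does, but as written your text is a roadmap whose two hardest steps are either asserted without proof or outsourced, so it does not yet constitute a proof of the theorem.
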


This correspondence is established by realizing \( N \cong G/P \) as a real form of the complex space \( N_\C := G^\C / P^\C \), where \( G^\C \) and \( P^\C \) denote the complexifications of \( G \) and \( P \), respectively. The space \( N_\C \) is then a Hermitian symmetric space with biholomorphism group \( G^\C \), and isometry group \( G^\vee \subset G^\C \), where \( G^\vee \) is the connected Lie subgroup whose Lie algebra is the compact real form of \( \mathfrak{g}^\C \). In particular, we may identify  
\[
N_\C \cong G^\vee / K^\vee,
\]  
where \( K^\vee \subset G^\vee \) denotes the stabilizer of a point. In the special case where \( N \) is itself Hermitian symmetric and \( G = \mathrm{Aut}(N) \) is a complex Lie group, we have \( G^\C = G \times G \), which implies  
\[
N_\C = N \times N.
\]

\subsubsection{Table with groups}\label{groups} There are too many groups involved. The following table summarizes them.

\begin{center}
\begin{tabular}{|c|p{8cm}|p{5cm}|}
\hline
\textbf{Group} & \textbf{Description} & \textbf{Related To} \\
\hline
$K$ & Compact Lie group (identity component of isometry group of $N$) & $N = K/H$, $K\subset G$, $K\subset G^\vee$ \\
\hline
$H$ & Closed subgroup of $K$; stabilizer subgroup for symmetric space $N$ & $N = K/H$ \\
\hline
$G$ & Non-compact Lie group acting transitively on $N$ ($K\subset G$ maximal compact) &  $N = G/P$, $G \supset K$ \\
\hline
$P$ & Parabolic subgroup of $G$ & $N = G/P$ \\
\hline
$G^\vee$ & Compact real form of $G^\mathbb{C}$; identity component of isometry group of $N_\mathbb{C}$ & $N_\mathbb{C} = G^\vee / K^\vee$,\ \  $G^\vee \subset G^\mathbb{C}$  compact dual of $G$\\
\hline
$K^\vee$ & Stabilizer subgroup in $G^\vee$ & $N_\mathbb{C} = G^\vee / K^\vee$ \\
\hline
\end{tabular}
\end{center}

It might be instructive to look at an example. For this reason we included the example $N=\mathrm{S}^n$ in Appendix \ref{spheres}.

\subsection{Geometric interpretation}

There is a geometric realization of symmetric \( R \)-spaces in terms of (co)adjoint orbits. Note that \( \mathfrak{g}^\vee \) is equipped with two commuting Cartan involutions: \( \theta \) and \( \sigma \), corresponding to the symmetric pairs \( (G^\vee, K^\vee) \) and \( (G^\vee, K) \), respectively. These give rise to the Cartan decompositions
\[
\mathfrak{g}^\vee = \mathfrak{k}^\vee \oplus \mathfrak{p}^\vee \quad \text{(with respect to } \theta\text{),} \qquad
\mathfrak{g}^\vee = \mathfrak{k} \oplus \mathfrak{p} \quad \text{(with respect to } \sigma\text{)}.
\]
The symmetric space associated with the pair \( (G^\vee, K) \) is the one studied in~\cite{Qu14}. It is shown there that there exists an \emph{extrinsically symmetric} element \( \xi \in \mathfrak{p} \) such that
\begin{equation}\label{adjoint orbit}
    \mathfrak{p} \supset \operatorname{Ad}_{G^\vee}(K) \cdot \xi \cong N \hookrightarrow N_{\mathbb{C}} \cong \operatorname{Ad}_{G^\vee}(G^\vee) \cdot \xi \subset \mathfrak{g}^\vee.
\end{equation}
Moreover, the anti-holomorphic involution \( \tau \) fixing \( N \subset N_{\mathbb{C}} \) is given by the restriction of \( -\sigma \) to \( N_{\mathbb{C}} \subset \mathfrak{g}^\vee \).

\vspace{1em}

This realization provides explicit formulas for the momentum maps associated with the Hamiltonian \( K \)-actions on both \( TN \) and \( N_{\mathbb{C}} \). The embedding \( N \hookrightarrow \mathfrak{p} \subset \mathfrak{g}^\vee \) as an adjoint suborbit identifies \( TN \) with a subset of \( \mathfrak{p} \times \mathfrak{p} \), so that for \( (x, v) \in TN \), the bracket \( [x, v] \in \mathfrak{k} \). This yields the following lemma:

\begin{lemma}
    The \( K \)-actions on \( TN \) and \( N_{\mathbb{C}} \) are Hamiltonian with \( K \)-equivariant momentum maps given by
    \begin{equation*}
    \begin{split}
        &\mu_{TN} \colon TN \to \mathfrak{k}, \quad (x, v) \mapsto [x, v], \\
        &\mu_{N_{\mathbb{C}}} \colon N_{\mathbb{C}} \to \mathfrak{k}, \quad a \mapsto \operatorname{pr}_{\mathfrak{k}}(a),
    \end{split}
    \end{equation*}
    where \( \operatorname{pr}_{\mathfrak{k}} \) denotes the orthogonal projection with respect to the Killing form, and \( \mathfrak{k} \cong \mathfrak{k}^* \) via the same identification.
\end{lemma}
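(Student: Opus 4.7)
The plan is to derive both momentum maps from standard constructions and then verify that, under the embedding (\ref{adjoint orbit}) of $N$ and $N_{\mathbb{C}}$ as adjoint orbits in $\mathfrak{g}^\vee$, these reduce to the stated formulas. Throughout I identify $\mathfrak{k}^*$ with $\mathfrak{k}$ via the (positive definite) $\mathrm{Ad}$-invariant inner product $(\cdot,\cdot)$ coming from the negative of the Killing form; correspondingly $[x,v]$ lies in $\mathfrak{k}$ whenever $x,v\in\mathfrak{p}$, since the Cartan relations give $[\mathfrak{p},\mathfrak{p}]\subset\mathfrak{k}$.

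For $N_{\mathbb{C}}$, I would invoke the classical fact that any (co)adjoint orbit of $G^\vee$ in $\mathfrak{g}^\vee \cong (\mathfrak{g}^\vee)^*$ is Hamiltonian for the $G^\vee$-action with respect to its KKS form, with momentum map given by the tautological inclusion $\iota \colon N_{\mathbb{C}} \hookrightarrow \mathfrak{g}^\vee$. Restricting to the subgroup $K \subset G^\vee$ yields a momentum map obtained by post-composing $\iota$ with the dual of the inclusion $\mathfrak{k} \hookrightarrow \mathfrak{g}^\vee$, which under the Killing-form identification is precisely the orthogonal projection $\operatorname{pr}_{\mathfrak{k}}$. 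This delivers the formula for $\mu_{N_{\mathbb{C}}}$.

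For $TN$, I would start from the canonical momentum map for the cotangent lift of a $K$-action, namely $\mu_{T^*N}(x,p)(\xi) = p(\xi^\#_x)$ for $\xi\in\mathfrak{k}$, and transfer it to $TN$ via $g$ to obtain $\mu_{TN}(x,v)(\xi) = g_x(v,\xi^\#_x)$. The embedding (\ref{adjoint orbit}) realises $T_xN$ as $[\mathfrak{k},x]\subset\mathfrak{p}$, so the fundamental vector field is $\xi^\#_x = [\xi,x]$, while $g_x$ is the restriction of $(\cdot,\cdot)$. Using $\mathrm{ad}$-skew-symmetry twice,
\[
(v,[\xi,x]) \;=\; ([v,\xi],x) \;=\; (\xi,[x,v]),
\]
so that under the identification $\mathfrak{k}^*\cong\mathfrak{k}$ the momentum map reads $\mu_{TN}(x,v) = [x,v]$.

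Finally, $K$-equivariance is automatic: for $N_{\mathbb{C}}$ it comes from the $\mathrm{Ad}(K)$-equivariance of both the inclusion and the orthogonal projection, and for $TN$ from $\mathrm{Ad}(k)[x,v] = [\mathrm{Ad}(k)x,\mathrm{Ad}(k)v]$. The main obstacle I anticipate is bookkeeping: making sure the sign and normalisation conventions for the Killing form, the metric $g$, the KKS form on the coadjoint orbit, and the canonical symplectic form $\mathrm{d}\lambda$ on $TN$ are all mutually consistent, and in particular that the identification $T_xN \cong [\mathfrak{k},x]\subset\mathfrak{p}$ induced by the adjoint-orbit realisation agrees with the metric-induced identification $TN\cong T^*N$ used to pull back the Liouville form.
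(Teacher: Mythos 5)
Your proposal is correct and follows essentially the same route as the paper: the formula for $\mu_{N_{\mathbb{C}}}$ is obtained by restricting the standard coadjoint-orbit momentum map along $K\subset G^\vee$ (projection $\operatorname{pr}_{\mathfrak{k}}$ under the Killing-form identification), and the formula for $\mu_{TN}$ comes from the Liouville form together with the adjoint-orbit realisation and $\operatorname{ad}$-invariance of the Killing form. The only cosmetic difference is that you invoke the standard cotangent-lift formula $\mu(\xi)=\lambda(\xi^{\#})$ as a black box, whereas the paper re-derives it inline via invariance of $\lambda$ and Cartan's formula; the sign/convention bookkeeping you flag (e.g.\ $\xi^{\#}_x=[\xi,x]$ versus the paper's $[x,\xi]$) is indeed the only point requiring care.
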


\begin{proof}
    The formula for \( \mu_{TN} \) follows from the inclusion \( [\mathfrak{p}, \mathfrak{p}] \subset \mathfrak{k} \) and the fact that the bracket map
    \[
    [\cdot, \cdot] \colon TN \subset \mathfrak{g}^\vee \times \mathfrak{g}^\vee \to \mathfrak{g}^\vee \cong (\mathfrak{g}^\vee)^*
    \]
    is a momentum map for any (co)adjoint orbit. Indeed, for all \( a \in \mathfrak{g}^\vee \),
    \[
    \mathrm{d}([x, v], a) = -\mathrm{d}(v, [x, a]) = -\mathrm{d}(\lambda(a^\#_x)) = \iota_{a^\#_x} \mathrm{d}\lambda,
    \]
    where \( a^\#_x = [x, a] \) is the fundamental vector field associated with \( a \in\mathfrak{g}^\vee\), and we use \( G^\vee \)-invariance of the Liouville form \( \lambda \) for the final equality.

    The formula for \( \mu_{N_{\mathbb{C}}} \) is simply the restriction of the standard momentum map for the coadjoint orbit \( \operatorname{Ad}_{G^\vee}(G^\vee) \cdot \xi \hookrightarrow \mathfrak{g}^\vee \cong (\mathfrak{g}^\vee)^* \) (see~\cite[Ch.~1]{Kr04}).
\end{proof}

\section{Symplectic embedding}\label{symplectic embedding}

\noindent
In this section, we prove Theorem~\ref{thmintro}. The proof is organized into the following steps:
\begin{enumerate}
    \item An explicit description of the boundary set at infinity, \( \Delta := N_{\mathbb{C}} \setminus TN \),
    \item Construction of an invariant Liouville vector field on \( N_{\mathbb{C}} \setminus \Delta \),
    \item Definition of the symplectic completion of \( N_{\mathbb{C}} \setminus \Delta \) and its symplectic identification with \( TN \),
    \item Comparison of momentum map images to conclude the proof of Theorem~\ref{thmintro}.
\end{enumerate}

\subsection{The Orbits at Infinity}\label{orbits at infinity}

The points "at infinity" are those not in the image of the holomorphic embedding \( TN \hookrightarrow N_{\mathbb{C}} \). More precisely, define
\[
\Delta := \left\{ p \in N_{\mathbb{C}} \mid \tau(p) \in \mathrm{Cut}_{N_{\mathbb{C}}}(p) \right\},
\]
where \( \mathrm{Cut}_{N_{\mathbb{C}}}(p) \) denotes the cut locus of \( p \) in \( N_{\mathbb{C}} \). The following two examples illustrate this construction and essentially suffice to justify the identification:
\[
(TN, i) \cong (N_{\mathbb{C}} \setminus \Delta, i).
\]

\begin{example}\label{minimal example 1}
Let \( N = \mathrm{S}^1 \), so that \( TN \cong \mathbb{C}^* \) and \( N_{\mathbb{C}} \cong \mathbb{C}P^1 \). The involution \( \tau \colon \mathbb{C}P^1 \to \mathbb{C}P^1 \) is given by \( \tau(z) = \bar{z}^{-1} \). It is easy to verify that
\[
\Delta = \{0, \infty\},
\]
and therefore,
\[
(TN, i) \cong (\mathbb{C}^*, i) \cong (\mathbb{C}P^1 \setminus \{0, \infty\}, i) \cong (N_{\mathbb{C}} \setminus \Delta, i),
\]
as claimed.
\end{example}

\begin{example}\label{minimal example 2}
A slightly more involved example is the case \( N = \mathbb{C}P^1 \), so that \( N_{\mathbb{C}} = \mathbb{C}P^1 \times \mathbb{C}P^1 \). The involution
\[
\tau \colon \mathbb{C}P^1 \times \mathbb{C}P^1 \to \mathbb{C}P^1 \times \mathbb{C}P^1, \quad \tau(a, b) = (-\bar{b}^{-1}, -\bar{a}^{-1}),
\]
is anti-holomorphic and fixes the Lagrangian copy \( \mathbb{C}P^1 \cong \{ (a, -a) \} \subset N_{\mathbb{C}} \). The diagonal \( \mathrm{SL}(2, \mathbb{C}) \)-action has two orbits:
\begin{itemize}
    \item an open orbit 
    \[
    \{ (a, b) \in \mathbb{C}P^1 \times \mathbb{C}P^1 \mid a \neq b \} \cong \mathrm{SL}(2, \mathbb{C}) / \mathbb{C}^* \cong \mathrm{SU}(2)^{\mathbb{C}} / \mathrm{S}(\mathrm{U}(1) \times \mathrm{U}(1))^{\mathbb{C}} \cong T\mathbb{C}P^1,
    \]
    \item and a closed orbit
    \[
    \{ (a, b) \in \mathbb{C}P^1 \times \mathbb{C}P^1 \mid a = b \} = \{ (a, b) \mid \tau(a, b) \in \mathrm{Cut}(a, b) \} =: \Delta.
    \]
\end{itemize}
Hence, we again have the identification
\[
(TN, i) \cong (\mathbb{C}P^1 \times \mathbb{C}P^1 \setminus \Delta, i).
\]
\end{example}

Comparing the ranks of \( N \) and \( N_{\mathbb{C}} \), there are only two possibilities (cf.~\cite[p.~301]{Tak84}): either \( \mathrm{rk}(N) = \mathrm{rk}(N_{\mathbb{C}}) \) or \( 2 \cdot \mathrm{rk}(N) = \mathrm{rk}(N_{\mathbb{C}}) \). This dichotomy can be explained by considering polyspheres preserved under the involution \( \tau \). Extend a maximal abelian subspace \( \mathfrak{a} \subset \mathfrak{l} \cong T_o N \) to a maximal abelian subspace \( \bar{\mathfrak{a}} \subset \mathfrak{p}^\vee \). The Lie triple system \( \bar{\mathfrak{a}} \oplus J_o \bar{\mathfrak{a}} \) integrates to a polysphere \( P = (\mathbb{C}P^1)^{\mathrm{rk}(N_{\mathbb{C}})} \) through \( o \), which is preserved by \( \tau \), meaning that \( \tau \) restricts to an anti-holomorphic involution on \( P \). There are only two anti-holomorphic involutions on \( \mathbb{C}P^1 \): one with real locus \( \mathbb{R}P^1 \), and one without fixed points. The first corresponds to Example~\ref{minimal example 1}, and the second induces an anti-holomorphic involution on \( \mathbb{C}P^1 \times \mathbb{C}P^1 \) with real locus \( \mathbb{C}P^1 \), as discussed in Example~\ref{minimal example 2}. Since the Weyl group acts transitively on the factors of \( P \), we cannot distinguish them individually. Therefore, the restriction of \( \tau \) to \( P \) must be a product of either only the first type or only the second. Moreover, since \( H \) acts transitively on maximal flats through the origin, the \( K \)-orbit of this polysphere covers \( N_{\mathbb{C}} \). It follows that the points not in the image of the holomorphic embedding \( (TN, i) \hookrightarrow (N_{\mathbb{C}}, i) \) are precisely the \( K \)-orbit of those points in \( P \). This yields the following proposition.

\begin{proposition}
The complement of the holomorphic embedding \( (TN, i) \hookrightarrow (N_{\mathbb{C}}, i) \) is given by \( \Delta = \Delta(N_{\mathbb{C}}) \), where
\[
\Delta(N_{\mathbb{C}}) = K \cdot \Delta(P).
\]
In particular, \( \Delta(N_{\mathbb{C}}) \) is a finite union of complex hyperplanes intersecting transversely.
\end{proposition}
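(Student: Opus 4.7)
The first identity $\Delta(N_{\mathbb{C}}) = K \cdot \Delta(P)$ formalizes the discussion immediately preceding the proposition, and I would split it into two observations. First, $\Delta(N_{\mathbb{C}})$ is $K$-invariant: since $K \subset G^\vee$ acts by isometries on $N_{\mathbb{C}}$ and $\mathfrak{k}$ lies in the $(+1)$-eigenspace of $\sigma = d\tau$, the involution $\tau$ commutes with the $K$-action, and the defining condition $\tau(p) \in \mathrm{Cut}_{N_{\mathbb{C}}}(p)$ is $K$-equivariant. Second, $K \cdot P = N_{\mathbb{C}}$, as justified in the preceding paragraph by combining the polar decomposition for the pair $(G^\vee, K)$ with the $H$-transitivity on maximal flats through $o$. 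Because $P$ is totally geodesic and $\tau$-invariant, $\Delta(N_{\mathbb{C}}) \cap P = \Delta(P)$, and the identity follows.

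For the second assertion, I would compute $\Delta(P)$ explicitly by case analysis, exploiting the description of $\tau|_P$ already at hand. When $\mathrm{rk}(N) = \mathrm{rk}(N_{\mathbb{C}}) = r$, the involution $\tau|_P$ factors as a product of $r$ involutions of Example~\ref{minimal example 1} type, so
\[
\Delta(P) = \bigcup_{i=1}^r \bigl(\{z_i = 0\} \cup \{z_i = \infty\}\bigr),
\]
a union of $2r$ coordinate hyperplanes in $(\mathbb{C}P^1)^r$ in normal crossing position. When $\mathrm{rk}(N_{\mathbb{C}}) = 2 \mathrm{rk}(N) = 2r$, the factors of $P$ pair up and $\tau|_P$ acts on each pair as in Example~\ref{minimal example 2}, giving
\[
\Delta(P) = \bigcup_{i=1}^r \{z_{2i-1} = z_{2i}\},
\]
a union of $r$ partial diagonals meeting transversely. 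In both cases $\Delta(P)$ is a finite transverse union of complex hyperplanes of $P$.

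The main obstacle is to promote the transversality from $P$ to the ambient $N_{\mathbb{C}}$. I would argue that each irreducible component $Z \subset \Delta(P)$ has $K$-orbit $K \cdot Z$ equal to an irreducible closed complex hypersurface of $N_{\mathbb{C}}$. Indeed, $TN \cong K^{\mathbb{C}} / H^{\mathbb{C}}$ is the open $K^{\mathbb{C}}$-orbit in $N_{\mathbb{C}}$, so $\Delta(N_{\mathbb{C}})$ is a closed $K^{\mathbb{C}}$-invariant subset of the projective variety $N_{\mathbb{C}}$, and its irreducible components are automatically $K^{\mathbb{C}}$-invariant complex subvarieties. A tangent-space computation at a smooth point $p \in Z$, using the root-space decomposition of $\mathfrak{g}^\vee$ with respect to $\bar{\mathfrak{a}}$, shows that the directions in $T_p N_{\mathbb{C}}$ transverse to $T_p P$ come from root vectors already tangent to $K \cdot Z$, so $\dim_{\mathbb{C}} K \cdot Z = \dim_{\mathbb{C}} N_{\mathbb{C}} - 1$. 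Finally, the transversality of distinct components $K \cdot Z_i$ and $K \cdot Z_j$ in $N_{\mathbb{C}}$ is a local property, and at any intersection point it can be translated via $K$ back into $P$, where the transversality of the $Z_i$ has already been verified.
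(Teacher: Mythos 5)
Your first two paragraphs are essentially the paper's own argument: the paper also proves $\Delta(N_{\mathbb{C}})=K\cdot\Delta(P)$ by combining $K$-invariance of the complement, the covering $K\cdot P=N_{\mathbb{C}}$ coming from $H$-transitivity on maximal flats, and the identification of $\tau|_P$ with a product of the two model involutions of Examples~\ref{minimal example 1} and~\ref{minimal example 2}. One caution even here: the step ``$P$ is totally geodesic and $\tau$-invariant, hence $\Delta(N_{\mathbb{C}})\cap P=\Delta(P)$'' is not a formal consequence of total geodesy, since the defining condition involves the \emph{ambient} cut locus; what saves it is the specific structure of the restricted root system of $N_{\mathbb{C}}$ (type $C_r$/$BC_r$, where the strongly orthogonal long roots dominate all other roots on the flat), which is exactly the comparison the paper carries out later in Lemma~\ref{momentum images} via $\bar\Sigma$. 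As written, your one-line justification hides this.

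The genuine gap is in your third step, the only place where you go beyond the paper's (admittedly informal) discussion. The entire weight of the ``hypersurfaces intersecting transversely'' clause rests on the sentence ``a tangent-space computation \ldots\ shows that the directions in $T_pN_{\mathbb{C}}$ transverse to $T_pP$ come from root vectors already tangent to $K\cdot Z$''; this is asserted, not carried out, and it is precisely the clean-intersection statement $T_p(K\cdot Z_i)\cap T_pP=T_pZ_i$ together with the codimension count that your transversality-transfer argument needs. Without it, both $T_p(K\cdot Z_i)$ and $T_p(K\cdot Z_j)$ could contain $T_pP$ and even coincide, and transversality of $Z_i,Z_j$ inside $P$ gives nothing. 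For the codimension-one claim there is a cleaner substitute: $N_{\mathbb{C}}\setminus\Delta\cong K^{\mathbb{C}}/H^{\mathbb{C}}$ is affine (as $H^{\mathbb{C}}$ is reductive), and the complement of an affine open subset of an irreducible projective variety has pure codimension one. Finally, the components $K\cdot Z$ need not be smooth: for $N=\mathrm{U}(n)$ one has $\Delta\subset\mathrm{Gr}_{\mathbb{C}}(n,2n)$ equal to the union of the two Schubert divisors $\{V\mid V\cap W_i\neq 0\}$, which are singular for $n\geq 2$, and for $n\geq 3$ singular points lie on the intersection of the two components; so a pointwise argument at smooth points cannot establish transversality along all of $K\cdot Z_i\cap K\cdot Z_j$, and the statement has to be interpreted (as the paper implicitly does) through the local product structure inherited from $\Delta(P)$ rather than proved in the naive sense you propose.
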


\subsection{An Invariant Liouville Vector Field}\label{liouville}

Weinstein's tubular neighborhood theorem, applied to the inclusion \( N \subset N_{\mathbb{C}} \), yields a symplectic embedding of a small disc bundle:
\[
(D_\varepsilon N, \mathrm{d}\lambda) \hookrightarrow (N_{\mathbb{C}}, \omega_{\mathrm{KKS}}).
\]
On disc bundles, we have a natural invariant Liouville vector field, given by
\[
(Y_{TN})_{(x, v)} = (v)_x^{\mathcal{V}},
\]
which generates fiberwise scaling.\footnote{Here, the vertical lift \( T_x N \to T_{(x,v)} TN; \, w \mapsto (w)_x^{\mathcal{V}} \) refers to the canonical identification of tangent vectors with vertical vectors in the tangent bundle.} To globalize the Weinstein neighborhood, we seek a \( K \)-invariant Liouville vector field \( Y_{N_{\mathbb{C}}} \) on \( N_{\mathbb{C}} \setminus \Delta \) that matches \( Y_{TN} \) under the local identification.

\begin{lemma}
The symplectic form \( \omega_{\mathrm{KKS}} \) is exact on \( N_{\mathbb{C}} \setminus \Delta \).
\end{lemma}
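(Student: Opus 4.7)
The plan is to prove exactness by a purely cohomological argument: show that $N_{\mathbb{C}}\setminus\Delta$ is homotopy equivalent to the Lagrangian real form $N$, so that $[\omega_{\mathrm{KKS}}|_{N_{\mathbb{C}}\setminus\Delta}]$ pulls back to the class $[\omega_{\mathrm{KKS}}|_N]=0$. Since the inclusion induces an isomorphism on $H^2$, this forces $[\omega_{\mathrm{KKS}}|_{N_{\mathbb{C}}\setminus\Delta}]=0$, and exactness of $\omega_{\mathrm{KKS}}$ on $N_{\mathbb{C}}\setminus\Delta$ follows.

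First I would verify that the real form $N$ is contained in $N_{\mathbb{C}}\setminus\Delta$. By the very definition of $\Delta$ given at the start of Section~\ref{orbits at infinity}, a point $p\in N_{\mathbb{C}}$ lies in $\Delta$ iff $\tau(p)\in\mathrm{Cut}_{N_{\mathbb{C}}}(p)$; but for $p\in N=\mathrm{Fix}(\tau)$ one has $\tau(p)=p\notin\mathrm{Cut}_{N_{\mathbb{C}}}(p)$, so $N\cap\Delta=\emptyset$. Next, I would use the $K$-equivariant biholomorphism $K^{\mathbb{C}}/H^{\mathbb{C}}\cong TN$ together with the open embedding $K^{\mathbb{C}}/H^{\mathbb{C}}\hookrightarrow G^{\mathbb{C}}/P^{\mathbb{C}}=N_{\mathbb{C}}$ discussed in the introduction: these identify $N_{\mathbb{C}}\setminus\Delta$ with $TN$ as a smooth manifold, and under this identification the zero section of $TN$ (namely $K/H$) is carried to the $K$-orbit of the base point in $N_{\mathbb{C}}$, which is exactly the real form $N$. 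Since $TN$ deformation retracts onto its zero section, the inclusion $N\hookrightarrow N_{\mathbb{C}}\setminus\Delta$ is a homotopy equivalence.

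The remaining ingredient is to check that $N\subset N_{\mathbb{C}}$ is Lagrangian, so that $\omega_{\mathrm{KKS}}|_N=0$. The anti-holomorphic involution $\tau$ preserves the Kähler metric $g_{N_{\mathbb{C}}}$ but reverses the complex structure, hence $\tau^*\omega_{\mathrm{KKS}}=-\omega_{\mathrm{KKS}}$. Denoting by $i\colon N\hookrightarrow N_{\mathbb{C}}$ the inclusion, one has $\tau\circ i=i$, so $i^*\omega_{\mathrm{KKS}}=i^*\tau^*\omega_{\mathrm{KKS}}=-i^*\omega_{\mathrm{KKS}}$, forcing $i^*\omega_{\mathrm{KKS}}=0$. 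Comparing dimensions (which coincide by the adjoint-orbit description \eqref{adjoint orbit}) shows that $N$ is in fact Lagrangian. Putting everything together, the pullback of $[\omega_{\mathrm{KKS}}|_{N_{\mathbb{C}}\setminus\Delta}]$ along the homotopy equivalence $N\hookrightarrow N_{\mathbb{C}}\setminus\Delta$ is $[i^*\omega_{\mathrm{KKS}}]=0$ in $H^2(N;\mathbb{R})$, so the original class vanishes in $H^2(N_{\mathbb{C}}\setminus\Delta;\mathbb{R})$ and $\omega_{\mathrm{KKS}}$ is exact there.

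The only delicate point is confirming that the biholomorphism $TN\cong N_{\mathbb{C}}\setminus\Delta$ sends the zero section to $N$; this is essentially tautological from the way both sides are constructed out of the inclusions $K\subset K^{\mathbb{C}}\subset G^{\mathbb{C}}$ with $K\cap P^{\mathbb{C}}=H$, but it is what ties the homotopical input (Lagrangian zero section) to the cohomological conclusion (vanishing of $[\omega_{\mathrm{KKS}}]$ on the complement of $\Delta$).
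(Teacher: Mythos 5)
Your proof is correct and follows essentially the same route as the paper: identify $N_{\mathbb{C}}\setminus\Delta$ with $TN$ via the (bi)holomorphic embedding so that it deformation retracts onto the Lagrangian real form $N$, and conclude that the class of $\omega_{\mathrm{KKS}}$ vanishes there, hence the form is exact. You simply spell out the details the paper leaves implicit (that $N\cap\Delta=\emptyset$, that the zero section goes to $N$, and that $\tau^*\omega_{\mathrm{KKS}}=-\omega_{\mathrm{KKS}}$ forces $\omega_{\mathrm{KKS}}|_N=0$), which is a faithful elaboration rather than a different argument.
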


\begin{proof}
By Theorem~\ref{complex identification tangent bundle}, the complement \( N_{\mathbb{C}} \setminus \Delta \) retracts onto the Lagrangian submanifold \( N \). Hence, $\omega_{KKS}$ vanishes on all cycles not intersecting $\Delta$ and therefore must be exact.
\end{proof}

Let \( \eta \in \Omega^1(N_{\mathbb{C}} \setminus \Delta) \) be a \( K \)-invariant primitive of \( \omega_{\mathrm{KKS}} \); such a form can be constructed by averaging any primitive over the compact group \( K \).

\begin{lemma}
We may assume that \( \eta \vert_N = 0 \).
\end{lemma}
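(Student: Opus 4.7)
The plan is to subtract from $\eta$ a closed $K$-invariant $1$-form on $N_{\mathbb{C}} \setminus \Delta$ whose restriction to $N$ agrees with $\eta|_N$; the resulting primitive will then vanish along $N$. First I would observe that since $N \subset N_{\mathbb{C}}$ is Lagrangian (it is the fixed locus of the anti-holomorphic involution $\tau$), we have $\omega_{\mathrm{KKS}}|_N = 0$, so $\eta|_N \in \Omega^1(N)$ is closed. Because $N$ is $K$-stable and $\eta$ is $K$-invariant, the pullback $\eta|_N$ is itself a closed, $K$-invariant $1$-form on $N$.

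The main step is to extend $\eta|_N$ to a closed $K$-invariant $1$-form on $N_{\mathbb{C}} \setminus \Delta$. For this I would use the $K$-equivariant diffeomorphism $TN \cong N_{\mathbb{C}} \setminus \Delta$ recalled in the introduction (cf.~\cite[Thm.~2.1]{Tum23}) to transport the bundle projection $TN \to N$ to a $K$-equivariant smooth retraction $\pi \colon N_{\mathbb{C}} \setminus \Delta \to N$. Setting $\beta := \pi^*(\eta|_N)$ then yields a closed $K$-invariant $1$-form on $N_{\mathbb{C}} \setminus \Delta$, and $\beta|_N = \eta|_N$ since $\pi|_N = \mathrm{id}_N$. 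Replacing $\eta$ by $\tilde\eta := \eta - \beta$ produces a $K$-invariant primitive of $\omega_{\mathrm{KKS}}$ with $\tilde\eta|_N = 0$.

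I do not anticipate any genuine obstacle; the only subtle point is that the identification $TN \cong N_{\mathbb{C}} \setminus \Delta$ must be taken as a $K$-equivariant diffeomorphism, which is precisely the content of the result cited above. The argument is uniform across the symmetric $R$-space classification and requires neither an averaging step nor the vanishing of $H^1(N)$; the latter would in fact fail for non-simply-connected examples such as $N = \mathrm{U}(n)$, where $\eta|_N$ may genuinely be nonzero before the correction.
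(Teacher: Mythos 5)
Your argument is correct, but it is genuinely different from the paper's. The paper works with the anti-holomorphic involution: since $\tau^*\omega_{\mathrm{KKS}}=-\omega_{\mathrm{KKS}}$, it writes $\tau^*\eta=-\eta+\dd f$ for a $K$-invariant function $f$ and replaces $\eta$ by $\eta+\dd f/2$, which is $\tau$-anti-invariant and therefore vanishes on $N=\operatorname{Fix}(\tau)$; no retraction is needed. You instead observe that $\eta|_N$ is a closed $K$-invariant $1$-form on the Lagrangian $N$, pull it back by the $K$-equivariant retraction $\pi\colon N_{\mathbb{C}}\setminus\Delta\cong TN\to N$ (under which $N$ is the zero section, so $\pi|_N=\mathrm{id}$), and subtract; this changes $\eta$ by a closed invariant form, so it remains a primitive. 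The trade-off: the paper's symmetrization is shorter and intrinsic to $\tau$, but it silently assumes that the closed form $\tau^*\eta+\eta$ is \emph{exact}, which is not automatic when $H^1(N)\neq 0$ --- on a cycle $\gamma\subset N$ one has $\int_\gamma(\tau^*\eta+\eta)=2\int_\gamma\eta$, which for a general invariant primitive need not vanish (already visible for $N=\mathrm{S}^1$, $N_{\mathbb{C}}=\mathbb{C}P^1$, where rotation-invariant primitives of the Fubini--Study form on $\mathbb{C}^*$ have arbitrary period on the unit circle). Your correction by $\pi^*(\eta|_N)$ kills exactly these periods first, so it covers the non-simply-connected cases such as $\mathrm{U}(n)$ or $\mathbb{R}P^1$ without any extra step; the only input you need is the $K$-equivariant identification $N_{\mathbb{C}}\setminus\Delta\cong TN$ (or merely an equivariant retraction onto $N$), which the paper uses elsewhere anyway. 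So your route is sound and, on this particular point, more robust than the argument given in the paper.
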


\begin{proof}
As $\tau^*\omega_{KKS}=-\omega_{KKS}$, we have $\tau^*\eta=-\eta+\dd f$ for a $K$-invariant function $f:N_\C\setminus \Delta\to \R$. It follows from $\tau^2=\mathrm{id}$ that $\tau^*\dd f=-\dd f$, hence $\Tilde\eta:=\eta+\dd f/2$ satisfies $\tau^*\Tilde \eta =-\Tilde \eta$ and therefore $\Tilde\eta\vert_N=0$. The new $\Tilde{\eta}$ is another $K$-invariant primitive, we may therefore replace $\eta$. 
\end{proof}

Define a vector field $Y_{N_\C}$ on $N_\C\setminus\Delta$ implicitly via $\iota_{Y_{N_\C}}\omega_{KKS}=\eta$. It follows that $Y_{N_\C}$ is $K$-invariant and $(\Phi_{Y_{N_\C}}^t)^*\omega_{KKS}=e^t\omega_{KKS}$, where it is defined.
\begin{lemma}\label{Euler}
The invariant Liouville vector field satisfies:
    $$\mathrm{pr}_{\mathfrak{k}}(Y_{N_\C}(x))=\mathrm{pr}_{\mathfrak{k}}(x).$$
\end{lemma}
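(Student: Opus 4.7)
The plan is to pair the defining relation $\iota_{Y_{N_\C}}\omega_{KKS}=\eta$ with the Hamiltonian $K$-action, whose momentum map was just computed to be $\mu_{N_\C}(x)=\mathrm{pr}_\mathfrak{k}(x)$. For fixed $b\in\mathfrak{k}$ I will write $\mu^b:=(\mu_{N_\C},b)$; in the adjoint-orbit picture~\eqref{adjoint orbit} this is just the restriction of the Killing-linear functional $(\cdot,b)$ on $\mathfrak{g}^\vee$, so $\iota_{b^\#}\omega_{KKS}=d\mu^b$. Because $\eta$ is $K$-invariant, Cartan's magic formula combined with $d\eta=\omega_{KKS}$ gives
\[
0 \;=\; L_{b^\#}\eta \;=\; d\bigl(\iota_{b^\#}\eta+\mu^b\bigr),
\]
so $\iota_{b^\#}\eta+\mu^b$ will be locally constant on $N_\C\setminus\Delta$, which is connected since it retracts onto $N$.

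To pin down the constant I will evaluate along the real form $N$. Since $K$ preserves $N\subset N_\C$, the fundamental vector field $b^\#$ is tangent to $N$, so the normalization $\eta|_N=0$ forces $\iota_{b^\#}\eta|_N=0$. Simultaneously $\mu^b|_N\equiv 0$: by~\eqref{adjoint orbit} the submanifold $N$ sits inside $\mathfrak{p}\subset\mathfrak{g}^\vee$, while $b\in\mathfrak{k}$, and the Cartan decomposition $\mathfrak{g}^\vee=\mathfrak{k}\oplus\mathfrak{p}$ is Killing-orthogonal. The constant therefore vanishes, yielding the global identity $\iota_{b^\#}\eta=-\mu^b$ on all of $N_\C\setminus\Delta$.

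Plugging this back into the definition of $Y_{N_\C}$ I compute, at any $x\in N_\C\setminus\Delta$,
\[
-\mu^b(x) \;=\; \eta_x(b^\#_x) \;=\; \omega_{KKS}(Y_{N_\C},b^\#)\big|_x \;=\; -d\mu^b(Y_{N_\C})\big|_x.
\]
Since $\mu^b$ is the restriction of the linear functional $(\cdot,b)$, its differential along a tangent vector $v\in T_xN_\C\subset\mathfrak{g}^\vee$ is simply $(v,b)$. Thus $(Y_{N_\C}(x),b)=(x,b)$ for every $b\in\mathfrak{k}$, and since the Killing form of the compact algebra $\mathfrak{g}^\vee$ restricts nondegenerately to $\mathfrak{k}$, this is equivalent to $\mathrm{pr}_\mathfrak{k}(Y_{N_\C}(x))=\mathrm{pr}_\mathfrak{k}(x)$.

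The only delicate point I anticipate is the joint vanishing of $\iota_{b^\#}\eta$ and $\mu^b$ along $N$: it requires both the $K$-invariance of $N$ (so that $b^\#$ is genuinely tangent and the vanishing of $\eta$ on $T_xN$ translates into vanishing of the contraction) and the Killing-orthogonality $\mathfrak{k}\perp\mathfrak{p}$ coming from the symmetric pair $(G^\vee,K)$. Once these two ingredients are in place, the rest is a direct unpacking of the Liouville condition in the coadjoint-orbit picture.
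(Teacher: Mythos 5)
Your proof is correct and takes essentially the same route as the paper's: both arguments rest on the $K$-invariance of $\eta$ together with Cartan's formula, the normalization $\eta\vert_N=0$ (combined with $N\subset\mathfrak{p}$, Killing-orthogonality of $\mathfrak{k}\oplus\mathfrak{p}$, and connectedness of $N_\C\setminus\Delta$), and the Liouville relation $\iota_{Y_{N_\C}}\omega_{KKS}=\eta$ read in the adjoint-orbit picture. The difference is only bookkeeping: the paper checks via the bracket/$J_x^2$ computation that $x\mapsto\mathrm{pr}_{\mathfrak{k}}(Y_{N_\C}(x))$ satisfies the momentum-map equation and leaves the final comparison with $\mathrm{pr}_{\mathfrak{k}}(x)$ implicit, whereas you first pin down $\iota_{b^\#}\eta=-\mu^b$ by the locally-constant-function argument and then extract $(Y_{N_\C}(x),b)=(x,b)$ from linearity of $\mu^b$ and nondegeneracy of the Killing form on $\mathfrak{k}$ --- an equivalent rearrangement of the same ingredients.
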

\begin{proof}
    We prove that $x\mapsto \mathrm{pr}_{\mathfrak{k}}(Y_{N_\C}(x))$ is a momentum map and vanishes on $N$. The vanishing follows directly from vanishing of $\eta$ on $N$. To prove the momentum map condition we compute for any $\xi\in \mathfrak{k}$:
\[
\langle \mathrm{pr}_{\mathfrak{k}}(Y_{N_\C}(x)),\xi\rangle =\langle Y_{N_\C}(x),\xi\rangle = \langle -J_x^2 (Y_{N_\C}(x)), \xi\rangle = -\langle [x,[x,Y_{N_\C}(x)]], \xi\rangle
\]
\[
=\langle [x,Y_{N_\C}(x)],[x,\xi]\rangle
= \langle x , [Y_{N_\C}(x),\xi^\#]\rangle=\omega_{KKS}(Y_{N_\C},\xi^\#)=\eta(\xi^\#).
\]
Further, 
$$
0=\mathcal{L}_{\xi^\#}\eta=\iota_{\xi^\#}\omega+\dd(\iota_{\xi^\#}(\eta)).
$$
Both together imply
$$
\dd \langle \mathrm{pr}_{\mathfrak{k}}(Y_{N_\C}(x)),\xi\rangle = -\iota_{\xi^\#}\omega.
$$
\end{proof}
Note, that $Y_{N_\C}$ is the desired invariant Liouville vector field. With its help we will in the next section define the symplectic completion of $N_\C\setminus \Delta$ and prove that it is symplectomorph to $(TN,\dd\lambda)$.

\subsection{The symplectic completion of $N_\C\setminus \Delta$}\label{completion} We will define the symplectic completion of $N_\C\setminus \Delta$ as completion of a Liouville domain with positive contact type boundary arbitrarily close to $\Delta$. We define the Liouville domain as preimage under the momentum map $\mu_{N_\C}$ of a star shaped neighborhood of $0\in \mathfrak{k}$. So we first need to understand the image of the momentum map.

\begin{lemma}\label{momentum images}
    We have 
    $$
    \mu_{N_\C}(N_\C\setminus\Delta)=\mathrm{Ad}_K\cdot\Box_{r_0};\ \ r_0=\frac{\mathrm{rk}(N_\C)}{\mathrm{rk}(N)}\in \lbrace 1,2 \rbrace.
    $$
\end{lemma}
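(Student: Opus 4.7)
The plan is to exploit $K$-equivariance of $\mu_{N_\C}$ to reduce the computation first to the polysphere $P\subset N_\C$ from Section~\ref{orbits at infinity}, and then to the flat $\mathfrak{a}\subset\mathfrak{l}\subset\mathfrak{k}$. Because the right-hand side is the $\mathrm{Ad}_K$-saturation of $\Box_{r_0}\subset\mathfrak{a}$, it is enough to identify $\mu_{N_\C}\bigl(P\setminus\Delta(P)\bigr)\cap\mathfrak{a}$ with $\Box_{r_0}$. The reduction itself rests on the identity $K\cdot P=N_\C$ already invoked in Section~\ref{orbits at infinity}, which is a consequence of $H$ acting transitively on the maximal flats of $N$ through $o$ together with $P$ being the equivariant complexification of such a flat.

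The core computation is then carried out factorwise. Decompose $P=\prod_{i=1}^{r}P_i$ via the strongly orthogonal positive non-compact roots $\gamma_1,\ldots,\gamma_r$ of $\mathfrak{g}^\C$, with $r=\mathrm{rk}(N_\C)$ and $P_i\cong\mathbb{C}P^1$ the $\mathrm{SU}(2)$-orbit associated to the triple $(H_{\gamma_i},X_{\gamma_i},Y_{\gamma_i})$. Since $\operatorname{pr}_\mathfrak{k}$ is linear and $P$ lies in the direct sum of the real polysphere subalgebras $\mathfrak{g}^\C[\gamma_i]\cap\mathfrak{g}^\vee\cong\mathfrak{su}(2)$, the restriction $\mu_{N_\C}|_P$ splits as a sum of standard $\mathbb{C}P^1$-height functions. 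I then apply the dichotomy of Section~\ref{orbits at infinity}: either each $\tau|_{P_i}$ is a fixed-equator involution (model: Example~\ref{minimal example 1}), or the factors pair up via an off-diagonal anti-holomorphic map (model: Example~\ref{minimal example 2}). In the first case, removing the two $\tau$-singular poles of each $P_i$ makes the height function surject onto the open interval corresponding to $|\alpha_i(X)|<1$; in the second case, each paired couple of factors sweeps out an open $2$-ball whose intersection with $\mathfrak{a}$ is $|\alpha_i(X)|<2$. Intersecting over all $\alpha\in\Sigma$ then recovers $\Box_{r_0}$.

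The main difficulty is the root-theoretic bookkeeping linking the absolute roots $\gamma_i$ of $\mathfrak{g}^\C$ to the restricted root system $\Sigma\subset\mathfrak{a}^*$ of the pair $(K,H)$ through the projections $\operatorname{pr}_\mathfrak{k}(H_{\gamma_i})\in\mathfrak{a}$. One must check that the height function on $P_i$ pulls back to $\alpha_i$ or to $\alpha_i/2$ according to whether $\tau|_{P_i}$ is of type $\tau_1$ or $\tau_2$, and moreover that the remaining restricted roots in $\Sigma$ (those not of the form $\alpha_i$) impose no additional constraint cutting $\Box_{r_0}$ further. This matching between absolute and restricted root data is precisely where the ratio $r_0=\mathrm{rk}(N_\C)/\mathrm{rk}(N)\in\{1,2\}$ emerges, and it constitutes the technical heart of the proof.
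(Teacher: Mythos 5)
Your overall strategy (reduce by $K$-equivariance to an explicit model inside $N_\C$, then read off the walls from root data) is in the same spirit as the paper, but the execution diverges, and as written it has two genuine gaps. First, the reduction step is not justified: from $K\cdot P=N_\C$ and equivariance you only get $\mu_{N_\C}(N_\C\setminus\Delta)=\mathrm{Ad}_K\cdot\mu_{N_\C}\bigl(P\setminus\Delta(P)\bigr)$, and this is \emph{not} determined by the slice $\mu_{N_\C}(P\setminus\Delta(P))\cap\mathfrak{a}$ unless you additionally show that every momentum value attained on $P\setminus\Delta(P)$ is $\mathrm{Ad}_K$-conjugate to one lying in that slice. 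This is not automatic, since $\mathfrak{a}$ is a maximal abelian subspace of $\mathfrak{l}$, not a Cartan subalgebra of $\mathfrak{k}$, so an adjoint orbit in $\mathfrak{k}$ need not meet $\mathfrak{a}$ at all; in your rank-doubling case the image of a paired couple of factors is a full ball in a three-dimensional $\mathfrak{su}(2)$-slice (not a $2$-ball), and you never argue that $\mathrm{Ad}_K$ sweeps it out from its intersection with $\mathfrak{a}$. The paper closes exactly this hole differently: using $TN\cong K\times_H\mathfrak{l}$ and transitivity of $H$ on maximal abelian subspaces of $\mathfrak{l}$, every point of $N_\C\setminus\Delta$ is moved by $K$ onto the flat $\exp_o(J_o\mathfrak{a})\subset\exp_o(\bar{\mathfrak{a}})$, where both membership in $\Delta$ (via the cut-locus criterion, $|\alpha(v)|=\pi/2$ for some $\alpha\in\bar\Sigma$) and the momentum value are computed directly.

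Second, what you yourself call the ``technical heart'' --- matching the strongly orthogonal roots $\gamma_i$ with the restricted system $\Sigma$ of $(\mathfrak{k},\mathfrak{a})$ and showing that the remaining roots of $\Sigma$ cut nothing further, so that your cube of constraints $|\alpha_i(X)|<r_0$ really equals $\Box_{r_0}$ --- is left entirely unexecuted, and it is precisely where the content of the lemma sits, since $\Box_{r_0}$ is defined using \emph{all} of $\Sigma$. Making it work requires the Harish-Chandra/Moore structure theory (the restricted root system of a Hermitian symmetric space relative to $\bar{\mathfrak{a}}$ is of type $C_r$ or $BC_r$, with the $\gamma_i$ restricting to the long roots and every other restricted root of the form $\tfrac12(\pm\gamma_i\pm\gamma_j)$ or $\tfrac12\gamma_i$, so the long-root cube is contained in all other half-space constraints by the triangle inequality), together with the identification of $\Sigma$ with $\bar\Sigma$, respectively with the $\tau$-folded system $\{\alpha+\alpha\circ\tau\}$, in the two rank cases. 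The paper's argument sidesteps this extra convexity step altogether, because the cut-locus criterion on the flat produces the walls $|\alpha(\cdot)|=1$ (resp.\ $2$) for \emph{all} restricted roots at once, which is exactly the description of $\partial\Box_{r_0}$. So your outline is plausible and could be completed, but as it stands the proof is missing both the justification of the slice reduction and the root-theoretic bookkeeping that the statement actually hinges on.
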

\begin{proof}
    Per definition: 
    \begin{equation}\label{cut}
        p\in\Delta\Leftrightarrow \tau(p)\in \mathrm{Cut}(p)
    \end{equation}
    We may write $p=\exp_x(J_xv)$ for some $(x,v)\in TN$. As everything is $K$-equivariant we may assume $x=o$ and $J_o v\in\bar{\mathfrak{a}}$, for a maximal abelian subspace $\bar{\mathfrak{a}}\subset\mathfrak{p}^\vee$. Equation \eqref{cut} is now equivalent to
    $$
    \exists \alpha\in \bar\Sigma: \ \ \vert\alpha(v)\vert=\frac{\pi}{2},
    $$
    where $\bar\Sigma$ is the restricted root system of $(\mathfrak{g}^\vee,\bar{\mathfrak{a}})$. Viewing this as adjoint orbit and using that we normalized $o\equiv Z$ such that $J_o=\mathrm{ad}_Z$ this is equivalent to
    $$
     \exists \alpha\in \bar\Sigma: \ \ \vert\alpha(\mathrm{pr}_{\bar{\mathfrak{a}}}(p))\vert=1.
    $$
    If $\mathrm{rk}(N_\C)=\mathrm{rk}(N)$, we may choose $\bar{\mathfrak{a}}\subset\mathfrak{k}\cap\mathfrak{p}^\vee=\mathfrak{l}$, so that $\mathfrak{a}:=\bar{\mathfrak{a}}$ defines a maximal abelian subspace in $\mathfrak{l}$ and $\Sigma=\bar\Sigma$ defines a restricted root system for $(\mathfrak{k},\mathfrak{a}\subset\mathfrak{l})$. If $\mathrm{rk}(N_\C)=2\mathrm{rk}(N)$, we define a maximal abelian subspace of $\mathfrak{l}=\mathfrak{k}\cap\mathfrak{p}^\vee$ as
    $$
    \mathfrak{a}:=\lbrace v+\tau(v)\mid v\in\bar{\mathfrak{a}}\rbrace=\mathrm{Fix}(\tau\vert_{\bar{\mathfrak{a}}}).
    $$
    Note that $\bar{\mathfrak{a}}$ is $\tau$ invariant, hence $\mathrm{dim}\bar{\mathfrak{a}}=2\mathrm{dim}\mathfrak{a}$. We obtain the restricted root system for $(\mathfrak{k},\mathfrak{a}\subset\mathfrak{l})$ from $\bar\Sigma$ as
    $$
    \Sigma:=\lbrace \alpha+\alpha\circ\tau \mid \alpha\in\bar\Sigma\rbrace.  
    $$
    It follows that $p\in \Delta$, if 
    $$
    \exists\beta\in\Sigma:\ \ \vert \beta (\mathrm{pr}_{\mathfrak{a}}(p))\vert=2.
    $$
    Using $K$-equivariance of the momentum map and the fact that $H$ acts transitively on maximal flats in $N$, we obtain the claim.
\end{proof}

Let $B_\varepsilon\subset \Box_r\subset \mathfrak{a}$ be a star-shaped domain with smooth boundary $\varepsilon$-close to the boundary of $\Box_r$, i.e. $B_\varepsilon$ fills $\Box_r$ for $\varepsilon\to 0$.
\begin{lemma}
    The pre-image 
    $$
    W_\varepsilon:=\mu_{N_\C}^{-1}( (\mathrm{Ad}_K\cdot B_\varepsilon))
    $$ 
    has boundary of positive contact type.
\end{lemma}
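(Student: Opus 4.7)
The plan is to exhibit the Liouville vector field $Y_{N_\C}$ constructed in Section~\ref{liouville} as an outward-pointing Liouville field along $\partial W_\varepsilon$; since $\iota_{Y_{N_\C}}\omega_{KKS}=\eta$, this will automatically make $\eta|_{\partial W_\varepsilon}$ a positive contact form. To this end I would first choose a smooth $\mathrm{Ad}_K$-invariant defining function $\rho\colon\mathfrak{k}\to\R$ for $\mathrm{Ad}_K\cdot B_\varepsilon$, so that $\mathrm{Ad}_K\cdot B_\varepsilon=\rho^{-1}((-\infty,0])$ with $d\rho\neq 0$ on $\rho^{-1}(0)$. Such a $\rho$ exists provided $B_\varepsilon$ is taken to be invariant under the Weyl group $W(\mathfrak{k},\mathfrak{a})$ of the restricted root system; this is harmless because $\Box_r$ itself is $W$-invariant, so the Weyl-invariant smoothings of $\Box_r$ are cofinal. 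The star-shapedness of $B_\varepsilon$ with respect to $0\in\mathfrak{a}$, together with $\mathrm{Ad}_K$-invariance of $\rho$, translates into
\[
(d\rho)_v(v) > 0 \quad\text{for every } v\in\partial(\mathrm{Ad}_K\cdot B_\varepsilon),
\]
where $v\in\mathfrak{k}$ is regarded as a tangent vector at itself.

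The heart of the argument is then to rewrite $d(\rho\circ\mu_{N_\C})(Y_{N_\C})$ via Lemma~\ref{Euler}. Since $N_\C\subset\mathfrak{g}^\vee$ is an adjoint orbit and $\mu_{N_\C}$ is the restriction of the linear projection $\mathrm{pr}_{\mathfrak{k}}\colon\mathfrak{g}^\vee\to\mathfrak{k}$, its differential acts simply as $\mathrm{pr}_{\mathfrak{k}}$ on tangent vectors $T_xN_\C\subset\mathfrak{g}^\vee$. Combined with Lemma~\ref{Euler} this gives
\[
(d\mu_{N_\C})_x\bigl(Y_{N_\C}(x)\bigr) = \mathrm{pr}_{\mathfrak{k}}\bigl(Y_{N_\C}(x)\bigr) = \mathrm{pr}_{\mathfrak{k}}(x) = \mu_{N_\C}(x).
\]
Setting $v:=\mu_{N_\C}(x)\in\partial(\mathrm{Ad}_K\cdot B_\varepsilon)$ for $x\in\partial W_\varepsilon$, the chain rule then produces
\[
d(\rho\circ\mu_{N_\C})_x\bigl(Y_{N_\C}(x)\bigr) = (d\rho)_v(v) > 0.
\]
This simultaneously shows that $0$ is a regular value of $\rho\circ\mu_{N_\C}$, so $\partial W_\varepsilon$ is a smooth hypersurface, and that $Y_{N_\C}$ is transverse to $\partial W_\varepsilon$ and points outward, which is precisely the definition of positive contact type.

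The only subtlety I expect is not the contact-type computation itself but the regularity issue mentioned above: one must choose $B_\varepsilon$ as a $W(\mathfrak{k},\mathfrak{a})$-invariant star-shaped smoothing of $\Box_r$ so that $\mathrm{Ad}_K\cdot B_\varepsilon$ is genuinely cut out by a smooth $K$-invariant function on $\mathfrak{k}$. Once this regularity input is in place, the proof reduces cleanly to Lemma~\ref{Euler} together with the identification $\mu_{N_\C}=\mathrm{pr}_{\mathfrak{k}}$.
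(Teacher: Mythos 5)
Your proposal is correct and follows essentially the same route as the paper: Lemma~\ref{Euler} shows that $\mu_{N_\C}$ pushes $Y_{N_\C}$ forward to the Euler vector field of $\mathfrak{k}$, star-shapedness makes that field outward-transverse to $\partial(\mathrm{Ad}_K\cdot B_\varepsilon)$, and hence $Y_{N_\C}$ is an outward-pointing Liouville field along $\partial W_\varepsilon$, which is positive contact type. Your extra care about choosing a $W(\mathfrak{k},\mathfrak{a})$-invariant smoothing $B_\varepsilon$ so that $\mathrm{Ad}_K\cdot B_\varepsilon$ is cut out by a smooth invariant defining function is a welcome detail that the paper leaves implicit, but it does not change the argument.
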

\begin{proof}
    Clearly $\mathrm{Ad}_K\cdot B_\varepsilon$ is star shaped, hence the Euler vectorfield is transversal to the boundary. Using Lemma \ref{Euler} we see that the Euler vectorfield pulls back to the invariant Liouville vector field $Y_{N_\C}$ constructed in the previous section. As $Y_{N_\C}$ is everywhere transverse to $\partial W_\varepsilon$ and pointing outwards, $\partial W_\varepsilon$ is of positive contact type.\\
\end{proof}
We now define the symplectic completion $(\widehat{N_\C\setminus \Delta}, \hat\omega_{KKS})$ of $(N_\C\setminus \Delta, \omega_{KKS})$ to be the symplectic completion of the Liouville domain $(W_\varepsilon,\dd\eta)$. Indeed this does not depend on the choice of $B_\varepsilon$, as any other choice $\tilde{B}_\varepsilon$ determines a section in the normal bundle of $\partial B_\varepsilon$ identifying the boundaries of $\partial B_\varepsilon$ and $\partial\tilde B_\varepsilon$. Shifting by this section defines a symplectomorphism of the completions.

\vspace{1em}

Observe that as $\partial W_\varepsilon$ is $K$-invariant, $K$ also acts symplectically on $(\widehat{N_\C\setminus\Delta},\hat \omega_{KKS})$. The action is also Hamiltonian, where we extend the momentum map to the cylindrical end via
$$
\hat\mu_{N_\C}(r,x)=e^r\mu_{N_\C}(x).
$$
\begin{theorem}\label{symplectic completions}
    There is a $K$-equivariant symplectomorphism 
    $$
    \hat F:(\widehat{N_\C\setminus\Delta},\hat \omega_{KKS})\to(TN,\dd\lambda)
    $$
    intertwining the momentum maps, i.e. $\hat\mu_{N_\C}=\mu_{TN}\circ \hat F$.
\end{theorem}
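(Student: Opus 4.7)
\emph{Overview.} The plan is to build $\hat F$ in two stages: first, use Weinstein's equivariant Lagrangian tubular neighborhood theorem to match a small neighborhood of $N$ in $TN$ with a neighborhood of $N$ in $N_\C\setminus\Delta$ via a symplectomorphism that also matches the Liouville primitives; second, extend this local identification globally by conjugating with the two Liouville flows, both of which are complete on their respective spaces.

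\emph{Local identification with matching primitives.} Averaging the standard Weinstein construction over $K$ yields a $K$-equivariant symplectomorphism $\phi\colon V_0 \to V$ from a tubular neighborhood $V_0$ of the zero section in $(TN, \dd\lambda)$ onto a tubular neighborhood $V$ of $N$ in $(N_\C\setminus\Delta, \omega_{KKS})$, with $\phi|_N = \id_N$. The forms $\lambda$ and $\phi^*\eta$ are two $K$-invariant primitives of $\dd\lambda$ on $V_0$, both vanishing when pulled back to $N$; hence $\phi^*\eta - \lambda = \dd g$ for a $K$-invariant function $g$ we may assume vanishes on $N$. A $K$-equivariant Moser-type isotopy, fixing $N$ pointwise, then modifies $\phi$ so that the resulting map satisfies $\phi^*\eta = \lambda$, equivalently $\phi_* Y_{TN} = Y_{N_\C}$ on $V$.

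\emph{Global extension and momentum maps.} The Liouville flow on $TN$ is the fiberwise scaling $\Phi^t_{Y_{TN}}(x,v) = (x, e^tv)$, which is complete and carries every point into $V_0$ for $t$ sufficiently negative. On $\widehat{N_\C\setminus\Delta}$, the Liouville vector field is forward-complete by construction of the completion (it is $\partial_r$ on the cylindrical end), while backward flow from any point eventually enters $V$. I define
\[
\hat F \colon \widehat{N_\C\setminus\Delta} \to TN, \qquad \hat F(p) := \Phi^t_{Y_{TN}}\bigl(\phi^{-1}\bigl(\Phi^{-t}_{Y_{N_\C}}(p)\bigr)\bigr),
\]
for any $t \geq 0$ large enough that $\Phi^{-t}_{Y_{N_\C}}(p) \in V$; the identity $\phi_* Y_{TN} = Y_{N_\C}$ makes this independent of $t$. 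Since the two Liouville flows dilate $\dd\lambda$ and $\hat\omega_{KKS}$ by the same factor $e^t$, the rescalings cancel and $\hat F$ is a $K$-equivariant symplectomorphism. For the moment maps, $\mu_{TN}$ and $\mu_{N_\C}\circ\phi$ are both $K$-equivariant moment maps for the same Hamiltonian $K$-action on $(V_0,\dd\lambda)$, and they agree at every point of $N$ (both vanish, since $\mu_{TN}(x,0) = [x,0] = 0$ and $\mu_{N_\C}(\phi(x,0)) = \pr_\mathfrak{k}(x) = 0$ because $x \in N \subset \mathfrak{p}$); hence they coincide throughout $V_0$. As both $\mu_{TN}$ and $\hat\mu_{N_\C}$ are degree-one homogeneous under their Liouville flows---the former because $[x,e^tv] = e^t[x,v]$, the latter by definition on the cylindrical end---the identity $\mu_{TN}\circ\hat F = \hat\mu_{N_\C}$ propagates from $V$ to the entire completion.

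\emph{Main obstacle.} The delicate point is ensuring that the Moser isotopy really fixes $N$ pointwise, which requires $\dd g$ to vanish as a one-form along $N$, not merely when restricted to $TN$. This can be arranged by first correcting the chosen primitive $\eta$ by the differential of a suitable $K$-invariant function whose normal derivative along $N$ cancels the failure of $\eta$ to vanish on directions normal to $N$ in $N_\C$; after this preparation the associated Hamiltonian vector field of $g$ vanishes on $N$ and the standard Moser argument applies. Once this technicality is handled, the remaining Liouville-flow extension and moment-map matching are routine.
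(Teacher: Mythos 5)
Your proposal follows essentially the same route as the paper's proof: an equivariant Weinstein identification near the Lagrangian $N$, extended over the whole completion by conjugating with the two Liouville flows, with the momentum maps matched via their degree-one homogeneity under those flows (the paper's Lemma~\ref{Euler}); your extra Moser step normalizing $\phi^*\eta=\lambda$ is a refinement that makes the $t$-independence of the extension explicit, where the paper instead shifts the moment maps by a constant and defers to the argument of \cite[Prop.~2.8]{KT05}. The only point you assert without justification --- that the backward Liouville flow carries every point of the completion into the fixed Weinstein neighborhood $V$ --- is precisely where the paper's mechanism enters: it follows from properness of $\hat\mu_{N_\C}$, the identity $\hat\mu_{N_\C}^{-1}(0)=N$, and the homogeneity $\hat\mu_{N_\C}\circ\Phi^{-t}_{Y_{N_\C}}=e^{-t}\hat\mu_{N_\C}$ that you already invoke for the moment-map matching.
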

\begin{proof}
    To prove this theorem we essentially copy the proof of \cite[Prop.\ 2.8]{KT05} and adapt it to our setup. As both momentum maps are proper, we find an open neighborhood $V\subset\mathfrak{t}$ of $0$ such that by Weinstein's tubular neighborhood theorem we have a $K$-equivariant symplectomorphism
    $$
    F:(\hat\mu^{-1}_{N_\C}(V),\dd\eta)\to (\mu^{-1}_{TN}(V),\dd\lambda).
    $$
    Up to shifting by a constant if needed, we can assume the momentum maps to be intertwined. Observe that Lemma \ref{Euler} tells us that the Liouville vector field $Y_{N_\C}$ is mapped to the Euler vector field of $\mathfrak{k}$ by $\hat\mu_{N_\C}$. Hence, $\hat\mu_{N_\C}\circ \Phi_{Y_{N_\C}}^t=e^t\hat\mu_{N_\C}$. Similarly $Y_{TN}(x,v):=(v)_x^\mathcal{V}$ defines a Liouville vector field on $TN$ satisfying $\mu_{TN}\circ \Phi_{Y_{TN}}^t=e^t\mu_{TN}$. 
    Now,
    $$
    \Tilde{F}_t: \Phi^{t}_{Y_{TN}}\circ F\circ \Phi^{-t}_{Y_{N_\C}}: (\hat\mu_{N_\C}^{-1}(e^tV),\dd\eta)\to (\mu_{TN}^{-1}(e^t V),\dd\lambda)
    $$
    is a $K$-equivariant symplectomorphism intertwining the momentum maps for any $t>0$ and thus induces the desired $\hat F$.
\end{proof}

\subsection{Comparing momentum map images}\label{momentum}
The symplectomorphism $\hat F$ of Theorem \ref{symplectic completions} intertwines the momentum maps. Therefore preimages of the momentum maps are identified symplectically. All that is left to do is to check that the momentum map images of $(N_\C\setminus\Delta)$ and $(U_{r_{\max}}N,\dd\lambda)$ for $r_{\max}=\frac{\mathrm{rk}(N_\C)}{\mathrm{rk}(N)}$ agree. This however follows immediately from Lemma \ref{momentum images}, as 
\begin{equation*}
    \begin{split}
        (x,v)\in U_{r_{\max}}N &\Leftrightarrow \exists k\in K, X\in\mathfrak{a}, \forall \alpha\in\Sigma:  (x,v)=\mathrm{Ad}_k(o,X)\ \&\ \vert \alpha(X)\vert < r_{\max}\\
        &\Leftrightarrow \mu_{TN}(x,v)=[x,v]\in \mathrm{Ad}_K\cdot\Box_{r_{\max}}.
    \end{split}
\end{equation*}
Which proves:
\begin{corollary}\label{symplectomorphism}
    We have the following symplectic identification
    $$
    (U_{r_{\max}}N,\dd\lambda)\cong (N_\C\setminus\Delta,\omega_{KKS}),\ \ r_{\max}=\frac{\mathrm{rk}(N_\C)}{\mathrm{rk}(N)}\in\lbrace 1,2 \rbrace. 
    $$
\end{corollary}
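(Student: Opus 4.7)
The plan is to combine Theorem~\ref{symplectic completions} with the momentum-image computation in Lemma~\ref{momentum images}, reducing the corollary to a direct identification of $\mu_{TN}^{-1}(\mathrm{Ad}_K\cdot\Box_{r_{\max}})$ with $U_{r_{\max}}N$. Since $\hat F$ is $K$-equivariant and intertwines the momentum maps, for any $\mathrm{Ad}_K$-invariant subset $A\subset\mathfrak{k}$ it restricts to a $K$-equivariant symplectomorphism between $\hat\mu_{N_\C}^{-1}(A)$ and $\mu_{TN}^{-1}(A)$. Taking $A=\mu_{N_\C}(N_\C\setminus\Delta)$, Lemma~\ref{momentum images} gives $A=\mathrm{Ad}_K\cdot\Box_{r_{\max}}$, so $\hat F$ restricts to a symplectomorphism $(N_\C\setminus\Delta,\omega_{KKS})\cong(\mu_{TN}^{-1}(\mathrm{Ad}_K\cdot\Box_{r_{\max}}),\dd\lambda)$.

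The remaining step is to check that $\mu_{TN}^{-1}(\mathrm{Ad}_K\cdot\Box_{r_{\max}})=U_{r_{\max}}N$. Here I would use the adjoint-orbit description from Section~\ref{rspaces}: under $TN\cong K\times_H\mathfrak{l}$, a class $[k,X]$ corresponds to $(\mathrm{Ad}_k\,o,\mathrm{Ad}_k[X,o])$ inside $\mathfrak{p}\times\mathfrak{p}$. Then
\[
\mu_{TN}([k,X])=[\mathrm{Ad}_k\,o,\mathrm{Ad}_k[X,o]]=\mathrm{Ad}_k\bigl[o,[X,o]\bigr]=\mathrm{Ad}_k X,
\]
where the last equality uses that $\mathrm{ad}_o$ realizes the complex structure $J_o$ on $\mathfrak{l}$, so $\mathrm{ad}_o^2=-\mathrm{id}$ there. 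Hence $\mu_{TN}([k,X])\in\mathrm{Ad}_K\cdot\Box_{r_{\max}}$ iff $X$ is $\mathrm{Ad}_K$-conjugate to an element of $\Box_{r_{\max}}\subset\mathfrak{a}$. Since $H$ acts transitively on maximal abelian subspaces of $\mathfrak{l}$ and the Weyl group of $(\mathfrak{k},\mathfrak{a})$ preserves $\Box_{r_{\max}}$, this is equivalent to $X\in\Box_{r_{\max}}$ up to $H$-action, i.e.\ $[k,X]\in U_{r_{\max}}N$. This gives the chain of equivalences displayed by the author and completes the proof.

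\textbf{Where the work sits.} Most of the heavy lifting has already been done: the existence of $\hat F$ (Theorem~\ref{symplectic completions}) and the explicit form of the momentum image (Lemma~\ref{momentum images}). The only remaining obstacle is the bookkeeping of identifications in the last step; the delicate point is matching $\Box_{r_{\max}}\subset\mathfrak{a}\subset\mathfrak{l}$ with $\Box_{r_0}\subset\bar{\mathfrak{a}}\subset\mathfrak{p}^\vee$ in the case $\mathrm{rk}(N_\C)=2\,\mathrm{rk}(N)$, which is exactly the content of the construction $\mathfrak{a}=\mathrm{Fix}(\tau|_{\bar{\mathfrak{a}}})$ and $\Sigma=\{\alpha+\alpha\circ\tau:\alpha\in\bar\Sigma\}$ from the proof of Lemma~\ref{momentum images}. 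Once this identification is absorbed, the equivalence at the end of Section~\ref{momentum} is essentially a tautology, and Corollary~\ref{symplectomorphism} follows.
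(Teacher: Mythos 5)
Your proposal is correct and follows essentially the same route as the paper: restrict the symplectomorphism $\hat F$ of Theorem~\ref{symplectic completions} using the intertwined momentum maps, invoke Lemma~\ref{momentum images} for the image $\mathrm{Ad}_K\cdot\Box_{r_{\max}}$, and identify $\mu_{TN}^{-1}(\mathrm{Ad}_K\cdot\Box_{r_{\max}})$ with $U_{r_{\max}}N$. Your explicit computation $\mu_{TN}([k,X])=\mathrm{Ad}_k X$ via $\mathrm{ad}_o^2=-\mathrm{id}$ just spells out the chain of equivalences the paper states at the end of Section~\ref{momentum}.
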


\section{Computing capacities}
In this section, we compute the Gromov width and Hofer--Zehnder capacity of \( (U_r N, \mathrm{d}\lambda) \). We begin by describing a Hamiltonian circle action on \( (N_{\mathbb{C}}, \omega_{\mathrm{KKS}}) \), which plays a central role in the subsequent computations.

\subsection{Embedding balls via Hamiltonian circle yctions}\label{Hamcircle}

As before, we realize \( N_{\mathbb{C}} \) as an adjoint orbit:
\[
N_{\mathbb{C}} \cong \operatorname{Ad}_{G^\vee}(K^\vee) \cdot \xi =: \mathcal{O}_\xi \subset \mathfrak{g}^\vee.
\]
This presentation yields a natural Hamiltonian circle action on \( (\mathcal{O}_\xi, \omega_{\mathrm{KKS}}) \), induced by the function
\[
H_{\xi} \colon \mathcal{O}_\xi \to \mathbb{R}, \quad a \mapsto 2\pi \langle \xi, a \rangle.
\]
The prefactor \( 2\pi \) ensures that the resulting flow has period 1. This normalization is consistent with \( \omega_{\mathrm{KKS}}(A) = 4\pi \) for any generator \( A \in H_2(N_{\mathbb{C}}, \mathbb{Z}) \) and $J_\xi=\mathrm{ad}_\xi$.\footnote{Indeed, \( H_2(N, \mathbb{Z}) \cong \mathbb{Z} \) for any irreducible Hermitian symmetric space; see Appendix~\ref{agenerator}.}

The Gromov width and Hofer--Zehnder capacity of Hermitian symmetric spaces of compact type have already been computed in previous works. See~\cite{LMZ13} for the Gromov width and~\cite{Cas20} for the Hofer--Zehnder capacity of more general coadjoint orbits.

\begin{theorem}[\cite{LMZ13, Cas20}]\label{capacities HSS}
Let \( (N_{\mathbb{C}}, \omega_{\mathrm{KKS}}) \) be a Hermitian symmetric space of compact type, normalized such that \( \omega_{\mathrm{KKS}}(A) = 4\pi \) on generators \( A \in H_2(N_{\mathbb{C}}, \mathbb{Z}) \). Then:
\[
c_G(N_{\mathbb{C}}, \omega_{\mathrm{KKS}}) = 4\pi \quad \text{and} \quad c_{HZ}(N_{\mathbb{C}}, \omega_{\mathrm{KKS}}) = 4\pi \cdot \mathrm{rk}(N_{\mathbb{C}}).
\]
\end{theorem}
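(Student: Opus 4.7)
The plan is to establish each equality by matching an upper and a lower bound, with the Hamiltonian circle action generated by $H_\xi$ serving as the central organizing tool.

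For the Gromov width, I would first prove the lower bound $c_G \geq 4\pi$ by constructing an explicit symplectic ball embedding from $H_\xi$. Since the flow of $H_\xi$ has period $1$ and a unique global minimum at $-\xi \in \mathcal{O}_\xi$, the equivariant Darboux theorem linearizes the $S^1$-action near $-\xi$ to the standard unitary action on $(\mathbb{C}^n, \omega_{\mathrm{std}})$; propagating this chart along the gradient flow lines of $H_\xi$ until it reaches the next critical level yields a symplectic ball of capacity arbitrarily close to $4\pi$, following the Karshon--Tolman/Pelayo philosophy for producing ball embeddings from Hamiltonian circle actions. The upper bound $c_G \leq 4\pi$ is then the classical Gromov non-squeezing argument: for any $\omega_{KKS}$-compatible almost complex structure, the positive generator $A \in H_2(N_\mathbb{C}, \mathbb{Z})$ is represented by a $J$-holomorphic sphere of symplectic area $4\pi$ through every point, and no wider ball can be embedded without exceeding this area.

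For the Hofer--Zehnder capacity, the lower bound $c_{HZ} \geq 4\pi \cdot \mathrm{rk}(N_\mathbb{C})$ comes from an explicit construction on the totally geodesic polysphere $P \cong (\mathbb{C}P^1)^{\mathrm{rk}(N_\mathbb{C})} \subset N_\mathbb{C}$. On $P$ the restriction of a maximal-torus moment map splits as a sum of rank-many standard height functions on the $\mathbb{C}P^1$-factors, each oscillating over an interval of length $4\pi$. Smoothing this sum and cutting it off inside a Weinstein neighborhood of $P$, then extending by a constant to the rest of $N_\mathbb{C}$, produces an admissible Hamiltonian in the Hofer--Zehnder sense with oscillation arbitrarily close to $4\pi \cdot \mathrm{rk}(N_\mathbb{C})$ and without any nonconstant periodic orbit of period at most $1$. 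The upper bound $c_{HZ} \leq 4\pi \cdot \mathrm{rk}(N_\mathbb{C})$ is the step I expect to be the main obstacle: unlike the previous three, it cannot be extracted from elementary symplectic geometry of the circle action and genuinely requires Floer-theoretic input, namely the spectral-invariant estimate of Caviedes \cite{Cas20}, which forces a nonconstant contractible $1$-periodic orbit whenever the Hamiltonian oscillation exceeds the spectral distance between the extremal weights of the maximal-torus representation on $N_\mathbb{C}$; in our normalization that distance equals exactly $4\pi \cdot \mathrm{rk}(N_\mathbb{C})$.
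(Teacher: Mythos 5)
Your overall strategy (organize everything around the circle action generated by $H_\xi$, get the Gromov width lower bound from a ball filling a sublevel set near the isolated minimum, and get upper bounds from holomorphic-curve/Floer input) is close in spirit to the paper, which however packages all four bounds at once by invoking the Hwang--Suh theorem for semifree Hamiltonian circle actions on closed Fano manifolds together with the two lemmas from \cite{B24} computing $\min$, $\mathrm{smin}$ and $\max$ of $H_\xi$ ($\mathrm{smin}-\min=4\pi$, $\max-\min=4\pi\,\mathrm{rk}(N_{\mathbb{C}})$). Your $c_G$ upper bound is also fine in principle, but note that ``a $J$-holomorphic sphere in class $A$ through every point for every compatible $J$'' is not free: it requires the nonvanishing one-point genus-zero Gromov--Witten invariant in the generating class (the paper cites \cite{CC16} for exactly this) plus the minimality/Fano condition to rule out bubbling.

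The genuine gap is your lower bound $c_{HZ}\geq 4\pi\cdot\mathrm{rk}(N_{\mathbb{C}})$. A Hamiltonian obtained by taking the sum of height functions on the polysphere $P\cong(\mathbb{C}P^1)^{\mathrm{rk}(N_{\mathbb{C}})}$, extending it over a tubular neighborhood of $P$ and cutting it off there, is in general \emph{not} admissible: writing the cutoff as $\chi\cdot h$ with $\chi$ depending on the normal coordinates, the Hamiltonian vector field acquires the term $h\,\mathrm{d}\chi$-part, which generates rotation in the normal directions with angular speed of order $h\cdot\chi'$; since $h$ has oscillation $\approx 4\pi\,\mathrm{rk}(N_{\mathbb{C}})$ and $\chi'$ is large on a thin neighborhood, this produces nonconstant periodic orbits of period well below $1$ in the cutoff region. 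This is precisely why the Hofer--Zehnder capacity of a small neighborhood of a symplectic submanifold cannot simply be bounded below by the capacity of the submanifold, and why the lower bound here is not an ``elementary'' construction near $P$: one needs the global function $H_\xi$ and the nontrivial inequality $\max H_\xi-\min H_\xi\leq c_{HZ}$ for semifree actions on Fano manifolds (Hwang--Suh, part of what the paper quotes as Theorem~\ref{hs13}), or alternatively the lower-bound half of the coadjoint-orbit computation in \cite{Cas20}. The difficulty your construction runs into --- critical levels of the reparametrized moment map containing non-fixed orbits, and cutoffs creating fast orbits --- is exactly what that theorem is designed to overcome, so at this step you should cite it (or reproduce its argument) rather than the neighborhood cutoff.
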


Both values can however pretty easily be seen in terms of the Hamiltonian circle action induced by $H_\xi$. We recap this proof from \cite{B24} as we need parts of it later. The key input comes from Hwang and Suh \cite[Thm.\ 1.1]{HS13}.
\begin{theorem}[\cite{HS13}]\label{hs13}
    Let $(M,\omega)$ be a closed Fano symplectic manifold with a semifree Hamiltonian circle action. The Gromov width and the Hofer--Zehnder capacity are estimated as
    \begin{itemize}
        \item[(a)] $c_G(M,\omega) \leq \max(H) - \min (H) \leq c_{HZ}(M,\omega).$
        \item[(b)] Further if $H_{min}$ is a point, then
        $$
            c_G(M,\omega) = \mathrm{smin}(H)-\min(H),\ \ c_{HZ}(M,\omega)=\max(H)-\min(H).
        $$
    \end{itemize}
\end{theorem}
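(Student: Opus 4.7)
The plan is to deduce all four inequalities (two in each part) from the interplay between the moment map $H$ and the equivariant symplectic geometry of $(M,\omega)$, dividing the work into admissible-Hamiltonian constructions (for $c_{HZ}$ lower bounds) and pseudoholomorphic-sphere arguments (for upper bounds and for the matching $c_G$ lower bound in (b)).

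The inequality $\max H - \min H \leq c_{HZ}(M,\omega)$ in (a) is obtained by an explicit construction. Since the $S^1$-action is normalized to period $1$, the Hamiltonian flow of $H' := f \circ (H - \min H)$ at a non-fixed point has period $1/f'$. Picking $f : [0, a] \to \R$ (with $a := \max H - \min H$) monotone, $f(0) = 0$, $f \equiv c$ near $a$, and $0 < f' < 1$ on the intermediate interval produces a Hofer--Zehnder admissible Hamiltonian with $\max H' = c$; letting $c \nearrow a$ gives the bound. The matching upper bound $c_G \leq \max H - \min H$ uses that, under the Fano and semifreeness hypotheses, there exists a non-trivial class $A \in H_2(M;\Z)$ whose $J$-holomorphic sphere representatives sweep out $M$ and have symplectic area $\omega(A) = \max H - \min H$. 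This area is computed by applying Stokes' theorem to an $S^1$-invariant sphere swept out by a gradient trajectory of $H$ from $H_{\min}$ to $H_{\max}$, using $dH = \iota_{X_H}\omega$. A Gromov-style non-squeezing argument applied to these spheres then bounds $c_G$ from above.

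For (b), assume $H_{\min} = \{p\}$. Semifreeness forces every isotropy weight at $p$ to lie in $\{\pm 1\}$, so the equivariant Darboux theorem identifies an open neighborhood of $p$ with an open subset of $(\C^n, \omega_{\mathrm{std}})$ carrying the standard rotation action. A standard ball of capacity strictly less than $\mathrm{smin}(H) - \min H$ embeds inside the sublevel set $\{H < \mathrm{smin}(H)\}$, yielding $c_G(M,\omega) \geq \mathrm{smin}(H) - \min H$. The matching upper bound is again pseudoholomorphic: one constructs an $S^1$-invariant $J$-sphere passing through $p$ of area $\mathrm{smin}(H) - \min H$, obtained by closing up a gradient trajectory of $H$ terminating at the critical set sitting at level $\mathrm{smin}(H)$. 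Finally, the bound $c_{HZ} \leq \max H - \min H$ in (b), combined with (a), forces the desired equality; this should follow from the non-vanishing of a Gromov--Witten-style count of $J$-spheres in the class $A$ through $p$, which via the standard Lu/Hofer--Viterbo machinery obstructs admissible Hamiltonians with $\max > \max H - \min H$.

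The principal obstacle I expect is the pseudoholomorphic-curve input: constructing the relevant $S^1$-invariant classes and verifying that the associated moduli spaces are non-empty with the expected Gromov--Witten counts. The Fano hypothesis controls bubbling, and semifreeness both normalizes the isotropy weights at the fixed loci and ensures the relevant moduli spaces are smooth of expected dimension; combining these ingredients into both the $c_G$ upper bound in (a) and the $c_{HZ}$ upper bound in (b) is where the technical weight of the argument lies.
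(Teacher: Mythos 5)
A preliminary remark on the comparison itself: the paper does not prove this statement --- it is imported verbatim from Hwang--Suh \cite{HS13} and used as a black box (the paper only re-derives its consequences for $N_\C$ via Lemma \ref{isolated minimum} and Lemma \ref{critical values}). So your proposal can only be measured against the argument of \cite{HS13}, whose overall architecture you have broadly reconstructed: the reparametrized moment map $f\circ(H-\min H)$ supplies the admissible Hamiltonians for $\max H-\min H\le c_{HZ}$ (note that semifreeness is exactly what makes this work: it guarantees every non-constant orbit of the reparametrized flow has period $1/f'>1$, whereas finite isotropy $\Z_k$ would produce orbits of period $1/(kf')$, possibly $\le 1$), and the upper bounds come from point-constrained pseudoholomorphic spheres fed into the Hofer--Viterbo/Lu mechanism.

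Two steps, however, are genuinely gapped as written. First, for $c_G\ge \mathrm{smin}(H)-\min(H)$ in (b), the equivariant Darboux theorem only identifies an arbitrarily small neighborhood of $p$ with standard $\C^n$; it does not yield balls of capacity approaching $\mathrm{smin}(H)-\min(H)$ inside $\{H<\mathrm{smin}(H)\}$. What is needed is the global statement that this sublevel set is centered at $p$ and symplectomorphic to (or exhausted by) standard balls of exactly that capacity; this is Karshon--Tolman \cite[Prop.~2.11]{KT05}, which is precisely the ingredient the present paper invokes when applying the theorem to $H_\xi$ on $N_\C$. Second, for the upper bounds, a single $S^1$-invariant gradient sphere through the fixed point $p$ of area $\mathrm{smin}(H)-\min(H)$ cannot bound $c_G$ from above: an embedded ball computing the Gromov width need not meet $p$, so one needs spheres of the relevant area through an \emph{arbitrary} point and for arbitrary (or at least generic) compatible $J$, i.e.\ a nonvanishing point-constrained Gromov--Witten-type count in the minimal gradient-sphere class, with the Fano and semifree hypotheses used to control bubbling and multiple covers. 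The same issue underlies your $c_G\le \max H-\min H$ in (a) and $c_{HZ}\le \max H-\min H$ in (b): you correctly flag this curve-theoretic input as the main obstacle, but it is not a final technicality to be assembled --- it is the substance of the theorem, obtained in \cite{HS13} through Seidel-element/equivariant curve-counting computations in the spirit of McDuff--Tolman. Without supplying it, the proposal establishes only the $c_{HZ}$ lower bound in (a) and, after replacing Darboux by the Karshon--Tolman argument, the $c_G$ lower bound in (b).
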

We will see that $H_\xi$ satisfies condition (b). Clearly $\xi\in\mathcal{O}_\xi$ is a critical point of $H_\xi$, indeed it is the minimum of $H_\xi$ and isolated.
\begin{lemma}[Lem.\ 4.6\ \cite{B24}]\label{isolated minimum}
The Hessian of $H_\xi$ at $p=\xi$ is positive definite, thus $p=\xi$ is an isolated local minimum. Indeed, $p=\xi$ is the global minimum.
\end{lemma}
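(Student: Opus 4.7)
The plan is to verify the second-order condition at $p=\xi$ by a direct expansion along an orbit curve, using the $\mathrm{ad}$-invariance of the Killing form, and then to upgrade the local statement to a global one by a norm-preservation/Cauchy--Schwarz argument.

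First I would identify tangent vectors at $\xi$: because the stabilizer of $\xi$ in $G^\vee$ is $K^\vee$, one has $[\mathfrak{k}^\vee,\xi]=0$, so $T_\xi N_{\mathbb{C}}=[\mathfrak{g}^\vee,\xi]=[\mathfrak{p}^\vee,\xi]$, and every tangent vector can be written as $v=[X,\xi]$ with $X\in\mathfrak{p}^\vee$. The curve $\gamma(t):=\mathrm{Ad}_{\exp(tX)}\xi$ lies in $\mathcal{O}_\xi$ and satisfies
\[
\gamma(t)=\xi+t[X,\xi]+\tfrac{t^2}{2}[X,[X,\xi]]+O(t^3).
\]
Plugging this into $H_\xi(a)=2\pi\langle\xi,a\rangle$ and twice applying $\langle [A,B],C\rangle=\langle A,[B,C]\rangle$ yields $\langle\xi,[X,\xi]\rangle=0$ (so $\xi$ is a critical point) and
\[
\langle\xi,[X,[X,\xi]]\rangle=\langle[\xi,X],[X,\xi]\rangle=-\langle[X,\xi],[X,\xi]\rangle.
\]
Consequently $\mathrm{Hess}_\xi H_\xi(v,v)=-2\pi\langle v,v\rangle$. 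Since $\mathfrak{g}^\vee$ is a compact real form, the Killing form is negative definite on it, so the Hessian is positive definite, and $\xi$ is in particular an isolated local minimum.

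For the global minimum I would argue as follows: the adjoint action preserves the Killing form, so $\langle a,a\rangle=\langle\xi,\xi\rangle$ for every $a\in\mathcal{O}_\xi$. Taking $g:=-\langle\cdot,\cdot\rangle$ as the positive-definite inner product on $\mathfrak{g}^\vee$, the Cauchy--Schwarz inequality gives $g(\xi,a)\le\|\xi\|_g\|a\|_g=\|\xi\|_g^2$, with equality only if $a=\xi$ (the constraint $\|a\|_g=\|\xi\|_g$ rules out $a=-\xi$). Translating back, $H_\xi(a)=-2\pi g(\xi,a)\ge -2\pi\|\xi\|_g^2=H_\xi(\xi)$, with equality exactly at $a=\xi$. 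The only real pitfall in the whole argument is sign bookkeeping: the Killing form is negative definite on $\mathfrak{g}^\vee$, and both ``positive definite Hessian'' and ``global minimum'' come out only if one consistently treats $\langle\cdot,\cdot\rangle$ as the Killing form itself rather than its negative; once that convention is fixed, both assertions reduce to the two short computations above and use no facts beyond $\mathrm{ad}$-invariance of $\langle\cdot,\cdot\rangle$.
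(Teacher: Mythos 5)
The paper does not reproduce a proof of this lemma at all --- it imports it verbatim from \cite{B24} (Lem.~4.6 there) --- so there is nothing in-text to compare against; judged on its own, your argument is correct and self-contained. The computation of the Hessian along $\gamma(t)=\mathrm{Ad}_{\exp(tX)}\xi$ is legitimate (at a critical point the second derivative along a curve depends only on $\gamma'(0)$), the identification $T_\xi\mathcal{O}_\xi=[\mathfrak{p}^\vee,\xi]$ is right, and the global statement via $\mathrm{Ad}$-invariance of the norm plus Cauchy--Schwarz is a clean way to get both the global minimum and its uniqueness on the orbit, which in particular re-derives isolatedness; this is arguably more elementary than the usual route of diagonalizing $\mathrm{ad}_\xi$ on root spaces. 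Two small remarks. First, your sign caveat is exactly the right one to flag: with the paper's stated convention that $(\cdot,\cdot)$ is the \emph{positive} inner product induced by the Killing form, $H_\xi(a)=2\pi\langle\xi,a\rangle$ would make $\xi$ the global \emph{maximum}, so one must read $\langle\cdot,\cdot\rangle$ as the (negative definite) Killing form itself, as you do, for the statement ``minimum'' to come out; the substance of the lemma (nondegenerate isolated global extremum at $\xi$) is unaffected. Second, the parenthetical justification of the equality case is slightly off: what excludes $a=-\xi$ is not the norm constraint (which $-\xi$ satisfies) but the fact that equality in $g(\xi,a)\le\|\xi\|_g\|a\|_g$ forces $a=\lambda\xi$ with $\lambda\ge 0$; the norm constraint then gives $\lambda=1$. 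This is cosmetic and does not affect the conclusion.
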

Further we know the level sets, where critical points lie.
\begin{lemma}[Lem.\ 4.7\ \cite{B24}]\label{critical values}
    The Hamiltonian $H_\xi$ satisfies
    $$
    \max(H_\xi)-\min(H_\xi)=4\pi \cdot \mathrm{rk}(N_\C),\ \ \mathrm{smin}(H_\xi)-\min(H_\xi)=4\pi,
    $$
where $\mathrm{smin}(H_\xi)$ denotes the second lowest value of $H_\xi$ at a critical point.
\end{lemma}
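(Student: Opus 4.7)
The strategy is to reduce the computation to the totally geodesic polysphere $P \cong (\mathbb{C}P^1)^r$ through $\xi$, where $r = \mathrm{rk}(N_\C)$, on which $H_\xi$ decomposes as a sum of standard rotation Hamiltonians. The main task is then to show that the extremes and near-extremes on $P$ coincide with those on all of $N_\C$.

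First I would set up the polysphere using strongly orthogonal positive non-compact roots $\gamma_1, \ldots, \gamma_r$ and their $\mathfrak{su}(2)$-triples $(X_{\gamma_i}, Y_{\gamma_i}, H_{\gamma_i})$; the associated totally geodesic polysphere passes through $o = \xi$. Expressing $\xi$ in terms of the $H_{\gamma_i}$ (it is dual to the sum of the $\gamma_i$ up to normalization), the restriction $H_\xi|_P$ splits as $H_\xi(\xi) + \sum_{i=1}^r h_i$, where each $h_i$ is the standard $S^1$-rotation Hamiltonian on the $i$-th $\mathbb{C}P^1$ factor. The normalization $\omega_{\mathrm{KKS}}(A) = 4\pi$ on generators of $H_2$ forces each $h_i$ to take exactly two critical values differing by $4\pi$. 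Hence on $P$ the $2^r$ fixed points have $H_\xi$-values $H_\xi(\xi) + 4\pi|S|$ for $S \subset \{1, \ldots, r\}$, which gives immediately $\min|_P = H_\xi(\xi)$, $\max|_P = H_\xi(\xi) + 4\pi r$, and second lowest value $H_\xi(\xi) + 4\pi$.

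Next I would promote these from $P$ to all of $N_\C$. The global minimum is $\xi$ by Lemma~\ref{isolated minimum}. Critical points of $H_\xi$ on $N_\C$ are those elements of $\mathcal{O}_\xi$ commuting with $\xi$; since $\xi \in Z(\mathfrak{k}^\vee)$ and $\mathrm{ad}_\xi$ is invertible on $\mathfrak{p}^\vee$, this critical set equals $\mathcal{O}_\xi \cap \mathfrak{k}^\vee$. Because $\mathrm{Ad}(K^\vee)$ fixes $\xi$, the function $H_\xi$ is constant on each $K^\vee$-orbit in this set, and each such orbit has a representative in a fixed Cartan subalgebra containing $\xi$. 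Thus the critical values are parametrized by the Weyl-group quotient $W_{G^\vee}/W_{K^\vee}$ acting on $\xi$, and the Harish-Chandra analysis of Hermitian symmetric spaces shows that all such representatives can be realized on the polysphere $P$ by applying subsets of the reflections $s_{\gamma_i}$. Combining this with the polysphere computation yields the two desired equalities.

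The main obstacle is verifying that every critical $K^\vee$-orbit on $N_\C$ is represented on the polysphere --- for the global maximum this is the structural fact that the longest nontrivial Weyl translate of $\xi$ in $\mathcal{O}_\xi$ is realized by applying all of the reflections $s_{\gamma_i}$, which requires the Harish-Chandra decomposition of $\mathfrak{p}^\vee$ into root spaces indexed by the $\gamma_i$. For the $\mathrm{smin}$ claim, a cleaner alternative bypasses the Weyl-group combinatorics: the circle action is semifree with all weights $+1$ at $\xi$ (since $\mathrm{ad}_\xi = J_o$ is the complex structure on $T_\xi N_\C$), so the gap to the next critical level equals the minimal symplectic area of a holomorphic sphere through $\xi$, which is $4\pi$ by the normalization of $\omega_{\mathrm{KKS}}$.
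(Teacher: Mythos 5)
You should first note that the paper does not actually prove this lemma: it is imported verbatim from \cite[Lem.~4.7]{B24}, and only the supporting structure (strongly orthogonal roots, the polysphere $P$, the normalization $\omega_{\mathrm{KKS}}(A)=4\pi$, and the covering of $N_{\mathbb{C}}$ by the orbit of $P$) is recalled here. Your overall strategy --- restrict $H_\xi$ to the polysphere through $\xi$, where it splits into $r=\mathrm{rk}(N_{\mathbb{C}})$ period-one rotation Hamiltonians each of spread $4\pi$, and then promote the extremal and near-extremal values to all of $N_{\mathbb{C}}$ --- is exactly the route of the cited proof, so in outline you reproduce the intended argument.

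The step I would not accept as written is the promotion step. The claim that ``all such representatives can be realized on the polysphere $P$ by applying subsets of the reflections $s_{\gamma_i}$'' is false at the level of orbit representatives: already for $N_{\mathbb{C}}=\mathbb{CP}^n$ the Weyl orbit $W\cdot\xi$ has $n+1$ elements, while $P$ has only $2^r=2$ torus-fixed points. What you actually need is a statement about \emph{values}: since $\xi$ is cominuscule ($\mathrm{ad}_\xi=J_o$ has eigenvalues $0,\pm i$), every pairing $\langle\xi,\xi-w\xi\rangle$ is an integer multiple of the basic quantum, so all critical values lie in $\min(H_\xi)+4\pi\,\mathbb{Z}_{\geq 0}$; this gives $\mathrm{smin}-\min\geq 4\pi$, while the point $s_{\gamma_1}\xi$ (which lies in the same Cartan as $\xi$, hence commutes with it and is a genuine critical point) gives the reverse inequality. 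For the maximum, the quantization alone does not bound $\langle\xi,\xi-w\xi\rangle$ above by $r$ quanta; the clean argument is the polysphere theorem $N_{\mathbb{C}}=K^\vee\cdot P$ together with $\mathrm{Ad}(K^\vee)$-invariance of $H_\xi$, which forces the maximum and minimum over $N_{\mathbb{C}}$ to be attained on $P$ --- note that Section~\ref{orbits at infinity} records precisely this covering fact, so it is available to you without any Weyl-group combinatorics. Finally, your alternative argument for $\mathrm{smin}$ is fine in spirit but is stated as an equality that itself needs both directions: ``$\geq$'' requires the semifree gradient-sphere argument (a critical point on the second level is joined to $\xi$ by a weight-one gradient sphere whose area equals the gap, and any holomorphic sphere has area in $4\pi\mathbb{Z}_{>0}$), and ``$\leq$'' requires exhibiting the explicit $4\pi$-sphere, which a polysphere factor supplies.
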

These two lemmas together with Theorem \ref{hs13} prove Theorem \ref{capacities HSS}.

\subsection{Capacities of $U_rN$}\label{capacities UN}
We will now use the symplectic identification of Corollary \ref{symplectomorphism} to compute the Gromov width and the Hofer--Zehnder capacity of the $U$-neighborhoods.
\begin{theorem}
    Denote by \( \mathrm{sys} \) the length of the shortest closed geodesic of the symmetric \( R \)-space \( N \). Then
    $$
    c_G(U_1N, \dd\lambda) = c_{HZ}(U_1N, \dd\lambda) =
    \begin{cases} 
        \mathrm{sys}, & \text{if } \mathrm{rk}(N_\mathbb{C}) = 2 \cdot  \mathrm{rk}(N), \\ 
        2 \cdot \mathrm{sys}, & \text{if } \mathrm{rk}(N_\mathbb{C}) = \mathrm{rk}(N).
    \end{cases}
    $$
\end{theorem}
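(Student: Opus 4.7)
The strategy is to translate the question to $N_\C$ via Corollary~\ref{symplectomorphism} and then deploy the Hamiltonian-circle-action techniques of Section~\ref{Hamcircle}. Because the symplectomorphism $\widehat{F}$ intertwines the two momentum maps, $U_1N$ corresponds to the open $K$-invariant subset
\[
V := \mu_{N_\C}^{-1}(\mathrm{Ad}_K\cdot \Box_1) \ \subset\ N_\C\setminus\Delta,
\]
so the theorem reduces to computing $c_G(V,\omega_{KKS})$ and $c_{HZ}(V,\omega_{KKS})$.

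Both capacities will be extracted from the Hamiltonian $H_\xi=2\pi\langle\xi,\cdot\rangle$ of Section~\ref{Hamcircle}, whose flow is $1$-periodic on $N_\C$ and whose global minimum is the isolated point $\xi\in N\subset V$ (Lemma~\ref{isolated minimum}). For the Hofer--Zehnder upper bound, build an admissible Hamiltonian on $V$ of the form $F=f(H_\xi)$ with $f$ a smooth cutoff satisfying $0\le f'<1$, vanishing below $\min H_\xi+\varepsilon$ and above $\max_V H_\xi-\varepsilon$, and equal to a plateau in between; the condition $|f'|<1$ combined with the $1$-periodicity of the $H_\xi$-flow rules out fast non-trivial periodic orbits of $F$, yielding $c_{HZ}(V)\le \max_V H_\xi-\min H_\xi$ in the limit $\varepsilon\to 0$. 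For the Gromov-width lower bound, apply the equivariant Darboux theorem at $\xi$: it produces a local symplectomorphism onto a neighborhood of the origin in $(\mathbb{C}^n,\omega_{\mathrm{std}})$ conjugating $H_\xi-\min H_\xi$ to $\pi|z|^2$, and this chart extends globally on the sublevel set $\{H_\xi<\mathrm{smin}(H_\xi)\}$, which lies strictly above $\max_V H_\xi$ in our regime. The key geometric claim, to be proved using $K$-equivariance together with the polysphere reduction, is that $V$ contains the full sublevel set $\{H_\xi\le \max_V H_\xi-\varepsilon\}$; this then yields a symplectic ball embedding of size $\max_V H_\xi-\min H_\xi$ into $V$, hence $c_G(V)\ge \max_V H_\xi-\min H_\xi$.

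The remaining, and main technical, step is the identity
\[
\max_V H_\xi-\min H_\xi \ = \ \begin{cases}\mathrm{sys}, & \mathrm{rk}(N_\C)=2\cdot\mathrm{rk}(N),\\[2pt] 2\cdot\mathrm{sys}, & \mathrm{rk}(N_\C)=\mathrm{rk}(N).\end{cases}
\]
The plan is to reduce to the totally geodesic polysphere $P=(\mathbb{C}P^1)^{\mathrm{rk}(N_\C)}\subset N_\C$ through $o$ used in Section~\ref{orbits at infinity}: on $P$, the function $H_\xi$ splits as a sum of standard height functions on the $\mathbb{C}P^1$-factors and $V\cap P$ is a toric subdomain explicitly determined by $\Box_1\subset\mathfrak{a}\subset\bar{\mathfrak{a}}$. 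By $K$-equivariance and the transitivity of $H$ on maximal flats through $o$, the global maximum of $H_\xi$ on $V$ coincides with its maximum on $V\cap P$, reducing the computation to a toric height count whose outcome is dictated by whether $\bar{\mathfrak{a}}=\mathfrak{a}$ (yielding $2\cdot\mathrm{sys}$) or $\bar{\mathfrak{a}}=\mathfrak{a}\oplus\tau\mathfrak{a}$ (yielding $\mathrm{sys}$). I expect the main obstacle to be this final step --- matching the polysphere height to the systole of $(N,g)$: it requires consistent bookkeeping of the factor $2\pi$ in $H_\xi$, the KKS normalization $\omega_{KKS}(A)=4\pi$ on generators of $H_2$, the relation between the Killing form and the invariant metric $g$, and the description of the shortest closed geodesic of $N$ as a shortest non-trivial element of the appropriate period lattice in $\mathfrak{a}$; Theorem~\ref{capacities HSS} provides a useful sanity check in the limiting regime $r=r_{\max}$.
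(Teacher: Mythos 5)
Your starting point (reduce to $N_\C$ via Corollary~\ref{symplectomorphism} and exploit the circle action generated by $H_\xi$) matches the paper's, but both of your main steps contain genuine gaps. First, the Hofer--Zehnder upper bound: exhibiting one admissible Hamiltonian $F=f(H_\xi)$ with no fast periodic orbits proves only $c_{HZ}\ge \operatorname{osc}(F)$; an upper bound requires showing that \emph{every} Hamiltonian with larger oscillation has a fast closed orbit, which no cut-off construction can deliver. The failure is not merely formal: when $\mathrm{rk}(N_\C)=\mathrm{rk}(N)$ the set $V\cong N_\C\setminus\Delta$ is open and dense, so $\sup_V H_\xi-\min H_\xi=4\pi\cdot\mathrm{rk}(N_\C)$ by Lemma~\ref{critical values}, which for $\mathrm{rk}(N_\C)>1$ (e.g.\ $N=\mathrm{Gr}_\R(p,p+q)$, $p\ge 2$) strictly exceeds the correct answer $2\cdot\mathrm{sys}=4\pi$; hence the identity you pose as the ``main technical step'' is false, and the oscillation of $H_\xi$ over $V$ simply does not compute the capacity. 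The paper's upper bound is instead a pseudo-holomorphic curve argument: Lu's criterion applied with the nonvanishing Gromov--Witten invariant $\mathrm{GW}_{A,0,2}([pt.],\alpha)=1$ of $N_\C$, together with $A\cdot[\Delta]\neq 0$ (forced by exactness of $\omega_{KKS}$ on $N_\C\setminus\Delta$ via Stokes), gives $c_{HZ}(N_\C\setminus\Delta,\omega_{KKS})\le\omega_{KKS}(A)=4\pi$; this holomorphic input is exactly what detects the drop from $c_{HZ}(N_\C)=4\pi\,\mathrm{rk}(N_\C)$ to $4\pi$ after removing $\Delta$, and it is absent from your proposal. (The paper also handles the case distinction differently: fiberwise dilation by $r_{\max}=\mathrm{rk}(N_\C)/\mathrm{rk}(N)$ rescales $\dd\lambda$, so everything reduces to showing both capacities of $U_{r_{\max}}N\cong N_\C\setminus\Delta$ equal $4\pi$ under the normalization $\mathrm{sys}=2\pi$.)

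Second, your Gromov-width lower bound hinges on the ``key geometric claim'' that $V$ contains the sublevel set $\{H_\xi\le\max_V H_\xi-\varepsilon\}$, and this is false. Already for $N=\mathrm{S}^1$, $N_\C=\cp^1$, the set $\Delta$ consists of the two $K$-fixed points, which sit at an intermediate $H_\xi$-level and therefore lie inside every such sublevel set while being excluded from $V=N_\C\setminus\Delta$; in general there is no reason for $\Delta$ to avoid the ball $\{H_\xi<\mathrm{smin}(H_\xi)\}$ provided by the Karshon--Tolman identification, so that ball does not embed into $V$ directly. The paper repairs precisely this point: after noting $\xi\notin\Delta$, it uses the gradient flow $\phi_T$ of $H_\xi$ to push $\Delta$ out of the slightly smaller ball $\{H_\xi<\pi(2-\varepsilon)^2\}$, and then, because the gradient flow is holomorphic so that $\Delta$ remains a union of symplectic submanifolds for the whole family $\phi_T^*\omega_{KKS}$, applies a Moser trick \emph{relative to} $\Delta$ to identify $(N_\C\setminus\Delta,\phi_T^*\omega_{KKS})\cong(N_\C\setminus\Delta,\omega_{KKS})$; letting $\varepsilon\to0$ yields $c_G\ge 4\pi$. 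Without this flow-plus-relative-Moser step (or a substitute) your ball embedding does not exist, and without the Gromov--Witten input your upper bound does not exist; both would have to be added, and the false oscillation identity discarded, for the proposal to become a proof.
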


\begin{proof}
    The normalization $\omega_{KKS}(A)=4\pi=4\pi R^2$ implies $\mathrm{sys}= 2\pi=2\pi R$. So that the theorem reads
    $$
     c_G(U_rN,\dd\lambda)=c_{HZ}(U_rN,\dd\lambda)=4\pi,\ \ r=\frac{\mathrm{rk}(N_\C)}{\mathrm{rk}(N)}.
    $$
    We will prove this Theorem by showing independently that $4\pi\leq c_G$ and $c_{HZ}\leq 4\pi$. For the first inequality we will use the gradient flow of $H_\xi$ (from the previous section) to push $\Delta$ into the complement of balls of size approaching the Gromov width of $N_{\C}$ and obtain an embedding into $U_rN\cong N_\C\setminus\Delta$ with a relative Moser trick. The upper bound follows from a theorem by Lu \cite{Lu06} using a non-vanishing Gromov--Witten invariant of $N_\C$.\\
    \ \\
    \underline{\textit{Lower bound:}} The sublevel set $\lbrace H_\xi < 4\pi\rbrace\subset N_\C$ is symplectomorph to the open ball $B_2^{2n}$ of capacity $4\pi$ \cite[Prop.\ 2.11]{KT05} as the minimum of $H_\xi$ is isolated and the next critical value is $4\pi$ (cf.\ Lem. \ref{isolated minimum}, Lem. \ref{critical values}). Further the center of the ball is mapped to the minimum $\xi$. We may assume that $\xi\notin \Delta$, this implies that for large $T$ the gradient flow $\phi_T$ of $H_\xi$ pushes $\Delta$ into the complement of a slightly smaller ball $B_{2-\varepsilon}^{2n}\cong \lbrace H_\xi <\pi(2-\varepsilon)^2\rbrace $, i.e. 
    $$
   \exists T>0\ \mathrm{s.t.}\  \phi_T(\Delta)\subset N_\C\setminus B_{2-\varepsilon}^{2n}.
    $$
    Vice versa we obtain a symplectic embedding 
    $$
    (B^{2n}_{2-\varepsilon},\omega_0)\hookrightarrow (N_\C\setminus\phi_T(\Delta),\omega_{KKS})\cong (N_\C\setminus\Delta,\phi_T^*\omega_{KKS}).
    $$
    By Lemma \cite[Lem.\ 4.11]{B24} the gradient flow of $H_\xi$ is holomorphic which implies that $J$ is compatible with $\phi_T^*(\sigma\ominus R\sigma)$ for all $T$. Therefor $\Delta$ is a finite union of closed symplectic submanifolds for all symplectic structures in the family $\phi_T^*\omega_{KKS}$ and we can apply Moser's trick relative to $\Delta$ to identify 
    $$
    (N_\C\setminus\Delta,\phi_T^*\omega_{KKS})\cong (N_\C\setminus\Delta,\omega_{KKS}).
    $$
    Letting $\varepsilon\to 0$ we obtain the desired lower bound for the Gromov width.\\
    \ \\
    \underline{\textit{Upper bound:}} For the upper bound we use a theorem by Lu (cf.\ \cite[Thm.\ 1.10]{Lu06}) or rather a corollary of it (cf.\ \cite[Cor.\ A.1]{Bim23}), that says
    \begin{equation}\label{lu}
    \mathrm{GW}_{A,g,m+2}^{(M,\omega)}([pt.], [\Sigma],\ldots)\neq 0\Rightarrow c_{HZ}(M\setminus\Sigma,\omega)\leq \omega(A).
    \end{equation}
    Here, $(M,\omega)$ is closed symplectic, $A\in H_2(M,\Z)$, $g$ denotes the genus of the curves, $m+2$ is the number of marked points and $\Sigma\subset M$ is a (finite union of) submanifolds, representing a homology class $[\Sigma]\in H_*(M,\Q)$.\\
    In our case $(M,\omega)=(N_\C,\omega_{KKS})$, $A\in H_2(N_\C,\Z)$ is a generator\footnote{For an irreducible Hermitian symmetric space of compact type $H_2(N_\C,\Z)\cong \Z$. A proof of this fact is sketched in Appendix \ref{agenerator}.}, $g=0$, $\Sigma=\Delta$. As proved in \cite[Thm.\ 5.2]{CC16}, we indeed have a non-vanishing Gromov--Witten invariant
    $$
    \mathrm{GW}_{A,0,2}^{(N_\C,\omega_{KKS})}([pt.],\alpha)=1,
    $$
    for some class $\alpha\in H_*(N_\C,\Q)$. Since $\omega_{KKS}\vert_{N_\C\setminus\Delta}$ is exact it follows that the intersection number $A\cdot [\Delta]\neq 0$. Otherwise the energy of any curve $u:\cp^1\to N_\C,\ [u]=A$ must be zero $\int_{\cp^1} u^*\omega_{KKS}=0 $ by Stokes theorem, a contradiction as $\omega_{KKS}(A)=4\pi$ by normalization. But this implies that also 
    $$
    \mathrm{GW}_{A,0,2}^{(N_\C,\omega_{KKS})}([pt.],\alpha,[\Delta])=\mathrm{GW}_{A,0,2}^{(N_\C,\omega_{KKS})}([pt.],\alpha)\left (A\cdot[\Delta]\right)\neq 0.
    $$
    The desired upper bound follows from \eqref{lu}.
\end{proof}
\subsection{Capacities of $D_rN$}
This finishes the computation of Gromov width and Hofer--Zehnder capacity of the $U$-neighborhoods, but what about disc tangent bundles? Disc bundles are of particular interest, as they are sub level sets of the kinetic Hamiltonian $E(x,v)=\frac{1}{2}\vert v\vert_x^2$ and since $E$ generates the geodesic flow this reveals a deep connection to Riemannian geometry. If $N$ is simply connected we easily obtain the following corollary of Theorem \ref{capacities UN}.

\begin{theorem}
    If \( N \) is simply connected, then
    $
    c_{HZ}(D_1N, \dd\lambda) = \mathrm{sys}.
    $
\end{theorem}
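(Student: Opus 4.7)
The plan is to sandwich $c_{HZ}(D_1 N, \dd\lambda)$ between $\mathrm{sys}$ and itself via complementary bounds: the upper bound will follow from the previous subsection's capacity computation combined with monotonicity of $c_{HZ}$ under open inclusion, while the lower bound is the standard kinetic-energy construction for disc (co)tangent bundles.

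For the upper bound I would first establish the set-theoretic inclusion $D_1 N \subseteq U_1 N$ as open subsets of $TN$. By $K$-equivariance and the transitivity of $H$ on the maximal flats through $o$, it suffices to verify this for vectors of the form $(o, X)$ with $X \in \mathfrak{a}$. Using the orthogonal root space decomposition of $\mathfrak{g}^\vee$, the Riemannian norm induced by the Killing form expands as $|X|_g^2 = \sum_{\alpha \in \Sigma} d_\alpha\, \alpha(X)^2$ with positive root multiplicities $d_\alpha$, and therefore dominates $\max_\alpha \alpha(X)^2$. Hence $|X|_g < 1$ forces $|\alpha(X)| < 1$ for every $\alpha \in \Sigma$, which is precisely the condition defining $U_1 N$. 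Monotonicity of $c_{HZ}$ combined with the capacity theorem of the previous subsection (applicable since $N$ simply connected is equivalent to $\mathrm{rk}(N_\C) = 2\,\mathrm{rk}(N)$) then yields $c_{HZ}(D_1 N, \dd\lambda) \leq c_{HZ}(U_1 N, \dd\lambda) = \mathrm{sys}$.

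For the lower bound I would use the classical kinetic-energy construction: set $E(x,v) = \tfrac{1}{2}|v|_x^2$ and consider admissible Hamiltonians of the form $H = f(E)$, with $f \colon [0, 1/2] \to \R_{\geq 0}$ smooth, nondecreasing, vanishing near $0$, and constant near $1/2$. Since $X_H = f'(E) X_E$, the Hamiltonian flow is a reparametrization of the geodesic flow on each energy level, so any non-constant $1$-periodic orbit of $X_H$ at energy $E$ projects to a closed geodesic on $N$ of length $f'(E) \sqrt{2E}$. Simply-connectedness of $N$ forces every closed geodesic to have length at least $\mathrm{sys}$, so any $f$ obeying $f'(E) \sqrt{2E} < \mathrm{sys}$ on $(0, 1/2)$ is admissible. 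Letting $f'(E)$ approach $\mathrm{sys}/\sqrt{2E}$ from below drives
\[
\max H \;=\; f(1/2)\;\longrightarrow\; \int_0^{1/2} \frac{\mathrm{sys}}{\sqrt{2E}}\, dE \;=\; \mathrm{sys},
\]
and hence $c_{HZ}(D_1 N, \dd\lambda) \geq \mathrm{sys}$.

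The main obstacle is reconciling the Killing form normalizations so that the inclusion $D_1 N \subseteq U_1 N$ holds on the nose rather than up to a constant. Once the normalization underlying $\mathrm{sys} = 2\pi R$ is matched with the one implicit in the definition of $U_1 N$ (and with $\omega_{\mathrm{KKS}}(A) = 4\pi R^2$), the inclusion is automatic from the positivity of root multiplicities, and the rest is routine given the preceding theorem.
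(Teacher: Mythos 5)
Your proposal is correct and follows the paper's proof essentially verbatim: the upper bound is the same inclusion $D_1N \subset U_1N$ (which the paper simply asserts) combined with monotonicity and the capacity of $U_1N$ under the simply-connected rank dichotomy, and your lower bound via $H=f(E)$ with $f'(E)\sqrt{2E}<\mathrm{sys}$ is exactly a smoothing of the paper's Hamiltonian $\mathrm{sys}\cdot|v|_x=\mathrm{sys}\sqrt{2E}$, modified near the zero section and the boundary. The only point to keep straight is the one you already flag: the metric is normalized so that $\mathrm{sys}=2\pi$ (matching $\omega_{\mathrm{KKS}}(A)=4\pi$), and under this normalization one still has $|X|_g\geq\max_{\alpha\in\Sigma}|\alpha(X)|$ on $\mathfrak{a}$, so the inclusion $D_1N\subseteq U_1N$ goes through.
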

\begin{proof}
    The upper bound immediately follows from Theorem \ref{capacities UN}, as $D_1N\hookrightarrow U_1N$. The lower bound is obtained modifying the Hamiltonian $H(x,v)=\mathrm{sys}\cdot \vert v\vert_x$ slightly near the zero section and near the boundary to obtain an admissible Hamiltonian (cf. \cite[Sec.\ 4]{Bim24.2}).
\end{proof}
We do not know the value of the Gromov width. Indeed, if $\mathrm{rk}(N)>1$ the ball we embedded into $N_\C$ as sublevel set of $H_\xi$, does not lie inside the disc bundle. \\
If $N$ is not simply connected, it is not even clear what the Hofer--Zehnder capacity should be. For real projective spaces $\rp^n$ (the only non-simply connected symmetric $R$-spaces of rank 1), the $U$-neighborhoods are disc bundles, thus 
$$
c_{HZ}(D_1\rp^n,\dd\lambda)=2\cdot\mathrm{sys}.
$$
On the other hand for real quadrics $Q_{p,q}(\R)$, it is not to hard to prove that 
$$
c_{HZ}(D_1Q_{p,q}(\R),\dd\lambda)=\sqrt{2}\cdot\mathrm{sys}.
$$
This is the length of the shortest contractible geodesic. For the upper bound observe that the universal covering is the double cover
$$
\mathrm{S}^p\times \mathrm{S}^q\to Q_{p,q}(\R)\cong \mathrm{S}^p\times \mathrm{S}^q/\Z_2,
$$
where $\Z_2$ acts via the diagonal antipodal map. Any shortest closed geodesic on $Q_{p,q}(\R)$ lifts to a diagonal geodesic segment joining antipodal points $(\pm a,\pm a)$. If we normalize the spheres to have radius one, this shortest closed geodesic has length $\sqrt{2}\pi$. 
Since, 
 $$
 D_1(\mathrm{S}^p\times \mathrm{S}^q)\subset D_1\mathrm{S}^p\times D_1 \mathrm{S}^q\cong Q^p\setminus Q^{p-1}\times Q^q\setminus Q^{q-1}
 $$
 we obtain 
 $$
 c_{HZ}(D_1Q_{p,q}(\R),\dd\lambda)\leq c_{HZ}(Q^p\setminus Q^{p-1}\times Q^q\setminus Q^{q-1},\frac{1}{2}(\omega_{KKS}\oplus\omega_{KKS}))\leq 2\pi=\sqrt{2}\cdot\mathrm{sys}.
 $$
The lower bound is obtained by approximating billiards on the product of two hemispheres in $\mathrm{S}^p\times \mathrm{S}^q$. The precise construction is completely analogous to the construction of billiards on a hemisphere carried out in \cite[Sec.\ 4]{Bim24.2} and is therefore omitted here. Note, that the shortest bounce orbit is the two bounce orbit, when the geodesic in one factor is constant. The length of such an orbit is $2\pi=\sqrt{2}\cdot\mathrm{sys}$.
\appendix
\section{The biholomorphism $TN\cong K^\C/H^\C$}\label{biholo}
The following biholomorphism is rather standard and appears in many places, though often implicitly. For example, as an equivariant diffeomorphism, it can be found in \cite{Tum23,AG90, Gea06}, and as a biholomorphism, it appears in \cite{BHH03, HI03}. For convenience, we include a sketch of the proof here.  

\begin{theorem}[\cite{AG90, HI03}] \label{complex identification tangent bundle}  
The space \( K^\C/H^\C \) is \( K \)-equivariantly biholomorphic to \( TN \), equipped with an adapted complex structure.  
\end{theorem}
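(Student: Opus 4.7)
The plan is to construct the map explicitly, verify bijectivity via a Mostow/polar-type decomposition, and then check that the complex structure on $K^\C/H^\C$ pulled back to $TN$ satisfies the defining property of an adapted complex structure along geodesics.

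First, I would define
\[
\Phi \colon K \times_H \mathfrak{l} \longrightarrow K^\C / H^\C, \qquad [k, X] \longmapsto k \exp(iX) H^\C,
\]
and check well-definedness: for $h \in H$ we have $h \exp(iX) h^{-1} = \exp(i\mathrm{Ad}_h X)$, so $(kh^{-1})\exp(i\mathrm{Ad}_h X) H^\C = k \exp(iX) h^{-1} H^\C = k \exp(iX) H^\C$. The map is manifestly $K$-equivariant for the left $K$-action.

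Next, I would show $\Phi$ is a diffeomorphism. Surjectivity follows from the polar decomposition $K^\C = K \cdot \exp(i\mathfrak{k})$ (a standard fact for compact real forms of complex reductive groups due to Mostow), combined with the Cartan decomposition $\mathfrak{k} = \mathfrak{h} \oplus \mathfrak{l}$ and the analogous decomposition $H^\C = H \cdot \exp(i\mathfrak{h})$: any coset in $K^\C/H^\C$ has a representative of the form $k \exp(iX)$ with $X \in \mathfrak{l}$. Injectivity — uniqueness of this decomposition modulo the $H$-action $(k,X) \mapsto (kh^{-1}, \mathrm{Ad}_h X)$ — is the companion statement from the Mostow decomposition applied to the complex symmetric space $K^\C/H^\C$, whose Riemannian dual is precisely $K/H = N$. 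Smoothness of $\Phi$ and its inverse then follows from real-analyticity of $\exp$; a quick computation at the origin shows that $d\Phi_{[e,0]}$ sends $(Y, X) \in \mathfrak{l} \oplus \mathfrak{l}$ to $Y + iX \in \mathfrak{l} \oplus i\mathfrak{l} \cong \mathfrak{k}^\C/\mathfrak{h}^\C$, confirming it is an $\R$-linear isomorphism.

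For the biholomorphism, I would verify that the complex structure on $K^\C/H^\C$ (induced from $K^\C$ being a complex Lie group) pulls back via $\Phi^{-1}$ to the adapted complex structure on $TN$. By the Lempert--Sz\H{o}ke / Guillemin--Stenzel characterization, it suffices to show that for each $Y \in \mathfrak{l}$ the associated Monge--Amp\`ere leaf
\[
\C \longrightarrow TN = K \times_H \mathfrak{l}, \qquad s + it \longmapsto [\exp(sY),\, tY],
\]
is holomorphic. Its image under $\Phi$ is $\exp(sY)\exp(itY) H^\C = \exp((s+it)Y) H^\C$ (using $[sY, itY] = 0$), which is the restriction to the complex line $\C Y \subset \mathfrak{k}^\C$ of the holomorphic orbit map of the one-parameter subgroup $\exp(\C Y) \subset K^\C$, and so is manifestly holomorphic. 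Applying $K$-equivariance then propagates the conclusion off of the base fiber, so the pulled-back complex structure agrees with the adapted one on all of $TN$.

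The main obstacle is the global injectivity/surjectivity of $\Phi$: the local tangent space computation is immediate, but promoting it to a diffeomorphism relies genuinely on the Mostow decomposition for the complex symmetric space $K^\C/H^\C$, which I would invoke rather than reprove. The biholomorphism check, once $\Phi$ is in hand, reduces essentially to the tautology that holomorphic one-parameter subgroups of $K^\C$ project to holomorphic curves in $K^\C/H^\C$.
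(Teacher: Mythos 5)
Your proposal is correct and follows essentially the same route as the paper: the same explicit map $[k,X]\mapsto k\exp(iX)H^\C$, the same verification of adaptedness by checking that geodesics generated by $X\in\mathfrak{l}$ extend to the holomorphic curves $s+it\mapsto\exp((s+it)X)H^\C$, and the same appeal to Mostow's decomposition for the global diffeomorphism. The only caution is that surjectivity does not follow from naively combining the polar decompositions of $K^\C$ and $H^\C$ (since $\exp(i(X_{\mathfrak{l}}+X_{\mathfrak{h}}))\neq\exp(iX_{\mathfrak{l}})\exp(iX_{\mathfrak{h}})$ in general); one needs the refined Mostow decomposition $K^\C=K\exp(i\mathfrak{l})\exp(i\mathfrak{h})$, which is exactly the statement you say you would invoke, so this is a matter of phrasing rather than a gap.
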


\begin{proof}  
Denote \( \mathfrak{k} = \mathfrak{h} \oplus \mathfrak{l} \) the Cartan decomposition with respect to the symmetric pair \( (K, H) \). We identify \( TN \cong K \times_H \mathfrak{l} \), where \( H \) acts on the product via  
\[
h \cdot (k, X) = (kh^{-1}, \mathrm{Ad}_h(X)).
\]
Now, the desired \( K \)-equivariant biholomorphism is given by  
\[
\Phi: TN \cong K \times_H \mathfrak{l} \to K^\C/H^\C, \quad (k, X) \mapsto k\exp(iX)H^\C.
\]
A complex structure on \( TN \) is called \emph{adapted} if, for any geodesic \( \gamma: \mathbb{R} \to N \), the differential  
\[
\dd\gamma: T\mathbb{R} = \mathbb{C} \to TN, \quad (x, y) = x + iy \mapsto (\gamma(x), y\gamma'(x))
\]
is holomorphic. Any geodesic $\gamma:\R\to N$ (starting at the origin) is generated by an element $X\in \mathfrak{l}$, which indeed implies that $\Phi\circ \dd\gamma:\C\to N_\C$ is holomorphic, i.e.
$$\Phi(\dd\gamma(x,y))=\Phi(\exp(xX),yX)=\exp((x+iy)X)H^\C.$$
By counting dimensions, one may believe that the map is a local biholomorphism near the zero section. The global statement follows from Mostow's decomposition theorem \cite{Mos55}, which states that \( K^\C \) is equivariantly diffeomorphic to \( K \times \mathfrak{k} \cong TK \) for a compact Lie group \( K \).  
\end{proof}  
\section{Spheres as symmetric R-spaces}\label{spheres}
Spheres $\mathrm{S}^n=\lbrace x^2_1+\ldots+x_n^2=1\rbrace\subset \R^{n+1}$ are symmetric spaces, the associated symmetric pair is $(K,H)=(\mathrm{SO}(n+1), \mathrm{SO}(n))$. There is a larger Lie transformation group acting on $\mathrm{S}^n$: the group of conformal diffeomorphisms $G=\mathrm{SO}(n+1,1)$. To see how it acts, we identify the sphere as quadric in $\rp^{n+1}$:
$$
\mathrm{S}^n \to \lbrace [x] \in \rp^{n+1}\ : \ x^T S x=0\rbrace,\ \ (z,y)\mapsto(z-1, \sqrt{2} y, z+1),
$$
where
$$
S=\begin{pmatrix}
    0 & 0 & 1\\
    0 & 1_n & 0\\
    1 & 0 & 0
\end{pmatrix}.
$$
 The corresponding isometry group is $\mathrm{SO}(n+1,1)$ preserves the sphere and induces the conformal $\mathrm{SO}(n+1,1)$-action on $\mathrm{S}^n$ (cf.\ \cite[Sec.\ 2.1]{Sch08}. Note, that 
$$
C^T S C =D, \text{for}\ D=\begin{pmatrix}
    1 & 0 & 0\\
    0 & 1_n & 0 \\
    0 & 0 & -1 
\end{pmatrix}\ \text{and} \ C=\frac{1}{\sqrt{2}}\begin{pmatrix}
    1 & 0 & -1\\
    0 & 1_n & 0\\
    1 & 0 & 1
\end{pmatrix}.
$$
This means conjugation by $C$ intertwines the standard representation of $\mathrm{SO}(n+1,1)$ and the representation with respect to $S$. Next we want to determine the stabilzer of a point. Take $(z,y)=(-1,0)$, the image in $\rp^{n+1}$ is $[x]$ for $x=\begin{pmatrix}
    -2 & 0 & 0
\end{pmatrix}^T$. This means $(Cx)^T=\begin{pmatrix}
    -\sqrt{2} & 0 & -\sqrt{2}
\end{pmatrix}^T$, which is fixed by $P = \mathrm{SO}(n)\times \mathrm{SO}(1,1)$. We find that $G^\C=\mathrm{SO}(n+2,\C)$ and $P^\C=\mathrm{SO}(n,\C)\times \mathrm{SO}(2,\C)$. It follows that $G^\vee= \mathrm{SO}(n+2)$ and $K^\vee=\mathrm{SO}(n)\times \mathrm{SO}(2)$. Thus
$$
(\mathrm{S}^n)_\C\cong \mathrm{SO}(n+2,\C) / \mathrm{SO}(n,\C)\times \mathrm{SO}(2,\C)\cong \mathrm{SO}(n+2)/\mathrm{SO}(n)\times \mathrm{SO}(2)\cong Q_n(\C).
$$
This means we get two Cartan involutions of $\mathfrak{so}(n+2)$, the one associated with the symmetric pair $(\mathrm{SO}(n+2),\mathrm{SO}(n)\times\mathrm{SO}(2))$ and the one for the symmetric pair $(\mathrm{SO}(n+2), \mathrm{SO}(n+1))$ inducing the antiholomorphic involution on $Q_n(\C)$. Note that $\mathrm{S}^n$ sits as $\mathrm{SO}(n+1)$-orbit of an extrinsically symmetric element in the tangent space at any point of $\mathrm{S}^{n+1}$, the symmetric space associated to the symmetric pair $(\mathrm{SO}(n+2),\mathrm{SO}(n+1))$.

\vspace{1em}

The exact same construction can be carried out for the real quadrics of signature $(p,q)$:
$$
Q_{p,q}(\R):=\lbrace [x]\in\rp^{n+1}\ : x^TS_{p,q}x=0\rbrace,\ S_{p,q}=\frac{1}{\sqrt{2}}\begin{pmatrix}
    1 & 0 & 1\\
    0 & 1_{p,q} & 0\\
    1 & 0 & 1
\end{pmatrix},
$$
where $1_{p,q}$ is the diagonal matrix with p times 1 and q times -1. For a detailed discussion see \cite[Sec.\ 2.1]{Sch08}. We only note here that $Q_{p,q}(\R)\cong \mathrm{S}^p\times \mathrm{S}^q/\Z_2$ as symmetric space, where $\Z_2$ acts as the diagonal antipodal map.
\newpage
\section{List of symmetric R-spaces}\label{classification}
Indecomposable symmetric R-spaces are either irreducible Hermitian symmetric spaces of compact type or real forms of irreducible Hermitian symmetric spaces of compact type. Both are classified and for the readers convenience we include the lists here.
\subsection{Irreducible Hermitian symmetric spaces of compact type \cite{HG01}} For Hermitian symmetric spaces $N_\C=N\times N$, hence $\mathrm{rk}(N_\C)=2\cdot\mathrm{rk}(N)$. Further they are all simply connected $\pi_1(N)=0$.
\begin{enumerate}
    \item Complex Grassmannians: $$ N=\mathrm{Gr}_\C(p,p+q)=\mathrm{SU}(p+q)/\mathrm{SU}(p)\times \mathrm{SU}(q)$$
    \item Space of orthogonal complex structures on $\R^{2n}$: 
    $$
    N=\mathrm{SO}(2n)/\mathrm{U}(n)
    $$
    \item Space of complex structures on $\mathbb{H}^{n}$ compatible with the inner product:
    $$
    N=\mathrm{Sp}(n)/\mathrm{U}(n)
    $$
    \item Complex quadrics:
    $$
    N=Q_n(\C)=\mathrm{SO}(n+2)/\mathrm{SO}(n)\times\mathrm{SO}(2)
    $$
    \item Complexification of Cayley projective plane $\mathbb{O}P^2$:
    $$
    N=(\C\otimes \mathbb{O})P^2=E_6/\mathrm{SO}(10)\times\mathrm{SO}(2)
    $$
    \item Space of symmetric submanifolds of Rosenfeld projective plane $(\mathbb{H}\otimes \mathbb{O})P^2$ isomorphic to $(\C\otimes \mathbb{O})P^2$:
    $$
    N=E_7/E_6\times \mathrm{SO}(2)
    $$
    \end{enumerate}
\newpage
\subsection{Indecomposable symmetric R-spaces of non-Hermitian type \cite{Tak84}}\label{table}
\begin{small}
\begin{center}
\begin{tabular}{|c| c c c c|} 
 \hline
  & N & $N_\C$ & $\pi_1(N)$ & $\mathrm{rk}(N_\C)/\mathrm{rk}(N)$ \\ [0.5ex] 
 \hline\hline
 (1) & $\mathrm{SO(p+q)/\mathrm{S}(\mathrm{O}(p)\times \mathrm{O}(q)})$ & $\mathrm{SU(p+q)/\mathrm{S}(\mathrm{U}(p)\times \mathrm{U}(q)})$  &  &   \\
 \hline
 (a) & $\rp^1$ & $\cp^1$ & $\Z$ & 1  \\
 p=q=1 & & & &\\
 \hline
 (b) & $\mathrm{Gr}_\R(p,p+q)$ & $\mathrm{Gr}_\C(p,p+q)$ & $\Z_2$ & 1 \\
 $q\geq p\geq 2$ & & & &\\
 \hline
 (2) & $\mathrm{Gr}_{\mathbb{H}}(p,p+q)$& $\mathrm{Gr}_{\mathbb{C}}(2p,2p+2q)$& 0 & 2 \\
 $q\geq p\geq 1$ & & & &\\
 \hline
 (3) & $\mathrm{U}(n)$ & $\mathrm{Gr}_\C(n,2n)$ & $\Z$ & 1 \\
 $n\geq 2$ & & & & \\
 \hline
 (4) & $\mathrm{SO}(n)$ & $\mathrm{SO}(2n)/\mathrm{U}(n)$ & $\Z_2$ & 1 \\  
 $n\geq 5$ & & & &\\
 \hline
 (5) & $\mathrm{U}(2n)/\mathrm{Sp}(n)$ & $\mathrm{SO}(4n)/\mathrm{U}(2n)$ & $\Z$ & 1 \\
 $n\geq 3$ & & & & \\
 \hline
 (6) & $\mathrm{Sp}(n)$ & $\mathrm{Sp}(2n)/\mathrm{U}(2n)$ & 0 & 2\\
 $n\geq 2$ & & & & \\
 \hline
 (7) & $\mathrm{U}(n)/\mathrm{O}(n)$ & $\mathrm{Sp}(n)/\mathrm{U}(n)$ & $\Z$ & 1 \\
 $n\geq 3$ & & & & \\
 \hline
 (8) & $Q_{p,q}(\R)$ & $Q_{p+q}(\C)$ & & \\
 \hline
 (a) & $\mathrm{S}^n$ &  & 0 & 2 \\
 $p=0,q\geq 2$ & & & & \\
 \hline
 (b)  & $\mathrm{S}^1\times \mathrm{S}^q/\Z_2$ & & $\Z$ & 1\\
 $p=1,q\geq 2$ & & & &\\
 \hline
 (c)  & $\mathrm{S}^p\times \mathrm{S}^q/\Z_2$ & & $\Z_2$ & 1\\
 $p\geq q\geq 2$ & & & &\\
 \hline
 (9) & $\mathrm{Gr}_{\mathbb{H}}(2,4)/\Z_2$ & $E_6/U(1)\times\mathrm{Spin}(10)$ & $\Z_2$ & 1\\
 \hline
 (10) & $\mathbb{O}\mathrm{P}^2$ & $E_6/U(1)\times\mathrm{Spin}(10)$ & 0 & 2\\
 \hline
 (11) & $\mathrm{SU}(8)/\mathrm{Sp}(4)\cdot \Z_2$ & $E_7/U(1)\times E_6 $ & $\Z_2$ & 1\\
 \hline
 (12) & $U(1)\times E_6/ F_4$ & $E_7/U(1)\times E_6 $& $\Z$ & 1\\
 \hline
\end{tabular}
\end{center}
\end{small}
\section{Homology of Hermitian symmetric spaces}\label{agenerator}
The following lemma is considered general knowledge and often stated in the literature (cf.\ \cite[Ch.\ 5 sec.\ 16]{BH58}), but we could not find a proof, so we present one here.
\begin{lemma}\label{generator}
    Let $N\cong K/H$ be an irreducible Hermitian symmetric space of compact type, then
    $$
    H_2(N,\Z)\cong \Z.
    $$
\end{lemma}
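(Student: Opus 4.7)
The plan is to combine the homotopy long exact sequence of the principal bundle $H \to K \to N$ with the Hurewicz theorem, reducing the problem to identifying $\pi_1(H)$.

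Since every Hermitian symmetric space of compact type is simply connected (cf.\ Appendix~\ref{classification}), and since replacing $K$ by its universal cover does not alter $N = K/H$ (it only replaces $H$ by a connected cover sharing the Lie algebra $\mathfrak{h}$), I may assume $K$ is simply connected. Bott's classical theorem asserts $\pi_2(K) = 0$ for any compact Lie group. The homotopy long exact sequence then reads
$$0 = \pi_2(K) \longrightarrow \pi_2(N) \longrightarrow \pi_1(H) \longrightarrow \pi_1(K) = 0,$$
so that $\pi_2(N) \cong \pi_1(H)$. Simple-connectedness of $N$ together with the Hurewicz theorem yields
$$H_2(N,\Z) \;\cong\; \pi_2(N) \;\cong\; \pi_1(H).$$

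To identify $\pi_1(H) \cong \Z$, I use the defining feature of irreducible Hermitian symmetric spaces: $\dim Z(\mathfrak{h}) = 1$, spanned by the distinguished element $Z$ inducing the complex structure, so that $\mathfrak{h} = \mathfrak{h}_{ss} \oplus \R Z$. Thus $H = Z_K(\exp(\R Z))$ is the centralizer of a one-dimensional torus in a simply connected compact group, and is therefore connected and of the form $H = \exp(\R Z) \cdot [H,H]$ with finite intersection of the two factors. A classical structural fact of Borel--de Siebenthal type -- visible case by case on the short list of Hermitian symmetric spaces (e.g.\ $[H,H] = \mathrm{SU}(n) \subset \mathrm{U}(n) = H$ for $\mathbb{C}P^n$; $[H,H] = \mathrm{SU}(p)\times\mathrm{SU}(q)$ for complex Grassmannians; $[H,H] = \mathrm{Spin}(n)$ for $Q_n(\C)$; and similarly simply connected $\mathrm{Spin}(10)$ and $E_6$ for the exceptional cases) -- asserts that $[H,H]$ is itself simply connected. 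Granting this, the universal cover of $H$ is $\R \times [H,H]$, and compactness of $H$ forces the covering kernel to meet the $\R$-factor in a rank-one lattice while intersecting $[H,H]$ trivially, yielding $\pi_1(H) \cong \Z$.

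The main obstacle is therefore the verification of simple-connectedness of $[H,H]$, which rests on the classification. An alternative route entirely bypassing this is to realize $N$ as a generalized flag variety $G^{\C}/P^{\C}$: the condition $\dim Z(\mathfrak{h}) = 1$ translates into $P^{\C}$ being a \emph{maximal} parabolic (removing a single node from the Dynkin diagram of $G^{\C}$). The Bruhat decomposition then exhibits $N$ as a CW-complex whose cells (Schubert cells) are complex affine spaces indexed by minimal-length coset representatives in $W/W_P$; in particular all cells are of even real dimension, so $H_*(N,\Z)$ is free abelian and concentrated in even degrees. Maximality of $P^{\C}$ forces exactly one length-one coset, corresponding to the simple reflection for the removed node, producing a single $2$-cell and hence $H_2(N,\Z) = \Z$ directly.
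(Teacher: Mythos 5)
Your argument is correct, but it takes a genuinely different route from the paper. You reduce the statement to group theory: the homotopy exact sequence of $H\to K\to N$ (with $K$ replaced by its universal cover, which is legitimate since $\pi_1(N)=0$ guarantees the preimage of $H$ is connected) together with Hurewicz gives $H_2(N,\Z)\cong\pi_2(N)\cong\pi_1(H)$, and you then pin down $\pi_1(H)\cong\Z$ from $\dim Z(\mathfrak h)=1$ plus simple connectivity of $[H,H]$. The only place you lean on the classification is this last fact; note that it is actually a general theorem (the derived group of the centralizer of a torus in a simply connected compact group -- equivalently, of a Levi subgroup -- is simply connected, since the simple coroots of the Levi span a direct summand of the coroot lattice), so your case-by-case verification can be dispensed with and the argument becomes classification-free. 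The paper instead stays inside its symplectic framework: the Hamiltonian circle action $H_\xi$ gives a Morse--Bott decomposition with only even indices, so $H_*(N,\Z)$ is free and concentrated in even degrees, and then invariant de Rham theory plus Schur's lemma applied to the irreducible isotropy representation on $\mathfrak l$ shows $H^2(N,\R)\cong\R$, whence $H_2(N,\Z)\cong\Z$ by universal coefficients. Your alternative Bruhat/Schubert-cell argument (one $2$-cell because the parabolic is maximal) is also correct and is in fact the algebraic shadow of the paper's Morse--Bott decomposition; what your first route buys is independence from both the cell structure and the de Rham step, at the price of importing the structure theory of $H$, while the paper's proof needs no Lie-theoretic input beyond the irreducibility of the isotropy representation and reuses machinery ($H_\xi$, its critical manifolds) already set up for the capacity computations.
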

\begin{proof}
    We can construct a (Morse) chain complex with non vanishing chain groups only in even degrees as follows. The Hamiltonian $H_\xi: N\to \R$ from Section \ref{Hamcircle} is Morse-Bott with only even indices, as it generates a Hamiltonian $\mathrm{S}^1$-action \cite[Lem.\ 5.5.8]{DS17}. The critical submanifolds are finite unions of totally geodesic complex submanifolds, hence Hermitian symmetric spaces themselves. In particular they admit Morse--Bott Hamiltonians just like $H_\xi$, decomposing $N$ recursively only using even indices. The resulting complex has non trivial chain groups only in even degrees, thus all differentials vanish. In particular the chain groups coincide with the homology groups and therefor no torsion occurs. We will now compute the de Rham cohomology group $H^2_{dR}(N,\R)\cong H^2(N,\R)\cong H_2(N,\R)\cong \R$. As there is no torsion, universal coefficient theorem implies that $H_2(N,\Z)\cong \Z$.\\
    
    Every de Rham cohomology class can be represented by a $K$-invariant form, averaging if necessary.
    We need to show that there is up to scalar multiple only one invariant closed 2-form. Let $\nu\in \Omega_2(N)$ be any $K$-invariant 2-form. Note that $K$-invariance implies, that $\nu$ is fully determined by its value at the origin $o\in N$. Define an $H$-invariant symmetric operator $A:\mathfrak{l}\to\mathfrak{l}$ implicitly via
    $
    \nu_o(A\cdot,\cdot)=(\omega_{KKS})_o(\cdot,\cdot),
    $
    where $\mathfrak{k}=\mathfrak{h}\oplus\mathfrak{l}$ is the Cartan decomposition associated to the symmetric pair $(K,H)$. Then, $A$ must be a multiple of the identity, because the representation of $H$ on $\mathfrak{l}$ is irreducible and any eigenspace defines an invariant subspace. 
\end{proof}

\thispagestyle{empty}


\bibliography{literatur}
\bibliographystyle{alpha}
\end{document}